\let\oldsubsection\subsection
\renewcommand{\subsection}{\@startsection{subsection}{2}{\z@}%
  {-2.5ex\@plus -1ex \@minus -.2ex}%
  {0.5ex\@plus .2ex}%
  {\normalfont\bfseries}}
\theoremstyle{definition}
\newtheorem{theorem}{Theorem}
\newtheorem*{theorem*}{Theorem}
\numberwithin{theorem}{section}
\newtheorem{defn}[theorem]{Definition}
\newtheorem*{defn*}{Definition}
\newtheorem{prop}[theorem]{Proposition}
\newtheorem{lemma}[theorem]{Lemma}
\newtheorem{remark}[theorem]{Remark}
\newtheorem{example}[theorem]{Example}
\newtheorem{cor}[theorem]{Corollary}
\setlist{leftmargin=2em}
\DeclareMathOperator{\sgn}{sgn}
\DeclareMathOperator{\grad}{grad}
\DeclareMathOperator{\grdim}{grdim}
\DeclareMathOperator{\Gr}{Gr}
\DeclareMathOperator{\init}{in}
\DeclareMathOperator{\Spec}{Spec}
\DeclareMathOperator{\Trop}{Trop}
\DeclareMathOperator{\Grob}{Gr\ddot{o}b}
\DeclareMathOperator{\Path}{path}
\DeclareMathOperator{\vpath}{vpath}
\newcommand{\la}{\lambda}
\newcommand{\om}{\omega}
\newcommand{\cM}{\mathcal M}
\newcommand{\cR}{\mathcal R}
\newcommand{\cA}{\mathcal A}
\newcommand{\cT}{\mathcal T}
\newcommand{\cK}{\mathcal K}
\newcommand{\cW}{\mathcal W}
\newcommand{\bC}{\mathbb{C}}
\newcommand{\bP}{\mathbb{P}}
\newcommand{\bR}{\mathbb{R}}
\newcommand{\bZ}{\mathbb{Z}}
\newcommand{\bM}{\mathbb{M}}
\newcommand{\rA}{\mathrm A}
\newcommand{\rC}{\mathrm C}
\newcommand{\al}{\langle}
\newcommand{\ar}{\rangle}
\newcommand{\bs}{\backslash}
\newcommand{\wt}{\widetilde}
\newcommand{\wh}{\widehat}
\newcommand{\mr}{\mathring}
\newcommand{\ol}{\overline}
\newcommand{\ph}{\varphi}
\newcommand{\mon}{\mathrm{mon}}
\newcommand{\tor}{\mathrm{tor}}
\newcommand{\pp}{{{+}{+}}}
\newcommand{\dotprec}{\mathrel{\dot\prec}}
\newcommand{\vv}{\mathbf v}
\DeclareFontFamily{U}{mathx}{\hyphenchar\font45}
\DeclareFontShape{U}{mathx}{m}{n}{
      <5> <6> <7> <8> <9> <10>
      <10.95> <12> <14.4> <17.28> <20.74> <24.88>
      mathx10
      }{}
\DeclareSymbolFont{mathx}{U}{mathx}{m}{n}
\DeclareMathSymbol{\bigtimes}{1}{mathx}{"91}
\DeclareMathSymbol{\lsb@l}{\mathalpha}{letters}{`l}
\mathchardef\newbracket=\mathcode`)
\mathchardef\newcomma=\mathcode`,
\title{Tropical cluster varieties of type C}
\author{Igor Makhlin}
\address{Technische Universität Berlin, Fakultät II Mathematik und Naturwissenschaften, Institut für Mathematik, FG Diskrete Mathematik/Geometrie}
\email{iymakhlin@gmail.com}
\begin{document}

\maketitle

\begin{abstract}
We explicitly describe the tropicalization of a cluster variety of finite type C, realizing it as the space of axially symmetric phylogenetic trees. We also find all occurring sign patterns of coordinates, for both the cluster variety and the cluster configuration space. We show that each of the corresponding signed tropicalizations is, combinatorially, dual to either a cyclohedron or an associahedron. As additional results, we construct Gröbner and tropical bases for the defining ideals of both varieties, and classify the arising toric degenerations. 
\end{abstract}

\section*{Introduction}

Connections between the theory of cluster algebras and tropical geometry were established already in the foundational series of papers by Fomin, Zelevinsky and Berenstein:~\cite{FomZel2002,FomZel2003,BerFomZel2005,FomZel2007}. Subsequently, these connections were investigated by many authors, giving rise to at least two extensively studied notions of tropicalization for cluster algebras and cluster varieties. \textit{Positive tropicalizations} were defined by Speyer and Williams in~\cite{SpeWil2005}, where the authors formulated a long-lasting conjecture concerning positive tropicalizations of finite type cluster varieties, only resolved in~\cite{AHHL2021,JaLoSt2021}. The related \textit{Fock--Goncharov tropicalizations} are due to several works of Fock and Goncharov including~\cite{FocGon2006,FocGon2009}, as well as the paper~\cite{GHKK2018} by Gross, Hacking, Keel and Kontsevich. This construction plays a central role in the influential Fock--Goncharov conjectures, see~\cite{GHKK2018}. Further work studying these two notions of tropicalized cluster varieties or similar structures includes~\cite{Qin2017,RieWil2019,SheWen2020,BosMohNCh2021,Man2021,DFGK2021,Bos2022}. 

However, much less is known about \textit{tropicalizations} of cluster varieties as commonly understood in tropical algebraic geometry today, a concept originating in~\cite{BieGro1984,Stu2002,SpeStu2004,Mikh2005,EinKapLin2006}. This object can be thought of as containing those mentioned above; it is variously defined in terms of coordinatewise valuation maps, limits of amoebas or as a subfan of the Gröbner fan. In~\cite{SpeStu2004}, Speyer and Sturmfels gave a remarkable explicit description of the tropicalization of the Grassmannian $\Gr(2,n)$, realizing it as the space of phylogenetic trees introduced in~\cite{RobWhi1996,BilHolVog2001}. This offers a direct connection to cluster algebra theory since the affine cone over $\Gr(2,n)$ arises as a cluster variety: it is the spectrum of a cluster algebra of finite type A, the prototypical and best understood instance of a cluster algebra. To the author's knowledge, this remains the only infinite family of cluster varieties with explicitly described tropicalizations. The goal of this paper is to obtain another such family by describing the tropicalizations of cluster varieties of finite type C.

An important role in our construction is played by the results of~\cite{CoxMakh2025}, where Cox and the author constructed a polyhedral fan termed the \textit{space of axially symmetric phylogenetic trees}. It was conjectured that the defined fan arises as the tropicalization of the \textit{binary geometry} or \textit{cluster configuration space} associated with a type C cluster algebra, see~\cite{AHHLT2023,AHHL2021}. In this paper, our approach is to work directly with the type C cluster variety\footnote{
Following one of the common conventions, we define the cluster variety as the affine spectrum of the cluster algebra. This terminology is used in the highly relevant source~\cite{AHHL2021} and by a variety of other authors, e.g.,~\cite{Mul2012,LamSpe2022,CouDuc2020}. The resulting affine scheme is, in general, different from the quasi-affine union of cluster tori, known as the \textit{cluster $\cA$-variety} or the \textit{cluster manifold}.
}, specifically, the variety described by Fomin and Zelevinsky in~\cite{FomZel2003}. The latter corresponds to a particularly natural and well-behaved choice of coefficients for the cluster algebra, which associates cluster variables to diagonals of a regular $2n$-gon and frozen variables to its sides. The resulting variety can be viewed as the type C counterpart of the affine cone over $\Gr(2,n)$. As our main result, we realize its tropicalization as a variation of the fan defined in~\cite{CoxMakh2025}. This is subsequently applied to cluster configuration spaces, resolving the conjectures stated in the latter paper. In turn, these results on cluster configuration spaces allow us to extend our description of tropicalized cluster varieties to all full rank geometric type cluster algebras of type C.

To present our results in more detail, we introduce some notation. For $n\ge 3$, set $N=\{1,\dots,n,\ol 1,\dots,\ol n\}$. The studied cluster algebra $\cA$ of type $\rC_{n-1}$ is generated by its set of cluster and frozen variables $\{\Delta_{a,b}\}_{(a,b)\in D}$, where $D$ is a certain subset of $N^2$. In particular, we have a surjection $\bC[x_{a,b}]_{(a,b)\in D}\to \cA$ with kernel $I$. The respective cluster variety is $X=\Spec\cA$ and we study  the \textit{tropical cluster variety} $\Trop X=\Trop I$, a polyhedral fan in $\bR^D$. 

An \textit{axially symmetric phylogenetic tree} (or \textit{ASPT}) is a tree with $2n$ leaves labeled by $N$ and no vertices of degree 2, that satifies a natural axial-symmetry condition. With every ASPT we associate a cone in $\bR^D$, together these cones form a polyhedral fan: the \textit{space of ASPTs}. 

Although the definition of the type C cluster algebra inherently relies on central symmetry, its tropicalization is instead described in terms of axial symmetry. Namely, the majority of this paper is dedicated to a proof of the following.

\begin{theorem}[Theorem~\ref{thm:main}]\label{thm:A}
The tropical cluster variety $\Trop X$ is the space of ASPTs.
\end{theorem}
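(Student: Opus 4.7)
The plan is to adapt the strategy of Speyer and Sturmfels for $\Trop\Gr(2,n)$ to the type C setting, with the axial symmetry constraint replacing the unrestricted four-point condition that characterizes ordinary phylogenetic trees. The argument will split into two set-theoretic containments: every weight vector in a cone of the ASPT fan lies in $\Trop X = \Trop I$, and conversely every point of $\Trop I$ is realized by some ASPT. The set-theoretic equality, combined with a Gröbner-theoretic analysis of the maximal cones (indexed by centrally symmetric triangulations of the $2n$-gon, i.e., the cluster seeds of type $\rC_{n-1}$), should yield equality as polyhedral fans.

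For the first containment, I will lift each ASPT $T$ to a family of points of $X$ parametrized by Puiseux series in a variable $t$. Given a weight vector $w$ in the relative interior of the cone of $T$, interpret it as an assignment of edge lengths to $T$, and define $\Delta_{a,b}(t)$ as a Plücker-like expression whose leading monomial in $t$ encodes the sum of edge lengths along the appropriate path in $T$ between leaves $a$ and $b$. Because $T$ is axially symmetric, these series should satisfy not only the three-term Plücker relations but also the extra type C relations, so their image lies in $X$; taking coordinatewise valuations then recovers $w\in\Trop X$.

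For the reverse containment, the principal tool is an explicit tropical basis $\mathcal B\subseteq I$, constructed as one of the stated additional results of the paper. A weight $w\in\Trop I$ must satisfy that $\init_w(f)$ is not a monomial for every $f\in\mathcal B$. The plan is to read these tropicalized relations as simultaneously imposing the classical four-point condition on the quantities $w_{a,b}$ and a symmetry compatibility between $w_{a,b}$ and $w_{\ol a,\ol b}$. The four-point condition, via the Buneman--Speyer--Sturmfels theorem, produces a tree $T_w$ on the labels $N$; the extra relations coming from central symmetry of the $2n$-gon will then force $T_w$ to be axially symmetric, hence an ASPT whose cone contains $w$.

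The main obstacle is expected to be the identification of $\mathcal B$ and the verification of the tropical basis property: the type C ideal requires relations beyond three-term Plücker incidences, and any naive candidate would be unwieldy. The plan is to choose a term order compatible with centrally symmetric triangulations and to compute a reduced Gröbner basis; the maximal cones of $\Trop X$ should then correspond bijectively to such triangulations, with $\init_w(I)$ along each maximal cone equal to the Stanley--Reisner ideal of the corresponding cluster seed. This single computation should simultaneously deliver the tropical basis, the maximal-cone classification, and the toric degeneration results announced in the abstract.
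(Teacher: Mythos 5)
Your high-level split (forward containment via explicit families, reverse containment via a tropical basis and a tree-reconstruction argument) matches the paper's, but several of your key structural assumptions are false and would derail the argument. First, the maximal cones of $\Trop X$ are \emph{not} in bijection with centrally symmetric triangulations of the $2n$-gon: Proposition~\ref{prop:ASPTtypes} shows that maximal axially symmetric subdivisions come in two flavors, type (I) (triangulations containing the axis $\delta_0$, which do correspond to cluster seeds) and type (II) (subdivisions with trapezoids perpendicular to the axis), and the type (II) cones are genuine maximal cones of $\Trop X$. Relatedly, the initial ideals along maximal cones are \emph{not} in general toric or Stanley--Reisner — Theorem~\ref{thm:toricdegens} shows $\init_w I$ is toric precisely when the ASPT has no vertex of degree $>3$; for the other maximal ASPTs the initial ideal is not even prime. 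The picture you describe — maximal cones indexed by seeds, initial ideals equal to Stanley--Reisner ideals of the cluster complex — is the combinatorics of the \emph{positive} tropicalization $\Trop_{>0} X$ (dual to the cyclohedron), not of $\Trop X$ itself. Conflating the two is the central error.

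Second, the reverse containment is materially harder than you assume. You claim that the four-point condition plus the symmetry $w_{a,b}=w_{\ol a,\ol b}$ (which the Plücker relations do enforce) will ``force $T_w$ to be axially symmetric.'' This is exactly what fails: Figure~\ref{fig:nonASPT} gives a weighted tree that is centrally symmetric as a distance function (so all $\init_w r_{a,b,c,d}$ are binomials) yet is not an ASPT. The quadratic Plücker relations therefore do \emph{not} form a tropical basis for $I$, in contrast to $\Gr(2,n)$, and the cubic elements $s_{i,j,k}$ of \eqref{eq:sijk} must be added. Correspondingly, the reconstruction of axial symmetry from distance data requires the extra inequality condition (ii) of Lemma~\ref{lem:keylemma}, whose proof is a lengthy induction on the label set and is the technical heart of the paper. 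Your plan of ``compute a reduced Gröbner basis for a term order compatible with triangulations'' cannot produce this: a Gröbner basis with respect to a single term order is generally not a tropical basis, and the additional relations you need are precisely those that detect bad initial forms at weights outside $|\Trop I|$. A further small slip: the ideal $I$ \emph{is} generated by the images of the three-term Plücker relations (Proposition~\ref{prop:Igens}); there are no ``extra type C relations'' among the generators — the extra relations are needed only for the tropical basis.
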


We now explain the two main obstacles encountered in that regard. Theorem~\ref{thm:A} states that for $w\in\bR^D$, the initial ideal $\init_w I$ is monomial-free if and only if $w$ lies in the space of ASPTs. In the $\Gr(2,n)$ case treated in~\cite{SpeStu2004}, the ``if'' part of a similar equivalence holds because, for every maximal cone, the respective initial ideal can be explicitly realized as a toric ideal. The ``only if'' part is verified for $\Gr(2,n)$ by checking that, for weights outside of the tropicalization, at least one of the quadratic Plücker relations has a monomial initial form, i.e., these relations form a tropical basis. Unfortunately, neither of these two favorable circumstances carries over to type C: the initial ideals do not seem to admit a simple explicit description, and the quadratic generators do not form a tropical basis.

Instead, our approach is as follows. For the first implication, we construct a toric ideal that \textit{contains} the initial ideal corresponding to a maximal cone, see Lemma~\ref{lem:initinker}. In general, the initial ideal itself will not be toric or even prime; the toric degenerations are classified separately by Theorem~\ref{thm:toricdegens}. For the converse implication, we define certain cubic relations, see~\eqref{eq:sijk}, which need to be included in the tropical basis of $I$. The tropical basis claim then follows from an axial symmetry criterion for phylogenetic trees, given by Lemma~\ref{lem:keylemma}. Both of the mentioned Lemmas~\ref{lem:keylemma} and~\ref{lem:initinker} are proved via heavy combinatorial analysis of phylogenetic trees. These two proofs are postponed until the end of the paper and constitute its most difficult part.

After Theorem~\ref{thm:A} is established, we turn to the cluster configuration space $\cM$, which is a certain GIT quotient of the very affine part of $X$. We deduce from Theorem~\ref{thm:A} that $\Trop\cM$ is the lineality-space quotient of the space of ASPTs. We then show that the tropicalized cluster variety given by any full rank geometric type cluster algebra of type $\rC_{n-1}$ is, modulo lineality, also linearly equivalent to the space of ASPTs, see~Corollary~\ref{cor:fullrank}. 

Finally, with every sign pattern of coordinates that occurs in the real part of a variety, one associates the respective \textit{signed tropicalization}: a certain subfan of the tropicalization. The case of all coordinates positive corresponds to the well-studied \textit{positive tropicalization}~\cite{SpeWil2005}, while other signed tropicalizations serve as a natural generalization. We study sign patterns of coordinates occurring in the real parts of $\cM$ and $X$, as well as the signed tropicalizations corresponding to these sign patterns. We confirm another conjecture of~\cite{CoxMakh2025}, as well as a conjecture made in~\cite{AHHL2021} regarding the number of occurring sign patterns. Results concerning the cluster configuration space $\cM$ are summarized as follows, where \textit{dihedral orderings} of a set are its cyclic orderings considered modulo reversal.

\begin{theorem}[Theorems~\ref{thm:signpatterns} and~\ref{thm:signedtrops}]
There are $2^{n-2}(n+1)(n-1)!$ sign patterns that occur in $\cM$, each defining a single connected component of $\cM(\bR)$. These sign patterns are enumerated by those dihedral orderings of $N$ which are either centrally symmetric or axially symmetric. For the former dihedral orderings, the respective signed tropicalization is combinatorially equivalent to the dual fan of a cyclohedron, while for the latter it is combinatorially equivalent to the dual fan of an associahedron.
\end{theorem}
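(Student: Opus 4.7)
The theorem combines Theorem~\ref{thm:signpatterns} (enumeration and connected components) with Theorem~\ref{thm:signedtrops} (polytopal description); my strategy is to reduce both to the combinatorics of ASPTs and dihedral orderings of $N$ via Theorem~\ref{thm:A}.

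For Theorem~\ref{thm:signpatterns}, I would start by observing that a sign pattern $\epsilon\in\{\pm 1\}^D$ occurs on $\cM(\bR)$ iff its signed tropicalization $\Trop^\epsilon\cM$ is nonempty. By Theorem~\ref{thm:A} (passed through the lineality quotient to $\cM$), $\Trop^\epsilon\cM$ is the subfan of the space of ASPTs consisting of weights $w$ for which $\init_w I$ has a real zero in the $\epsilon$-orthant. Combining the Pl\"ucker-type quadratic relations with the cubic tropical-basis relations~\eqref{eq:sijk}, I would show that every achievable $\epsilon$ is encoded by a cyclic placement $\pi$ of the labels $N$ around a circle, with $\epsilon_{a,b}$ determined by the position of the chord $(a,b)$ in that placement, exactly as in the type-A analysis of~\cite{SpeStu2004}. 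The type-C identification $\Delta_{a,b}=\Delta_{\ol b,\ol a}$ then forces $\pi$ to admit a symmetry realizing the bar involution on $N$: either the half-rotation of the $2n$-cycle (centrally symmetric $\pi$) or an edge-axis reflection (axially symmetric $\pi$). This compatibility step is the principal obstacle.

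For the count, centrally symmetric labeled orderings of $N$ (those satisfying $y_{k+n}=\ol{y_k}$) number $2^n n!$, yielding $2^n n!/(4n)=2^{n-2}(n-1)!$ dihedral classes after quotienting by $D_{2n}$. For each of the $n$ edge axes there are also $2^n n!$ axially symmetric labeled orderings; no labeled ordering has more than one such axis, since the composition of two would be a nontrivial rotation fixing the ordering. Hence axially symmetric dihedral classes number $n\cdot 2^n n!/(4n)=2^{n-2}n!$. A dihedral ordering cannot be simultaneously centrally and axially symmetric, since combining $y_{k+n}=\ol{y_k}$ with $y_{s(k)}=\ol{y_k}$ would force the reflection $s$ to equal the rotation $r^n$. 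Summing yields $2^{n-2}(n+1)(n-1)!$. Each $\epsilon$-stratum of $\cM(\bR)$ is connected because the cluster chart adapted to the associated $\pi$ parametrizes it by a product of positive reals.

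For Theorem~\ref{thm:signedtrops}, fix $\pi$ compatible with a sign pattern $\epsilon$. By the compatibility analysis above, the ASPTs $T$ contributing cones to $\Trop^\epsilon\cM$ are exactly those drawable in the plane with leaves in cyclic order $\pi$ and noncrossing internal edges. If $\pi$ is centrally symmetric, such trees are dual to centrally symmetric partial triangulations of the convex $2n$-gon labeled by $\pi$, whose face poset is that of the cyclohedron (Bott--Taubes, Simion). If $\pi$ is axially symmetric with axis $s$, then $s$ preserves both $\pi$ and the bar involution, so every $\pi$-compatible ASPT is forced to be $s$-symmetric and is determined by its half on one side of $s$; such halves correspond to noncrossing partial triangulations of an $(n+1)$-gon, yielding the associahedron. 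Once the compatibility step is established, both polytopal identifications follow from standard facts about noncrossing complexes.
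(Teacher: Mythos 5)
Your proof plan is appreciably different from the paper's, and the difference exposes real gaps.

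For Theorem~\ref{thm:signpatterns}, you open with the assertion that a sign pattern occurs on $\cM(\bR)$ iff its signed tropicalization is nonempty. This ``iff'' is not available: the paper explicitly notes that nonemptiness of $X_{>0}$ implies nonemptiness of $\Trop_{>0}I$, but ``the converse is not true in general.'' You cannot therefore reduce the sign-pattern enumeration to a tropical statement at the outset. The paper's route is in a sense orthogonal: it \emph{constructs} $2^{n-2}(n+1)(n-1)!$ pairwise distinct occurring sign patterns directly, by plugging tuples $(\alpha_a)_{a\in N}$ of points in $\bR\bP^1$ into the two explicit embeddings $\iota$ and $\iota'$ of Propositions~\ref{prop:AinS1} and~\ref{prop:AinS2} and reading off cross-ratio signs (Propositions~\ref{prop:CSpatterns} and~\ref{prop:ASpatterns}); it then caps the count from above by quoting the exact count of connected components of $\cM(\bR)$ from \cite[Proposition~7.6]{AHHL2021}, and concludes by pigeonhole that equality holds and that each sign pattern fills exactly one component. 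You instead try to argue (a) that every occurring sign pattern is cut out by a cyclic placement with a bar-involution symmetry, and (b) that each sign-pattern stratum is connected because a ``cluster chart adapted to $\pi$ parametrizes it by positive reals.'' You candidly flag (a) as ``the principal obstacle'' without overcoming it — yet it is precisely the load-bearing claim — and (b) is asserted without a mechanism: a single cluster torus does not obviously surject onto a sign-pattern region, and in fact the paper never needs to prove connectedness directly because it inherits it from the known component count. Your counting of CSDOs and ASDOs and the argument that no dihedral ordering is both is correct, but it doesn't rescue the missing steps.

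For Theorem~\ref{thm:signedtrops}, your description of which cones contribute is the right target, but the paper's proof of this is again not a formal consequence of ``compatibility'' bookkeeping: it relies on a $W$-equivariance reduction to the two base cases $\la_0$ and $\mu_0$, on the explicit monomial map $\psi_{T,\vv}$ of Lemma~\ref{lem:initinker} and the coefficients $c_{T,\vv}(a,b)\in\{1,2,\mathrm i\}$ to exhibit a real zero with the prescribed signs, and on the $u$-equation calculus~\eqref{eq:extendedueq} to show incompatible trees are excluded. None of that infrastructure appears in your sketch. As written, the proposal identifies the correct combinatorial answer but does not establish it; you would need to prove the compatibility step and supply a substitute for the appeal to \cite{AHHL2021} (or reproduce that appeal).
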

A version of this theorem for the cluster variety $X$ is provided by Corollary~\ref{cor:Xsigns}.

\oldsubsection*{Further questions}

To conclude the introduction, we sketch several questions arising naturally from our results. These questions are motivated by the rich properties of the space of phylogenetic trees $\Trop\Gr(2,n)$ and broadly fit into the problem of extending these properties to type C. Since the details are beyond the scope of this paper, the interested reader is referred to the cited sources for all relevant definitions and results.
\begin{itemize}
\item Is there a polytope $P$ such that $\Trop X$ is a subfan of the secondary fan of $P$ and every ASPT is the dual graph to the respective subdivision of $P$? If so, is there a relationship to valuated matroids? In type A, one considers the hypersimplex $\Delta_{2,n}$ and the Dressian subfan which parametrizes matroid subdivisions, see~\cite{Kap1993}.
\item The type A counterpart of $\cM$ is the moduli space $\cM_{0,n}$, which admits the Deligne--Mumford compactification $\ol\cM_{0,n}$. One way of obtaining the latter is via \textit{tropical compactification}: as the closure of $\cM_{0,n}$ inside the toric variety defined by the space of phylogenetic trees, see~\cite{Tev2007}. Does a similar construction exist in type C and provide a compactification $\ol{\cM}$ that is stratified by the face poset of $\Trop X$?
\item How does one interpret $\Trop X$ as a tropical moduli space? For example, every point of $\Trop X$ is a weighted phylogenetic tree and hence defines a tropical plane by~\cite[\S~6]{SpeStu2004}. What is special about tropical planes of this form?
\item Another role of the space of phylogenetic trees in matroid theory is as the Bergman fan of a complete graph, see~\cite{ArdKli2006}. What is a similar realization of the space of ASPTs?
\end{itemize}

\oldsubsection*{Acknowledgements} The author is thankful to Lara Bossinger, Shelby Cox, Nathan Ilten, Thomas Lam and Matt Larson for valuable discussions, and to the anonymous referees for their helpful comments. This work was partially supported by the SFB-TRR 195 ``Symbolic Tools in Mathematics and their Application''.

\section{Preliminaries: initial ideals and tropicalizations}\label{sec:prelim}

We start by recalling standard definitions and results from Gr\"obner theory and tropical geometry. We refer to~\cite[Chapters 2--3]{MacStu2015} for a comprehensive treatment of these topics.

For a finite set $D$, consider the polynomial ring $R = \bC[x_d]_{d\in D}$. A real weight $w \in \bR^D$ can be viewed as an $\bR$-grading on $R$ that takes the value $w_d$ on $x_d$. The \textit{initial form} $\init_w r$ of a polynomial $r\in R$ is its highest nonzero $w$-homogeneous component, i.e., that of maximal grading. For an ideal $I \subset R$, its \textit{initial ideal} $\init_w I$ is the ideal spanned by all initial forms $\init_w r$ with $r\in I$. We also recall that a \textit{Gröbner basis} of $I$ with respect to $w$ is a generating set $G$ of $I$ such that $\{\init_w r\}_{r\in G}$ generates $\init_w I$.

In algebraic geometry, initial ideals play an important role as a standard tool for obtaining flat degenerations. In the above setting, there exists a flat family of schemes over $\bC$ with its fiber over zero isomorphic to $\Spec(R/\init_w I)$ and all other fibers isomorphic to $\Spec(R/I)$. One says that the former scheme is a \textit{Gröbner degeneration} or an \textit{initial degeneration} of the latter.

When the ideal $I$ is homogeneous (with respect to total degree), this construction is particularly well-behaved, preserving many important properties, including Krull dimension and graded dimension: 
\begin{equation}\label{eq:presdims}
\dim(R/I)=\dim(R/\init_w I),\quad\grdim I=\grdim(\init_w I).    
\end{equation} 
Here, $\grdim I\in(\bZ_{\ge0})^{\bZ_{\ge0}}$ is the sequence whose $i$th element is equal to the dimension of the subspace of degree $i$ homogeneous polynomials in $I$. We also write $\grdim I\le\grdim J$ to denote that $(\grdim I)_i\le(\grdim J)_i$ for all $i$.

For any weight $w\in\bR^D$, the set of weights $w'$ such that $\init_{w'} I=\init_w I$ is a cone in $\bR^D$. Such cones are known as \textit{Gr\"obner cones} of $I$; in general, they need not be convex. However, the Gr\"obner cones of a homogeneous ideal $I$ are relatively open convex polyhedral cones that form a complete fan in $\bR^D$. This is the \textit{Gr\"obner fan} of $I$, denoted by $\Grob I$, see~\cite{MorRob1988} (we view fans as collections of relatively open cones).

Next, if the ideal $\init_w I$ is monomial-free, then the Gr\"obner cone of $I$ that contains $w$ is called a \textit{tropical cone} of $I$. For any $I$, all tropical cones are relatively open, convex and polyhedral, and also form a polyhedral fan.
\begin{defn}[\cite{SpeStu2004}]
For an ideal $I\subset R$, the polyhedral fan formed by all tropical cones of $I$ is the \textit{tropicalization} of $I$, denoted by $\Trop I$.
\end{defn}

In particular, the support $|\Trop I|$ consists of all $w$ such that $\init_w I$ is monomial-free. When $I$ is the defining ideal of an affine scheme $X$, the tropicalization $\Trop I$ is also often denoted by $\Trop X$. This is, however, a slight abuse of notation since $\Trop I$ is not determined by $\Spec(R/I)$.
Nonetheless, whenever $\Trop I$ is nonempty, it has dimension $\dim(R/I)=\dim\Spec(R/I)$, it also has pure dimension if $I$ is prime. 

We will also use the notion of tropical bases.

\begin{defn}
A \textit{tropical basis} of $I$ is a generating set $T$ of $I$ such that, for any $w\in\bR^D$, one has $w\in|\Trop I|$ if and only if $\init_w r$ is not a monomial for every $r\in T$.    
\end{defn}

Next, the positive tropicalization arises as a subfan of the tropicalization. Let $\bR_{>0}[x_d]_{d\in D}$ denote the semiring of nonzero polynomials with no negative coefficients. 
\begin{defn}[\cite{SpeWil2005}]
For an ideal $I\subset R$, its \textit{positive tropicalization} $\Trop_{>0} I$ is the subfan of $\Trop I$ supported on those $w\in|\Trop I|$ for which $\init_w I\cap\bR_{>0}[x_d]_{d\in D}=\varnothing$.
\end{defn}

In other words, $w\in|\Trop_{> 0} I|$ if and only if every element of $\init_w I$ has both positive and negative coefficients. Let $X \subset \bC^D$ be the zero set of $I$ and let $X_{>0} = X \cap \bR_{>0}^D$ be its totally positive part. Clearly, if $X_{>0}$ is nonempty, then $\Trop_{> 0} I$ is also nonempty. The converse is not true in general.

A natural generalization of positive tropicalization is given by signed tropicalizations. Below, a \textit{sign pattern} is a vector with coordinates in $\{1,-1\}$.
\begin{defn}\label{def:signedtrop}
For a sign pattern $\nu\in\{1,-1\}^D$, consider the automorphism $\epsilon_\nu$ of $R$ taking $x_d$ to $\nu_d x_d$. The \textit{signed tropicalization} $\Trop_\nu I$ is the positive tropicalization $\Trop_{> 0} \epsilon_\nu(I)$.
\end{defn}

Note that $\Trop\epsilon_\nu(I)=\Trop I$, hence $\Trop_\nu I$ is also a subfan of $\Trop I$. Set 
\begin{equation}\label{eq:Xnu}
X_\nu=\{p\in X(\bR)=X\cap\bR^D\mid\sgn p_d=\nu_d\text{ for all }d\in D\}.    
\end{equation}
We say that $\nu$ \textit{occurs} in $X$ if $X_\nu$ is nonempty. One sees that $\Trop_\nu I$ is nonempty if the sign pattern $\nu$ occurs in $X$.

Finally, the above definitions work equally well for ideals in the ring of Laurent polynomials $\mr R=\bC[x_d^{\pm1}]_{d\in D}$. Given an ideal $\mr I\subset\mr R$, one may similarly define the initial ideal $\init_w\mr I$ with respect to $w\in\bR^D$. This leads to the notion of Gröbner cones and tropical cones of $\mr I$, which allows us to define $\Trop\mr I$ as a fan in $\bR^D$ (with $w\in|\Trop\mr I|$ if and only if $\init_w\mr I\neq \mr R$). Again, $\Trop\mr I$ is often denoted by $\Trop\mr X$, where $\mr X=\Spec(\mr R/\mr I)$ is the respective very affine scheme. The definitions of Gröbner bases, tropical bases, positive tropicalizations and signed tropicalizations are also extended straightforwardly. Note that if $I=\mr I\cap R$, then $\Trop\mr I=\Trop I$, $\Trop_{>0}\mr I=\Trop_{>0} I$ and $\Trop_\nu\mr I=\Trop_\nu I$. Furthermore, in Subsection~\ref{sec:TropM} we will consider the case of an ideal in the ring of polynomials over a ring of Laurent series (i.e., only some of the variables are invertible). This case is completely analogous to the cases of polynomials or Laurent polynomials.

\section{Type C cluster algebras}\label{sec:clusteralgs}

Throughout the paper, we work with the (complex form of the) \textit{special cluster algebra of type C} discussed by Fomin and Zelevinsky in~\cite[\S~12.3]{FomZel2003}. In Subsection~\ref{sec:TropM}, we will see that any other full rank geometric type cluster algebra of the same finite type provides the same tropicalization modulo lineality space. However, the above choice will allow for particularly natural forms of our results.

In this section, we prove several preparatory structural results about type C cluster algebras. We first obtain a presentation of the algebra in terms of generators and relations. Then we construct a monomial initial ideal of the ideal of relations; this also provides a basis in the cluster algebra different from the cluster-monomial basis. Finally, we discuss a family of automorphisms that is parametrized by the type BC Coxeter group.\footnote{
The author was not able to find any of these results in the literature despite consulting with experts. References would be highly appreciated if they do exist.}

\subsection{Generators and relations}\label{sec:genrel}

We fix an integer $n\ge 3$ and denote the cluster algebra of type $\rC_{n-1}$ by $\cA$. Rather than giving the original definition of $\cA$ in terms of mutations, we recall a realization of $\cA$ as of a subalgebra in a polynomial ring, following~\cite{FomZel2003}.

Consider the $2n$-element set $N=\{1,\dots,n,\ol 1,\dots,\ol n\}$. For $i\in[1,n]$ we use the convention $\ol{\ol i}=i$ and $|\ol i|=|i|=i$. Define the $n^2$-element set $D\subset N^2$ as 
\[D=\{(i,j)\mid 1\le i<j\le n\}\sqcup\{(i,\ol j)\mid 1\le i\le j\le n\}.\]
We note that $D$ can be interpreted as follows. The set of pairs $(a,b)\in N^2$ such that $a\neq b$ is equipped with the equivalence relation 
\[(a,b)\sim(b,a)\sim(\ol a,\ol b)\sim(\ol b,\ol a);\] 
$D$ is a set of representatives for this relation. Accordingly, we introduce the following convention used throughout the paper. Whenever $(o_{a,b})_{(a,b)\in D}$ is a $D$-indexed family, we extend the notation $o_{a,b}$ by symmetry to all pairs $a\neq b$ in $N$, with the understanding that $o_{a,b}=o_{b,a}=o_{\ol a,\ol b}=o_{\ol b,\ol a}$. This convention applies, for example, to the coordinates of a point $(w_{a,b})_{(a,b)\in D}\in\bR^D$, and to the generators $\Delta_{a,b}$ of $\cA$ considered next.

As a $\bC$-algebra, $\cA$ is generated by $n^2$ elements $\Delta_{a,b}$, $(a,b)\in D$, which form the set of all cluster variables and frozen variables. Let $S$ denote the polynomial ring $\bC[z_{t,i}]_{t\in\{1,2\},\, i\in[1,n]}$ in $2n$ variables. We use the following as a definition of $\cA$.

\begin{prop}[{\cite[Proposition 12.13]{FomZel2003}}]\label{prop:AinS1}
There is an injective homomorphism $\iota$ from $\cA$ to $S$ defined on the generators by
\begin{align*}
\iota(\Delta_{i,j}) & = z_{1,i}z_{2,j}-z_{1,j}z_{2,i},\quad 1\le i<j\le n,\\
\iota(\Delta_{i,\ol j}) & = z_{1,i}z_{1,j}+z_{2,i}z_{2,j},\quad 1\le i\le j\le n.
\end{align*}
\end{prop}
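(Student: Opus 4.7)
The plan is to realize $\iota$ as the restriction of the type $\rA_{2n-3}$ Plücker embedding along a central-symmetry equivariance, so that the statement becomes a consequence of the folding from $\rA_{2n-3}$ to $\rC_{n-1}$.

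Extend $Z = (z_{t,i})$ to a $2 \times 2n$ matrix $\wt Z$ with columns indexed by $N$ by setting $\wt z_{t, i} = z_{t, i}$, $\wt z_{1, \ol i} = z_{2, i}$, and $\wt z_{2, \ol i} = -z_{1, i}$ for $i \in [1, n]$. The row rotation by $\pi/2$ on $\bC^2$ then intertwines the column involution $k \mapsto \ol k$, so every $2 \times 2$ minor $p_{kl} = \wt z_{1,k}\wt z_{2,l} - \wt z_{1,l}\wt z_{2,k}$ satisfies $p_{\ol k, \ol l} = p_{k,l}$. Direct computation then gives $p_{i,j} = \iota(\Delta_{i,j})$ for $1 \le i < j \le n$ and $p_{i, \ol j} = -\iota(\Delta_{i, \ol j})$ for $1 \le i \le j \le n$. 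Hence $\iota$ agrees, up to signs, with the restriction to $\Spec S$ of the standard Plücker embedding of the cluster algebra of type $\rA_{2n-3}$ into $\bC[\wt z_{t,k}]_{t \in \{1,2\},\, k \in N}$.

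Granting this, well-definedness of $\iota$ reduces to verifying that each exchange relation of $\cA$ is a $\sigma$-invariant three-term Plücker relation, which holds universally in the ambient ring. Injectivity follows from the folding statement (see, e.g.,~\cite{FomZel2003}): the cluster algebra of type $\rC_{n-1}$ arises as the subalgebra of the cluster algebra of type $\rA_{2n-3}$ fixed by the central-symmetry involution on chord labels, and the restriction of the Plücker embedding to this fixed subalgebra remains injective.

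The main obstacle is the combinatorial bookkeeping underlying the folding: one must verify that the abstract seeds of $\cA$ correspond bijectively to centrally symmetric triangulations of the $2n$-gon (each consisting of one diameter together with $n - 2$ centrally symmetric pairs of non-diameter diagonals, yielding the required $n - 1$ orbits of mutable variables), and that the abstract exchange relations match the $\sigma$-orbits of Ptolemy flips. Once this dictionary is pinned down, the rest of the argument is direct polynomial computation in the $z_{t,i}$.
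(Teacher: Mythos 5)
The paper does not prove this proposition: it is imported verbatim from Fomin--Zelevinsky (Proposition 12.13 of~\cite{FomZel2003}) and used as the working definition of $\cA$. So there is no paper proof to compare against. Nevertheless, your sketch contains a genuine conceptual error worth flagging.

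Your injectivity argument rests on the claim that ``the cluster algebra of type $\rC_{n-1}$ arises as the subalgebra of the cluster algebra of type $\rA_{2n-3}$ fixed by the central-symmetry involution on chord labels.'' This is false: if $\sigma$ denotes the involution $\wh\Delta_{a,b}\mapsto\wh\Delta_{\ol a,\ol b}$ of $\wh\cA$, then $\wh\cA$ is integral over $\wh\cA^\sigma$ and hence $\dim\wh\cA^\sigma=\dim\wh\cA=4n-3$, whereas $\dim\cA=2n-1$. The correct relationship, spelled out in the paper immediately after the proposition, is a \emph{surjection} $q\colon\wh\cA\to\cA$ obtained by specializing the columns of the generic $2\times 2n$ matrix so that $p_{\ol a,\ol b}=p_{a,b}$; geometrically, $X=\Spec\cA$ is a linear section of the affine cone over $\Gr(2,2n)$, not an invariant-theoretic quotient. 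Your matrix computation (that the specialization $\wt z_{1,\ol i}\mapsto z_{2,i}$, $\wt z_{2,\ol i}\mapsto -z_{1,i}$ forces $p_{\ol k,\ol l}=p_{k,l}$) is correct up to sign conventions and is indeed the mechanism behind $q$, but it does not exhibit $\cA$ as a fixed subalgebra, so the appeal to a ``folding statement'' giving injectivity for free does not go through. Since injectivity of $\iota$ is the nontrivial content of the proposition (well-definedness, given the triangulation dictionary, is the easier half), the sketch as written has a real gap; one would instead need, e.g., a transcendence-degree or dimension argument for the image of $\iota$, which is what the Fomin--Zelevinsky proof supplies.
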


Set $1\prec\dots\prec n\prec\ol 1\prec\dots\prec\ol n$. Consider a matrix $Z$, with columns labeled by $N$ according to $\prec$ and rows labeled by $\{1,2\}$, such that $Z_{1,i}=Z_{2,\ol i}=z_{1,i}$ and $Z_{2,i}=-Z_{1,\ol i}=z_{2,i}$ for all $i\in[1,n]$.
We observe that the embedding $\iota$ takes $\Delta_{a,b}$ to the minor of $Z$ spanned by columns $a$ and $b$.


To realize $\cA$ via generators and relations, we first consider the cluster algebra $\wh \cA$ of type $\rA_{2n-3}$ (see~\cite[Example 12.6]{FomZel2003}). It is generated by elements $\wh\Delta_{a,b}$ with $a,b\in N$ and $a\prec b$. Since $\wh\cA$ is the Pl\"ucker algebra of the Grassmannian $\Gr(2,2n)$, it admits an embedding into $\wh S=\bC[z_{t,a}]_{t\in\{1,2\},\,a\in N}$, which takes $\wh\Delta_{a,b}$ to $z_{1,a}z_{2,b}-z_{1,b}z_{2,a}$. Consider the surjection from $\wh S$ to $S$ which, for $i\in[1,n]$, preserves $z_{1,i}$ and $z_{2,i}$, while $z_{1,\ol i}\mapsto -z_{2,i}$ and $z_{2,\ol i}\mapsto z_{1,i}$. In view of Proposition~\ref{prop:AinS1}, this restricts to a surjection $q\colon\wh\cA\to\cA$ given by $q(\wh\Delta_{a,b})=\Delta_{a,b}$.

The defining relations of $\wh\cA$ are the three-term Pl\"ucker relations
\[\wh\Delta_{a,b}\wh\Delta_{c,d}+\wh\Delta_{a,d}\wh\Delta_{b,c}=\wh\Delta_{a,c}\wh\Delta_{b,d}\] 
for all quadruples $a\prec b\prec c\prec d$ in $N$. In view of the surjection $q$, the relations
\[\Delta_{a,b}\Delta_{c,d}+\Delta_{a,d}\Delta_{b,c}=\Delta_{a,c}\Delta_{b,d}\] 
with $a\prec b\prec c\prec d$ hold in $\cA$. We claim that this is a complete set of relations for $\cA$.

Set $R=\bC[x_{a,b}]_{(a,b)\in D}$. We have a surjection $p\colon R\to\cA$ taking $x_{a,b}$ to $\Delta_{a,b}$, denote its kernel by $I$. For $a\prec b\prec c\prec d$ we set
\[r_{a,b,c,d} = x_{a,b}x_{c,d} + x_{a,d}x_{b,c} - x_{a,c}x_{b,d}\in R.\]
If $(a',b',c',d')$ is a permutation of $(a,b,c,d)$, we also denote $r_{a',b',c',d'}=r_{a,b,c,d}$.

\begin{prop}\label{prop:Igens}
$I$ is generated by the expressions $r_{a,b,c,d}$ for all $a\prec b\prec c\prec d$.
\end{prop}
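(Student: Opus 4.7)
The plan is to relate $\cA$ to the type A Pl\"ucker algebra $\wh\cA$ by lifting the surjection $q$ to the polynomial-ring level. Define $\pi\colon \wh R \twoheadrightarrow R$ by $\pi(\wh x_{a,b}) = x_{a',b'}$, with $(a',b')\in D$ the canonical representative of the orbit $\{\{a,b\},\{\ol a,\ol b\}\}$. Its kernel is generated by the linear binomials $\wh x_{i,j}-\wh x_{\ol i,\ol j}$ (for $1\le i<j\le n$) and $\wh x_{i,\ol j}-\wh x_{j,\ol i}$ (for $1\le i<j\le n$). A direct check gives $\pi(\wh r_{a,b,c,d}) = r_{a,b,c,d}$ and that $\pi, \wh p, p, q$ form a commutative square of surjections.

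Since $\wh I$ is generated by the Pl\"ucker relations $\wh r_{a,b,c,d}$ and $\pi$ is surjective, the ideal $\pi(\wh I)\subseteq R$ is generated by the $r_{a,b,c,d}$, and commutativity yields $\pi(\wh I)\subseteq I$. The reverse inclusion $I\subseteq\pi(\wh I)$ follows by a diagram chase from the claim
\[\ker q = (G)\quad \text{in}\ \wh\cA, \qquad G = \{\wh\Delta_{i,j}-\wh\Delta_{\ol i, \ol j}\mid i<j\}\cup\{\wh\Delta_{i,\ol j}-\wh\Delta_{j,\ol i}\mid i<j\}.\]
Indeed, given $g\in I$, lift to $\wh g\in\wh R$ with $\pi(\wh g)=g$; then $\wh p(\wh g)\in\ker q = (G)$, so subtracting a suitable preimage in $\ker\pi$ produces an element of $\wh I$ whose $\pi$-image is $g$. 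The inclusion $(G) \subseteq \ker q$ is immediate from $q(\wh\Delta_{a,b}) = q(\wh\Delta_{\ol a, \ol b})$ and the analogous mixed identity, so the substance lies in the reverse inclusion.

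To establish $\ker q\subseteq (G)$, the main obstacle, I would argue that the surjection $\wh\cA/(G)\twoheadrightarrow\cA$ is an isomorphism by matching spanning sets. Starting from the Hodge standard monomial basis of $\wh\cA$, consisting of chain monomials $\prod_k \wh\Delta_{a_k, b_k}$ with $a_k\prec b_k$ and both $(a_k)$, $(b_k)$ weakly $\prec$-increasing, one reduces each factor modulo the linear identifications in $G$ to place its index pair in $D$, yielding a spanning set of \emph{symmetrized} chain monomials for $\wh\cA/(G)$. The plan is then to check linear independence of the images in $\cA\hookrightarrow S=\bC[z_{t,i}]$ via a leading-term argument under a monomial order on $S$ that distinguishes the quadratic shapes $z_{1,i}z_{2,j}-z_{1,j}z_{2,i}$ and $z_{1,i}z_{1,j}+z_{2,i}z_{2,j}$; this forces injectivity of $\wh\cA/(G)\to\cA$, hence $\ker q = (G)$. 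The hard combinatorial work is precisely this independence step, matching symmetrized chains with distinct monomials in $S$; once it is in hand, the rest of the proposition is formal diagram chasing.
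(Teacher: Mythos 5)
Your high-level reduction is correct: once one knows $\ker q = (G)$ in $\wh\cA$, the diagram chase across the commutative square formed by $q_x$ ($=$ your $\pi$), $\wh p$, $p$, $q$ does yield $I = q_x(\wh I)$, and the inclusion $q_x(\wh I)\subseteq I$ is immediate from commutativity. It is also a useful reformulation: it makes explicit that $X$ is a linear section of the cone over $\Gr(2,2n)$, which the paper itself notes as a consequence of the proposition. But observe that the chase is reversible --- given $g\in\ker q$, lift to $\wh g\in\wh R$, note $q_x(\wh g)\in I=q_x(\wh I)$, subtract an element of $\wh I$ to land in $\ker q_x$, and push forward by $\wh p$ --- so the claim $\ker q=(G)$ is \emph{equivalent} to the proposition rather than a simplification of it. The entire content of your proof therefore lives in the step you flag as ``the hard combinatorial work.''

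That step has a genuine gap. You propose to reduce the Hodge standard monomial basis of $\wh\cA$ modulo $G$ to a spanning set of ``symmetrized chain monomials'' and then verify that their images in $\cA\hookrightarrow S$ are linearly independent via a leading-term argument. The spanning part is automatic, but the independence is a nontrivial SAGBI-type assertion about the quadrics $\iota(\Delta_{a,b})\subset S$: one needs both a concrete description of which products survive the reduction (distinct Hodge chains can collapse to the same product) and a term order on $S$ under which these products have pairwise distinct leading monomials, with no cancellation. None of this is supplied, and it is comparable in difficulty to the external input the paper leans on --- namely, that $I_\mon=q_x(\wh I_\mon)$ is an initial ideal of $I$ (cited from the literature) --- which the paper then combines with a graded-dimension comparison using the explicit weight $w_{a,b}=\ln|a-b|$ to conclude $J=I$. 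As written, your argument defers precisely the part that constitutes the proof; if the leading-term computation were carried out, the route would be self-contained and would also produce a monomial basis of $\cA$ as a byproduct, but that computation is where essentially all of the work lies.
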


\begin{remark}
It is clear that the $2n\choose 4$ quadratic generators provided by Proposition~\ref{prop:Igens} are not pairwise distinct. Specifically, $r_{a,b,c,d}=r_{\ol a,\ol b,\ol c,\ol d}$. This leaves us with a total of $\frac{n(n-1)(2n^2-4n+3)}6$ distinct relations which are seen to be linearly independent. Among them are the $\frac{n(n-1)(n^2-2n+3)}6$ exchange relations of type $\rC_{n-1}$: those $r_{a,b,c,d}$ for which either $d\prec\ol a$, or $c=\ol a$ and $d=\ol b$, see~\cite[\S~12.3]{FomZel2003}.
\end{remark}

To prove Proposition~\ref{prop:Igens}, we introduce some further notation. Consider the polynomial ring $\wh R=\bC[\wh x_{a,b} \mid a\prec b]$ in $2n\choose 2$ variables. We have a surjection $\wh p\colon\wh R\to \wh\cA$ taking $\wh x_{a,b}$ to $\wh\Delta_{a,b}$, its kernel $\wh I$ is the Pl\"ucker ideal. We also have a surjection $q_x\colon\wh R\to R$ taking $\wh x_{a,b}$ to $x_{a,b}$. By construction, $p\circ q_x=q\circ \wh p$, while Proposition~\ref{prop:Igens} claims that $q_x(\wh I)=I$.

Denote by $\wh I_\mon$ the monomial ideal in $\wh R$ generated by products $\wh x_{a,c}\wh x_{b,d}$ such that $a\prec b\prec c\prec d$. As is well known (see~\cite{SpeStu2004}), $\wh I_\mon$ is an initial ideal of $\wh I$. 
Its type C counterpart is the monomial ideal $I_\mon=q_x(\wh I_\mon)$ in $ R$. As shown in \cite[Corollary~5.3.2]{IltNCTref2025}, $I_\mon$ is an initial ideal of $I$. 
In particular, the graded dimensions of $I$ and $I_\mon$ coincide.

\begin{proof}[Proof of Proposition~\ref{prop:Igens}]
We have already seen that all expressions $r_{a,b,c,d}$ lie in $I$. Thus, the ideal $J$ generated by these expressions is contained in $I$.

For $a,b\in N$ we set $|a-b|=k+1$ where $k$ is the number of elements lying strictly between $a$ and $b$ in the order $\prec$. Consider $w\in\bR^D$ such that $w_{a,b}=\ln(|a-b|)$. 
The ideal $I_\mon$ is generated by products $x_{a,c}x_{b,d}$ such that $a\prec b\prec c\prec d$. However, for such a quadruple, our choice of $w$ provides
\begin{equation*}
\init_w r_{a,b,c,d}=-x_{a,c}x_{b,d}.    
\end{equation*} 
Consequently, $I_\mon\subset \init_w J$, where $J=\al r_{a,b,c,d}\ar_{a\prec b\prec c\prec d}$ as before. We obtain
\[\grdim J=\grdim(\init_w J)\ge\grdim(I_\mon)=\grdim I.\]
Since $J\subset I$, we must have $J=I$.
\end{proof}


We denote $X=\Spec\cA$, this is the \textit{type C cluster variety}: an irreducible affine variety of dimension $2n-1$. The main goal of this paper is to study the tropicalization $\Trop X=\Trop I$. This is a polyhedral fan in $\bR^D$ which we term the \textit{type C tropical cluster variety}.

\begin{remark}
In~\cite{FomZel2003}, the variety $X$ is realized as a certain space of $SO_2(\bC)$-invariants in $\mathrm{Mat}_{2,n}(\bC)$. Proposition~\ref{prop:Igens} provides a different realization: $X$ is the section of the affine cone over $\Gr(2,2n)$ by a linear subspace.
\end{remark}

\subsection{Another monomial ideal}\label{sec:monideal}

Consider an alternative ordering of the set $N$:
\[1\dotprec \dots\dotprec  n\dotprec \ol n\dotprec \dots\dotprec \ol 1.\]
In other words, $a\dotprec  b$ if and only if $s(a)\prec s(b)$, where $s$ is the permutation of $N$ such that $s(i)=i$ and $s(\ol i)=\ol{n+1-i}$ for $i\in[1,n]$.

We define the ideal $\wh I'_\mon\subset\wh R$ generated by products $\wh x_{a,c}\wh x_{b,d}$ for which $a\dotprec b\dotprec c\dotprec d$, where we use the convention $\wh x_{a,b}=-\wh x_{b,a}$ (note that $x_{a,b}\neq q_x(\wh x_{a,b})$ for $a\succ b$). The ideal $\wh I'_\mon$ is obtained from $\wh I_\mon$ by the natural action of the permutation $s$ given by $s(\wh x_{a,b})=\wh x_{s(a),s(b)}$. In particular, since $s(\wh I)=\wh I$, we see that $\wh I'_\mon$ is also an initial ideal of $\wh I$.
We now introduce the ideal $I'_\mon=q_x(\wh I'_\mon)$ in $R$. 
This ideal is generated by products $x_{a,c}x_{b,d}$ for which $a\dotprec b\dotprec c\dotprec d$. 

While $\wh I'_\mon$ differs from $\wh I_\mon$ only by a permutation of the variables, the same cannot be said about $I'_\mon$ and $I_\mon$. Their structures are rather different, for example, $I_\mon$ is square-free (as a monomial ideal) but $I'_\mon$ contains $x_{1,\ol 2}^2=q_x(\wh x_{1,\ol 2}\wh x_{2,\ol 1})$. Nonetheless,
 we prove the following.

\begin{lemma}\label{lem:ImonI}
$I'_\mon$ is an initial ideal of $I$. Consequently, products $\Delta_{a_1,b_1}\dots\Delta_{a_m,b_m}$ such that $x_{a_1,b_1}\dots x_{a_m,b_m}\notin\wh I'_\mon$ form a $\bC$-basis of $\cA$.
\end{lemma}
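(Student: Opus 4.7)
The proof mirrors Proposition~\ref{prop:Igens}. First, introduce the weight $w'\in\bR^D$ defined by setting $w'_{a,b}$ to be the logarithm of the distance from $a$ to $b$ in the $\dotprec$-order, equivalently $w'_{a,b}=\ln|s(a)-s(b)|$ in the notation of Proposition~\ref{prop:Igens}. The identity $|s(a)-s(b)|=|s(\ol a)-s(\ol b)|$ makes $w'$ consistent with the notational convention extending $\bR^D$ to all distinct pairs.

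Second, for each quadruple $a\prec b\prec c\prec d$, I would compute the three $w'$-weights of the monomials in $r_{a,b,c,d}$ and identify the strict maximum. The analysis splits by the number $k$ of barred elements in $\{a,b,c,d\}$. For $k\in\{0,1,3,4\}$, the $\prec$- and $\dotprec$-orders induce the same cyclic order on $\{a,b,c,d\}$, so parametrizing the $\dotprec$-positions by consecutive gaps $\alpha,\beta,\gamma$ yields the same dominance $(\alpha+\beta)(\beta+\gamma)>\max(\alpha\gamma,\beta(\alpha+\beta+\gamma))$ as in Proposition~\ref{prop:Igens}, giving $\init_{w'}r_{a,b,c,d}=-x_{a,c}x_{b,d}$, the $\dotprec$-crossing monomial of the quadruple and hence the generator of $I'_\mon$ indexed by $\{a,b,c,d\}$. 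For $k=2$, write the unbarred elements as $i<j$ and the barred ones as $\ol k,\ol l$ with $k<l$; then the $\dotprec$-order on the quadruple is $i\dotprec j\dotprec\ol l\dotprec\ol k$ and the $\dotprec$-crossing is $\{(i,\ol l),(j,\ol k)\}$. A direct computation with $\dotprec$-positions shows that the middle term $x_{i,\ol l}x_{j,\ol k}$ of $r_{i,j,\ol k,\ol l}$ now carries the strictly maximum weight, so $\init_{w'}r_{i,j,\ol k,\ol l}=x_{i,\ol l}x_{j,\ol k}$, again the generator of $I'_\mon$ indexed by $\{a,b,c,d\}$. This yields $I'_\mon\subseteq\init_{w'}I$.

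Since $\grdim\init_{w'}I=\grdim I$, to conclude equality it remains to verify $\grdim I'_\mon=\grdim I$. For this, I would lift everything to $\wh R$. The same case analysis applied to each three-term Pl\"ucker relation $\wh r_{a,b,c,d}$ shows $\wh I'_\mon\subseteq\init_{\wh w'}\wh I$ for the lifted weight $\wh w'_{a,b}=\ln|s(a)-s(b)|$; since $\grdim\wh I'_\mon=\grdim\wh I$ (as $\wh I'_\mon$ is the permutation image of $\wh I_\mon$ under $s$), this forces $\wh I'_\mon=\init_{\wh w'}\wh I$. Now $\ker q_x$ is generated by linear differences $\wh x_{a,b}-\wh x_{\ol a,\ol b}$ between variables of equal $\wh w'$-weight, hence is $\wh w'$-homogeneous. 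This homogeneity makes the Rees-algebra realization of the $\wh w'$-Gr\"obner degeneration of $\wh I$ compatible with the quotient $q_x\colon\wh R\to R$, so the degeneration descends to a flat degeneration of $R/I$ to $R/I'_\mon$ via $w'$. Hence $\init_{w'}I=q_x(\wh I'_\mon)=I'_\mon$. The basis statement is then a standard consequence: the monomials of $R$ outside the monomial ideal $I'_\mon$ project to a $\bC$-basis of $R/I=\cA$.

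The main obstacle is the final descent step: carrying initial-ideal formation across $q_x$ may a priori fail if the top $\wh w'$-part of some element of $\wh I$ is killed by $q_x$, and the $\wh w'$-homogeneity of $\ker q_x$ is precisely what rules this out. A Buchberger-style alternative would verify directly that $\{r_{a,b,c,d}\}$ is a Gr\"obner basis of $I$ with respect to $w'$ by reducing all $S$-polynomials, but the Rees-algebra argument seems more structural.
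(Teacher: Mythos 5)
Your direct computation of $\init_{w'} r_{a,b,c,d}$ in the first half is correct and is in fact a shortcut the paper doesn't take: writing the $\dotprec$-gaps as $\alpha,\beta,\gamma$, the dominance $(\alpha+\beta)(\beta+\gamma)=\alpha\gamma+\beta(\alpha+\beta+\gamma)>\max(\alpha\gamma,\beta(\alpha+\beta+\gamma))$ shows $\init_{w'}r_{a,b,c,d}$ is always the single $\dotprec$-crossing monomial, so $I'_\mon\subseteq\init_{w'}I$. The paper instead routes through the toric ideal $I_\tor$ (Lemmas~\ref{lem:ImonItor} and~\ref{lem:ItorI}), which is more work but also proves the separately needed fact that $I_\tor$ is an initial ideal of $I$.

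The descent step, however, has a genuine gap, and your own flag of the ``main obstacle'' does not get resolved by what you assert. Knowing that $\ker q_x$ is $\wh w'$-homogeneous does \emph{not} rule out the top $\wh w'$-part of an element of $\wh I$ being killed by $q_x$. The reason the naive repair fails is that if $f\in\wh I$ and $q_x(\init_{\wh w'}f)=0$, you would want to replace $f$ by $f-\init_{\wh w'}f$, but $\init_{\wh w'}f$ need not lie in $\wh I$, so the replacement leaves the ideal. A minimal counterexample to the general commutation claim: take $\wh R=\bC[x,y]$, $R=\bC[z]$, $q_x(x)=q_x(y)=z$, and weight $1$ on all three variables, so $\ker q_x=(x-y)$ is graded. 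Let $\wh I=(x^2-y^2+x)$. Then $q_x(\wh I)=(z)$, $\init(\wh I)=(x^2-y^2)$, and $q_x(\init(\wh I))=0\neq(z)=\init(q_x(\wh I))$; the Gr\"obner degeneration of $\wh R/\wh I$ does not descend to a flat degeneration of $R/q_x(\wh I)$ despite the homogeneity of the kernel. In the paper's situation one needs an independent verification that $\grdim I'_\mon=\grdim I$, which the paper supplies by the explicit combinatorial injectivity argument for the monomial map $\pi$ in the proof of Lemma~\ref{lem:ImonItor} (showing that distinct monomials outside $I'_\mon$ are separated by $\pi$). Your Buchberger-style fallback would also give a correct, self-contained proof if carried out, but it is not carried out here, so as written the argument is incomplete.
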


\begin{remark}
The products $\Delta_{a_1,b_1}\dots\Delta_{a_m,b_m}$ such that $x_{a_1,b_1}\dots x_{a_m,b_m}\notin I_\mon$ are known as \textit{cluster monomials} and form a different basis of $\cA$, see~\cite[Theorem 4.3.1]{IltNCTref2025}.
\end{remark}

To prove the lemma, we construct a toric initial ideal of $I$ that itself has $I_\mon$ as an initial ideal. Consider the polynomial ring $U=\bC[u_i,v_j]_{i,j\in[1,n]}$. We define a homomorphism $\pi\colon R\to U$ by
\begin{align*}
\pi(x_{i,j})&=v_iv_j\prod_{k=i}^{j-1} u_k,& &1\le i<j\le n,\\
\pi(x_{i,\ol j})&=v_iv_j\prod_{k=i}^n u_k\prod_{k=j}^n u_k,& &1\le i\le j\le n.
\end{align*}
This homomorphism can be interpreted as follows. Consider the path graph $G$ in Figure~\ref{fig:graphG}. Its vertex set is $N$, ordered according to $\dotprec$. For $i\in[1,n-1]$, the edge $\{i,i+1\}$ is denoted by $e_i$ and the edge $\{\ol i,\ol{i+1}\}$ by $e_{\ol i}$, while the edge $\{n,\ol n\}$ is denoted by $e_n$. For $a,b\in N$, let $\Path(a,b)$ denote the set of edges lying on the path between vertices $a$ and $b$. Then $\pi(x_{a,b})$ is the product of $v_a$, $v_{|b|}$ and $\prod_{e_c\in\Path(a,b)}u_{|c|}$, with an additional factor of $u_n$ if $e_n\in\Path(a,b)$. The kernel of $\pi$ is a toric ideal that we denote by $I_\tor$.

\begin{figure}[h!tbp]

\begin{tikzpicture}[x=15mm, y=10mm]
\node[circle, fill=black, inner sep=2] (a) at (0,0) {};
\node[circle, fill=black, inner sep=2] (b) at (1,0) {};
\node[circle, fill=black, inner sep=2] (c) at (2.5,0) {};
\node[circle, fill=black, inner sep=2] (d) at (3.5,0) {};
\node[circle, fill=black, inner sep=2] (e) at (5,0) {};
\node[circle, fill=black, inner sep=2] (f) at (6,0) {};

\node at (0,0.3) {$1$};
\node at (0.5,0.3) {$e_1$};
\node at (1,0.3) {$2$};
\node at (2.5,0.3) {$n$};
\node at (3,0.3) {$e_n$};
\node at (3.5,0.3) {$\ol n$};
\node at (5,0.32) {$\ol 2$};
\node at (5.5,0.3) {$e_{\ol 1}$};
\node at (6,0.31) {$\ol 1$};

\draw (a) -- (b)
      (c) -- (d)
      (e) -- (f);
\draw[dashed] 
      (b) -- (c)
      (d) -- (e);
\end{tikzpicture} 

\caption{The graph $G$.}\label{fig:graphG}

\end{figure}

Before proceeding with the proof, we provide a table for the reader's convenience, listing the four ideals we consider in $R$. The indicated generating set of $I_\tor$ will be obtained in the proof of Lemma~\ref{lem:ItorI}.
\begin{table}[h!tbp]
\centering
\begin{tabular}{c|c|c|c|c}
Ideal  &  $I$ & $I_\mon$ & $I'_\mon$ & $I_\tor$\\ 
\hline
Generators  & \makecell{$r_{a,b,c,d}$,\\ \scalebox{1}{$a\prec b\prec c\prec d$}} & \makecell{$x_{a,c}x_{b,d}$,\\ \scalebox{1}{$a\prec b\prec c\prec d$}} & \makecell{$x_{a,c}x_{b,d}$,\\ \scalebox{1}{$a\dotprec b\dotprec c\dotprec d$}} & \makecell{$x_{a,c}x_{b,d}-x_{a,d}x_{b,c}$,\\ \scalebox{1}{$a\dotprec b\dotprec c\dotprec d$}}  
\end{tabular}
\vspace{-3mm}
\end{table}

\begin{lemma}\label{lem:ImonItor}
$I'_\mon$ is an initial ideal of $I_\tor$.    
\end{lemma}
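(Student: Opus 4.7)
The plan is to exhibit a weight $w\in\bR^D$ for which $\init_w I_\tor$ equals $I'_\mon$. Take $w_{a,b}:=-(\phi(b)-\phi(a))^2$, where $\phi\colon N\to\{1,\dots,2n\}$ records position in the $\dotprec$-order, so $\phi(1)=1,\dots,\phi(n)=n,\phi(\ol n)=n+1,\dots,\phi(\ol 1)=2n$.

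\emph{Forward inclusion $I'_\mon\subseteq\init_w I_\tor$.} For each quadruple $a\dotprec b\dotprec c\dotprec d$ in $N$, I would consider the binomial $f_{a,b,c,d}:=x_{a,c}x_{b,d}-x_{a,d}x_{b,c}$. Its membership in $I_\tor$ is checked by comparing multisets: the endpoint multisets of both products equal $\{|a|,|b|,|c|,|d|\}$, while $\Path(a,c)\sqcup\Path(b,d)$ and $\Path(a,d)\sqcup\Path(b,c)$ both equal the multiset consisting of each edge of $\Path(a,b)\sqcup\Path(c,d)$ once and each edge of $\Path(b,c)$ twice. A direct computation gives $w_{a,c}+w_{b,d}-w_{a,d}-w_{b,c}=2(\phi(d)-\phi(c))(\phi(b)-\phi(a))>0$, so $\init_w f_{a,b,c,d}=x_{a,c}x_{b,d}$. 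As these leading monomials generate $I'_\mon$ when $(a,b,c,d)$ ranges over all such quadruples, the inclusion follows.

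\emph{Equality via graded dimension.} Since $\grdim\init_w I_\tor=\grdim I_\tor$, and since the inclusion above gives $\grdim I'_\mon\le\grdim\init_w I_\tor$, it suffices to prove that $R/I_\tor$ and $R/I'_\mon$ have the same graded dimension in each degree $d$. The former counts distinct $\pi$-images of degree-$d$ monomials, the latter counts standard monomials of $I'_\mon$ of degree $d$. Iterating the substitution $x_{a,c}x_{b,d}\mapsto x_{a,d}x_{b,c}$ on any monomial preserves $\pi$ (by the forward inclusion argument), strictly decreases $w$, and thus terminates at a standard monomial in the same $\pi$-fiber. So $\pi$ restricted to standard monomials is surjective onto the set of $\pi$-images, and it remains to show this restriction is injective.

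\emph{Main obstacle.} Injectivity of $\pi$ on standard monomials is the combinatorial heart of the argument. A standard monomial corresponds to a non-crossing multi-chord diagram on $N$ (with vertices arranged in $\dotprec$-order), taken modulo the involution $a\leftrightarrow\ol a$; the exponents of $\pi(m)$ record the vertex- and edge-multiplicities in this $\bZ/2$-quotient. The plan is to prove injectivity by induction on the degree via a canonical outermost chord peeling procedure: locate the smallest $k$ with $v_k$-exponent positive and use the exponents of $u_1,\dots,u_{k-1},u_n$ together with non-crossing rigidity to pin down a unique chord incident to the orbit $\{k,\ol k\}$, then remove this chord and iterate on the reduced multiplicity data. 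The main difficulty is a case-by-case treatment of self-paired chords $\{k,\ol k\}$ and of chords crossing the central edge $e_n$, whose interaction with the $\bZ/2$-identification is delicate. As a structural alternative, one can run a Newman-style diamond argument: termination of the Plücker-type reduction is immediate from the strict $w$-decrease, local confluence amounts to a finite overlap check analogous to the classical Buchberger analysis for the Plücker ideal of $\Gr(2,2n)$, and uniqueness of normal forms in each $\pi$-fiber yields the required injectivity.
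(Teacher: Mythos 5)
Your overall strategy is exactly the paper's: pick a weight $w$, show $I'_\mon\subseteq\init_w I_\tor$ via the binomials $x_{a,c}x_{b,d}-x_{a,d}x_{b,c}$, and then close the gap by showing $\grdim I'_\mon\ge\grdim I_\tor$, which you correctly reduce to injectivity of $\pi$ on monomials outside $I'_\mon$. Your quadratic weight $w_{a,b}=-(\phi(b)-\phi(a))^2$ works and the computation $w_{a,c}+w_{b,d}-w_{a,d}-w_{b,c}=2(\phi(d)-\phi(c))(\phi(b)-\phi(a))>0$ is correct; the paper's logarithmic weight $\ln|\Path(a,b)|$ serves the same purpose, but your choice is just as good. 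The verification that the binomials lie in $I_\tor$ via the path/endpoint multiset computation is also sound.

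The genuine gap is that you never actually prove injectivity of $\pi$ on standard monomials --- which you correctly flag as ``the combinatorial heart of the argument'' but then leave as two unexecuted sketches. The ``outermost chord peeling'' sketch is essentially the direction the paper takes (it picks the smallest $a$ with $v_a$ dividing $\pi(M)$, then the $\dotprec$-largest $b$ with $x_{a,b}\mid M$, and shows this $b$ is determined by $\pi(M)$), but the paper's actual case analysis --- distinguishing $b_1\in[1,n]$, $|b_1|<n$, and $b_1=\ol n$, and using the degree inequalities $\deg_{v_{|b_1|}}\le\deg_{u_{|b_1|-1}}+\deg_{u_{|b_1|}}$ and $\deg_{v_n}\le\deg_{u_{n-1}}+2\deg_{u_n}$ --- is precisely the nontrivial content, and it is not something you can wave at by invoking ``non-crossing rigidity.'' The self-paired chords $\{k,\ol k\}$ and chords through $e_n$ that you identify as the ``delicate'' cases are exactly the ones that break the naive type-A argument, so skipping them skips the lemma.

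Your ``structural alternative'' via Newman's diamond lemma does not close the argument either. Termination plus local confluence of the rewriting system generated by $x_{a,c}x_{b,d}\mapsto x_{a,d}x_{b,c}$ would only establish that the ideal $J$ generated by these binomials has $I'_\mon$ as an initial ideal, i.e.\ $\grdim J=\grdim I'_\mon$. But what you need is $\grdim I'_\mon\ge\grdim I_\tor$, and since $J\subseteq I_\tor$ a priori gives only $\grdim J\le\grdim I_\tor$, you would still be left to show $J=I_\tor$ --- equivalently, that these quadratic binomials generate the full toric kernel of $\pi$. That is not a formal consequence of confluence; it is again exactly the injectivity statement. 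In short, both of your proposed routes reduce to the same unproven combinatorial claim, so the proof as written is incomplete.
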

\begin{proof}
For every quadruple $a\dotprec b\dotprec c\dotprec d$, the binomial $x_{a,c}x_{b,d}-x_{a,d}x_{b,c}$ lies in $I_\tor$. Furthermore, if we define $w'\in\bR^D$ by $w'_{a,b}=\ln{|\Path(a,b)|}$, then 
\[\init_{w'}(x_{a,c}x_{b,d}-x_{a,d}x_{b,c})=x_{a,c}x_{b,d}.\]
This shows that $I'_\mon\subset\init_{w'} I_\tor$.

By~\eqref{eq:presdims}, for the reverse inclusion it suffices to check that $\grdim I'_\mon\ge\grdim I_\tor$. For $m\ge 0$, let $\cM_m$ denote the set of degree $m$ monomials in $R$. By construction, $(\grdim I_\tor)_m$ is equal to $|\cM_m| - |\pi(\cM_m)|$. Meanwhile, $(\grdim I'_\mon)_m=|\cM_m|-|\cM_m\bs I'_\mon|$. Thus, we are to prove that $|\pi(\cM_m)|\ge|\cM_m\bs I'_\mon|$, which we do by showing that $\pi$ is injective on $\cM_m\bs I'_\mon$, i.e., that a monomial $M\notin I'_\mon$ is determined uniquely by $\pi(M)$.

Consider 
$M =x_{a_1,b_1}\dots x_{a_m,b_m}\notin I'_\mon$,
where all $(a_i,b_i)\in D$. Let $A(M)$ denote the multiset of subsegments of $[1,n+1]$ obtained as the multiset union of $\{[a_i,b_i]\}_{b_i\in[1,n]}$ and $\bigcup_{b_i\notin[1,n]}\{[a_i,n+1],[\ol{b_i},n+1]\}$, where all braces and unions are again multiset operations.
The multiset $A(M)$ determines $M$. Indeed, let $x_{i_1,\ol{j_1}}x_{i_2,\ol{j_2}}$ divide $M$, where $1\le i_t\le j_t\le n$. Since $M\notin I'_\mon$, the segments $[i_1,j_1]$ and $[i_2,j_2]$ are either disjoint or intersect in a single point that is an endpoint of both: in all other cases one finds $a\dotprec b\dotprec c\dotprec d$ such that $q_x(\wh x_{a,c}\wh x_{b,d})=x_{i_1,\ol{j_1}}x_{i_2,\ol{j_2}}$. Hence, if the elements of $A(M)$ ending in $n+1$ are $[i_1,n+1],\dots,[i_{2l},n+1]$ with $i_1\le\dots\le i_{2l}$, then
\[M=\prod_{j=1}^l x_{i_{2j-1},\ol{i_{2j}}}\cdot \prod_{[a_i,b_i]\in A(M)\,|\, b_i\in[1,n]} x_{a_i,b_i}.\]

Thus, we show that $A(M)$ can be recovered from $\pi(M)$. For $i\in[1,n+1]$, let $s_i$ and $t_i$ denote the number of elements in $A(M)$ that start and end in $i$ respectively. To recover $A(M)$ it suffices to determine $s_i$ and $t_i$ for all $i$. Indeed, $M\notin I'_\mon$ implies that $[a_1,b_1],[a_2,b_2]\in A(M)$ cannot satisfy $a_1<a_2<b_1<b_2$. Hence, to determine $A(M)$ from the multisets of starting and ending points, we may go through all starting points in decreasing order, successively pairing each starting point $a$ with the smallest unpaired ending point that is greater than $a$. In other words, if we place an opening bracket in every starting point and a closing bracket in every ending point, with the closing brackets preceding the opening ones within each position, we obtain a regular bracket sequence.

For $i\in[1,n]$, let $f_i$ and $g_i$ denote the exponents of, respectively, $u_i$ and $v_i$ in $\pi(M)$. Note that $f_i$ is the number of segments in $A(M)$ that contain $[i,i+1]$, while $g_i$ is the number of those that have an endpoint in $i$. Setting $f_0=0$, we obtain for $i\in[1,n]$:
\[
\begin{cases}
s_i+t_i = g_i,\\
s_i-t_i = f_i-f_{i-1}.
\end{cases}
\]
This allows us to determine $s_i$ and $t_i$ from $\pi(M)$. Finally, $s_{n+1}=0$ and $t_{n+1}=f_n$.
\end{proof}

\begin{lemma}\label{lem:ItorI}
$I_\tor$ is an initial ideal of $I$.    
\end{lemma}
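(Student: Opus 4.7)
The plan is to work with the weight $w \in \bR^D$ defined by $w_{a,b} = |\Path(a,b)|$, the integer path-length on $G$ (note: not the logarithmic weight used in Lemma~\ref{lem:ImonItor}), and to prove $\init_w I = I_\tor$ by showing both inclusions separately: the inclusion $I_\tor \subseteq \init_w I$ via an explicit computation of initial forms, and the reverse via a Krull-dimension and primality argument.

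I would first do a case analysis on the number of barred elements among $a \prec b \prec c \prec d$, which is necessarily a suffix of the tuple and thus equals $0, 1, 2, 3,$ or $4$; this determines the $\dotprec$-ordering $(p,q,r,s)$ of $\{a,b,c,d\}$. Setting $x=|\Path(p,q)|$, $y=|\Path(q,r)|$, $z=|\Path(r,s)|$, the three pairing sums are $|\Path(p,q)|+|\Path(r,s)|=x+z$, $|\Path(p,r)|+|\Path(q,s)|=x+2y+z$, and $|\Path(p,s)|+|\Path(q,r)|=x+2y+z$, so the two ``$\dotprec$-crossing'' monomials $x_{p,r}x_{q,s}$ and $x_{p,s}x_{q,r}$ simultaneously achieve the maximum $w$-weight. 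Tracking signs in $r_{a,b,c,d}$ case by case (the surviving binomial is $x_{a,d}x_{b,c}-x_{a,c}x_{b,d}$ when the number of barred elements is $0, 1, 2$, or $4$, and $x_{a,b}x_{c,d}-x_{a,c}x_{b,d}$ when it is $3$) shows that $\init_w r_{a,b,c,d}$ is always a binomial $\pm(x_{p,s}x_{q,r}-x_{p,r}x_{q,s})$; this lies in $I_\tor$ by the same $\pi$-computation as in Lemma~\ref{lem:ImonItor}.

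Let $B$ denote the ideal generated by these initial binomials, so $B \subseteq I_\tor \cap \init_w I$. To see $B = I_\tor$, apply the logarithmic weight $w'$ from Lemma~\ref{lem:ImonItor}: it sends each generator of $B$ to the monomial $x_{p,r}x_{q,s}$ up to sign, and as $(a,b,c,d)$ ranges over $\prec$-ordered quadruples $(p,q,r,s)$ ranges over all $\dotprec$-ordered ones (since any such quadruple necessarily has its barred elements as a suffix). Thus $\init_{w'}B \supseteq I'_\mon$, which combined with $B \subseteq I_\tor$ and the identity $\init_{w'}I_\tor = I'_\mon$ forces $\init_{w'}B = I'_\mon$, giving $\grdim B = \grdim I_\tor$ and hence $B = I_\tor$. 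So $I_\tor \subseteq \init_w I$.

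For the reverse inclusion, Gröbner degeneration preserves Krull dimension, so $\dim R/\init_w I = \dim \cA = 2n-1$ and therefore $\dim R/I_\tor \ge 2n-1$. A direct inspection shows every exponent vector of $\pi(x_{a,b})$ satisfies the linear relation ``$u_1$-degree $= v_1$-degree'': vertex $1$ is a leaf of $G$ so $\Path(a,b)$ uses the edge $e_1$ iff $1\in\{a,|b|\}$, and the only occurrence of $\ol 1$ in $\{a,|b|\}$ forces $(a,b)=(1,\ol 1)$, in which case $e_{\ol 1}$ is also used (and contributes again to the $u_1$ variable). So the exponent lattice of $\pi(R)$ has rank at most $2n-1$, giving $\dim R/I_\tor = 2n-1$. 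Since $V(\init_w I)\subseteq V(I_\tor)$, both have dimension $2n-1$, and $V(I_\tor)$ is irreducible (toric), the two varieties coincide set-theoretically; primality of $I_\tor$ then yields $\sqrt{\init_w I}=I_\tor$ and so $\init_w I\subseteq I_\tor$, completing the proof. The main obstacle I anticipate is the sign-tracking in the case analysis---especially the $3$-barred case, where the surviving binomial takes a different form---but this is purely mechanical.
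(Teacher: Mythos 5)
Your proof is correct and follows essentially the same approach as the paper's: you use the same weight $w'' = (|\Path(a,b)|)$, verify that the $w''$-initial forms of the $r_{a,b,c,d}$ are the binomials generating $I_\tor$, and close the gap via Krull dimension using the hyperplane relation $\deg_{u_1}\pi(x_{a,b})=\deg_{v_1}\pi(x_{a,b})$ together with primality of $I_\tor$. The only cosmetic differences are that you parametrize the sign-check by the number of barred elements in the $\prec$-ordered quadruple (the paper lists the four resulting $\prec$-orderings of a $\dotprec$-ordered quadruple, which is the same case split), and that you phrase the reverse inclusion through varieties and radicals where the paper argues directly that a prime ideal cannot be properly contained in another ideal of the same codimension.
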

\begin{proof}
As seen in the proof of Lemma~\ref{lem:ImonItor}, the initial forms $\init_{w'}(x_{a,c}x_{b,d}-x_{a,d}x_{b,c})$ with $a\dotprec b\dotprec c\dotprec d$ generate $\init_{w'} I_\tor=I'_\mon$. This means that such binomials $x_{a,c}x_{b,d}-x_{a,d}x_{b,c}$ form a Gr\"obner basis of $I_\tor$ with respect to $w'$, in particular, generate $I_\tor$. We first check that every such binomial lies in $\init_{w''}I$, where $w''\in\bR^D$ is defined by $w''_{a,b}=|\Path(a,b)|$. 

Indeed, if $a\dotprec b\dotprec c\dotprec d$, then 
\[\init_{w''} r_{a,b,c,d}=\pm x_{a,c}x_{b,d} \pm x_{a,d}x_{b,c}.\]
It remains to check that the two signs in the above binomial are distinct. By comparing the orders $\dotprec $ and $\prec$, we see that there are four possibilities: $a\prec b\prec c\prec d$, $a\prec b\prec d\prec c$, $a\prec d\prec c\prec b$ and $d\prec c\prec b\prec a$. The desired condition holds in all four cases.

Thus, $I_\tor\subset\init_{w''} I$. Next, note that for any variable $x_{a,b}$ we have $\deg_{u_1}\pi(x_{a,b})=\deg_{v_1}\pi(x_{a,b})$. Hence, we have a hyperplane in $\bR^{2n}$ that contains the exponent vectors of all monomials of the form $\pi(x_{a,b})$. This means that the toric ring $\pi(R)\cong R/I_\tor$ has Krull dimension at most $2n-1$. Since $I_\tor$ is prime, the quotient of $R$ by any strictly larger ideal must have Krull dimension less than $2n-1$. However, 
\[\dim (R/\init_{w''} I)=\dim (R/I)=\dim\cA=2n-1,\] and we conclude that $\init_{w''} I=I_\tor$.
\end{proof}

\begin{proof}[Proof of Lemma~\ref{lem:ImonI}]
The first claim follows directly from Lemmas~\ref{lem:ImonItor} and~\ref{lem:ItorI} by transitivity. The second claim follows from the first as a general property of monomial initial ideals of homogeneous ideals.    
\end{proof}

\subsection{Automorphisms and group actions}

Let $W$ denote the group of those permutations $\tau$ of the set $N$ that satisfy $\tau(\ol a)=\ol{\tau(a)}$---this is a Coxeter group of type $\mathrm{BC}_n$. For $\tau\in W$ we define the automorphism $h_\tau$ of $R$ such that for $(a,b)\in D$ one has
\begin{equation*}\label{eq:htau}
h_\tau\colon x_{a,b}\mapsto\varepsilon_1(b,\tau)\varepsilon_2(a,b,\tau) x_{\tau(a),\tau(b)},    
\end{equation*}
where $\varepsilon_1(b,\tau),\,\varepsilon_2(a,b,\tau)\in\{1,-1\}$ are determined as follows:
\begin{itemize}
\item $\varepsilon_1(b,\tau)=-1$ if and only if $b\notin[1,n]$ and $\tau(b)\in[1,n]$;
\item $\varepsilon_2(a,b,\tau)=-1$ if and only if $\tau(b)\prec\tau(a)$.
\end{itemize}

The reasoning behind this definition is as follows. From Proposition~\ref{prop:AinS1} and its accompanying discussion, recall the ring $S$, the embedding $\iota\colon\cA\to S$, and the matrix $Z$. Consider the automorphism $g_\tau$ of $S$ such that $g_\tau(z_{t,i})=Z_{t,\tau(i)}$ for $t\in\{1,2\}$ and $i\in[1,n]$. In other words, the submatrix of $g_\tau(Z)$ spanned by columns $i$ and $\ol i$ is $\begin{psmallmatrix} z_{1,\tau(i)}&-z_{2,\tau(i)}\\ z_{2,\tau(i)}&z_{1,\tau(i)} \end{psmallmatrix}$ if $\tau(i)\in[1,n]$ and $\begin{psmallmatrix} -z_{2,|\tau(i)|}&-z_{1,|\tau(i)|}\\ z_{1,|\tau(i)|}&-z_{2,|\tau(i)|} \end{psmallmatrix}$ otherwise.
Thus, the matrix $g_\tau(Z)$ is obtained from $Z$ by permuting the columns according to $\tau$ and then, for every $b\notin[1,n]$, multiplying column $b$ by $\varepsilon_1(b,\tau)$. Therefore, 
\begin{equation}\label{eq:gtauongens}
g_\tau(\iota(\Delta_{a,b}))=\pm\iota(\Delta_{\tau(a),\tau(b)}),
\end{equation}
implying that $g_\tau$ preserves the subalgebra $\iota(\cA)$ and $\iota^{-1}\circ g_\tau\circ\iota$ is an automorphism of $\cA$.
Now, for any distinct $c,d\in N$ one has
$\iota(\Delta_{c,d})=\varepsilon\begin{vsmallmatrix} Z_{1,c}&Z_{1,d}\\ Z_{2,c}&Z_{2,d} \end{vsmallmatrix}$, where $\varepsilon=1$ if $c\prec d$ and $\varepsilon=-1$ otherwise. Therefore, the sign in~\eqref{eq:gtauongens} evaluates precisely to $\varepsilon(a,b,\tau)$. We conclude that $\iota\circ p\circ h_\tau=g_\tau\circ\iota\circ p$, meaning that $h_\tau$ lifts the automorphism $\iota^{-1}\circ g_\tau\circ\iota$ to $R$. In particular, $\ker(p\circ h_\tau)=\ker p$, and we obtain

\begin{prop}\label{prop:htaupreservesI}
For every $\tau\in W$ one has $h_\tau(I)=I$.    
\end{prop}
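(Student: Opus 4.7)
The plan is to deduce the proposition directly from the intertwining identity $\iota\circ p\circ h_\tau=g_\tau\circ\iota\circ p$, which is essentially already assembled in the paragraph preceding the statement. Once that identity is in hand, the proposition becomes a one-line comparison of kernels.

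The first step would be to verify that $g_\tau$ is a well-defined $\bC$-algebra automorphism of $S$ and that it satisfies $g_\tau(Z_{t,c})=\varepsilon_1(c,\tau)Z_{t,\tau(c)}$ for every column index $c\in N$, by a short case analysis depending on whether $c$ and $\tau(c)$ lie in $[1,n]$ (using the defining relations $Z_{1,\ol i}=-z_{2,i}$, $Z_{2,\ol i}=z_{1,i}$ together with $\tau(\ol i)=\ol{\tau(i)}$). The second step is the sign computation: for $(a,b)\in D$, the $2\times 2$-minor expansion gives
\[g_\tau(\iota(\Delta_{a,b}))=\varepsilon_1(a,\tau)\varepsilon_1(b,\tau)\varepsilon_2(a,b,\tau)\,\iota(\Delta_{\tau(a),\tau(b)}),\]
where $\varepsilon_2(a,b,\tau)$ accounts for swapping the two columns of the minor so that the $\prec$-smaller index appears first. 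Since $(a,b)\in D$ forces $a\in[1,n]$, one has $\varepsilon_1(a,\tau)=1$, and the prefactor collapses to precisely the sign used in the definition of $h_\tau$. Together with Proposition~\ref{prop:AinS1}, this yields the intertwining identity $\iota\circ p\circ h_\tau=g_\tau\circ\iota\circ p$.

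The proposition then follows formally. Since $\iota$ is injective (Proposition~\ref{prop:AinS1}) and $g_\tau$ is an automorphism,
\[\ker(p\circ h_\tau)=\ker(\iota\circ p\circ h_\tau)=\ker(g_\tau\circ\iota\circ p)=\ker(\iota\circ p)=\ker p=I.\]
On the other hand, $h_\tau$ is itself a $\bC$-algebra automorphism of $R$ (it permutes the variables up to sign), so $\ker(p\circ h_\tau)=h_\tau^{-1}(I)$. Comparing the two expressions yields $h_\tau^{-1}(I)=I$, and hence $h_\tau(I)=I$.

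The only delicate point is the sign bookkeeping in the minor expansion, which the preceding paragraph already carries out; beyond that I do not foresee any obstacle. The invariance $h_\tau(I)=I$ then feeds directly into the $W$-symmetry of the subsequent tropical constructions.
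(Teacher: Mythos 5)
Your argument is exactly the paper's: it establishes the intertwining identity $\iota\circ p\circ h_\tau=g_\tau\circ\iota\circ p$ via the column-wise sign computation $g_\tau(Z_{t,c})=\varepsilon_1(c,\tau)Z_{t,\tau(c)}$ and the minor expansion, then concludes by comparing kernels. The only difference is that you make the final inference ($\ker(p\circ h_\tau)=h_\tau^{-1}(I)$, hence $h_\tau(I)=I$) slightly more explicit than the paper, which leaves it implicit.
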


It is easily checked that we do not, in general, have $h_\tau\circ h_\sigma=h_{\tau\sigma}$ or $g_\tau\circ g_\sigma=g_{\tau\sigma}$, which means that we do not obtain a $W$-action on $\cA$ and $X$, see also Remark~\ref{rem:groupaction}.

The group $W$ does, however, act on the set $D$ with $\tau\in W$ taking $(a,b)\in D$ to the unique element of 
\[\{(\tau(a),\tau(b)),(\tau(b),\tau(a)),(\tau(\ol a),\tau(\ol b)),(\tau(\ol b),\tau(\ol a))\}\cap D.\] This induces an action on $\bR^D$ given by $\tau(w)_{\tau(a),\tau(b)}=w_{a,b}$.
We also observe that for every monomial $M\in R$ and $w\in\bR^D$, the grading of $M$ with respect to $w$ is equal to the grading of $h_\tau(M)$ with respect to $\tau(w)$. In combination with Proposition~\ref{prop:htaupreservesI}, this provides

\begin{prop}\label{prop:htauinI}
For every $\tau\in W$ and $w\in\bR^D$, one has $h_\tau(\init_w I)=\init_{\tau(w)} I$.
\end{prop}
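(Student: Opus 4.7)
The plan is to reduce the statement to a compatibility between $h_\tau$ and the operation of taking initial forms, applied to arbitrary polynomials, and then use the fact that initial ideals are spanned by initial forms of their elements.

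First, I would verify the key fact on monomials: for any monomial $M = \prod x_{a_i,b_i}$, the image $h_\tau(M)$ is a scalar multiple of the monomial $M' = \prod x_{\tau(a_i),\tau(b_i)}$ (with indices interpreted via the convention on $D$), and by the observation already made in the excerpt, the $w$-grading of $M$ equals the $\tau(w)$-grading of $M'$. Concretely, $\tau(w)_{\tau(a),\tau(b)} = w_{a,b}$, so each variable contributes the same weight after relabeling. From this, one deduces that for any $r \in R$,
\[
h_\tau(\init_w r) = \init_{\tau(w)} h_\tau(r),
\]
because $h_\tau$ is a ring automorphism that scales individual monomials, so it maps the top $w$-homogeneous component of $r$ to the top $\tau(w)$-homogeneous component of $h_\tau(r)$.

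Next, I would recall that $\init_w I$ is by definition the ideal generated by $\{\init_w r : r \in I\}$. Applying the automorphism $h_\tau$ to this generating set gives
\[
h_\tau(\init_w I) = \bigl\langle h_\tau(\init_w r) : r \in I \bigr\rangle = \bigl\langle \init_{\tau(w)} h_\tau(r) : r \in I \bigr\rangle,
\]
using the compatibility from the previous step together with the fact that automorphisms take generating sets of an ideal to generating sets of its image. By Proposition~\ref{prop:htaupreservesI}, as $r$ ranges over $I$, so does $h_\tau(r)$. Hence the right-hand side equals $\langle \init_{\tau(w)} s : s \in I \rangle = \init_{\tau(w)} I$, which is the desired equality.

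There is no real obstacle here; the only thing to be careful about is verifying that the signs $\varepsilon_1, \varepsilon_2$ appearing in the definition of $h_\tau$ are irrelevant for the argument (they merely rescale monomials and do not affect the weight or the fact that the image of a generating set is a generating set). The substantive input is the monomial-to-monomial compatibility of weights, which is immediate from the definition $\tau(w)_{\tau(a),\tau(b)} = w_{a,b}$, together with Proposition~\ref{prop:htaupreservesI} to close the loop $h_\tau(I) = I$.
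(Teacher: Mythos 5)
Your proof is correct and takes essentially the same approach as the paper: the paper's argument is precisely the observation that $h_\tau$ scales monomials in a way that intertwines the $w$-grading with the $\tau(w)$-grading, combined with Proposition~\ref{prop:htaupreservesI} that $h_\tau(I)=I$. You have simply spelled out the details.
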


Since $h_\tau(\init_w I)$ is monomial-free (resp.\ monomial) if and only if $\init_w I$ is monomial-free (resp.\ monomial), we have the following consequence.
\begin{cor}\label{cor:fansWinv}
Both the tropical cluster variety $\Trop X$ and the Gr\"obner fan of $I$ are preserved by the action of $W$ on $\bR^D$.
\end{cor}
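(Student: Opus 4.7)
The corollary should follow almost immediately from Proposition~\ref{prop:htauinI} together with the observation that $h_\tau$ scales each variable $x_{a,b}$ by $\pm 1$ (and permutes the variable set), so it sends monomials to monomials. My plan is to translate these two facts into the two claims of the corollary.

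First, for the tropical cluster variety, I would fix $\tau\in W$ and $w\in\bR^D$, and argue that $w\in|\Trop I|$ if and only if $\tau(w)\in|\Trop I|$. By definition, $w\in|\Trop I|$ means $\init_w I$ contains no monomial. By Proposition~\ref{prop:htauinI}, $\init_{\tau(w)}I=h_\tau(\init_w I)$. Since $h_\tau$ maps each variable to a scalar multiple of another variable, it restricts to a bijection on the set of (nonzero) monomials of $R$, and hence on the set of ideals containing a monomial. Thus $\init_w I$ is monomial-free if and only if $\init_{\tau(w)}I$ is monomial-free, which gives $W$-invariance of the support $|\Trop I|$.

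For the cone structure, I would observe that two weights $w_1,w_2\in\bR^D$ lie in the same tropical cone precisely when $\init_{w_1}I=\init_{w_2}I$ and this ideal is monomial-free. Applying Proposition~\ref{prop:htauinI} componentwise gives $\init_{\tau(w_1)}I=h_\tau(\init_{w_1}I)=h_\tau(\init_{w_2}I)=\init_{\tau(w_2)}I$, while by the previous paragraph monomial-freeness is preserved. Hence $\tau$ maps each tropical cone onto a tropical cone. The same argument, without the monomial-free qualifier, shows that $\tau$ permutes Gröbner cones, yielding $W$-invariance of $\Grob I$.

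No real obstacle is expected: everything reduces to the fact that $h_\tau$ is a monomial automorphism (each variable is sent to $\pm$ another variable), together with Proposition~\ref{prop:htauinI}. The only thing to be mildly careful about is that $h_\tau$ does not define a group action on $R$, but this is irrelevant here, as the corollary asserts $W$-invariance of certain fans in $\bR^D$ (on which $W$ does act), and each individual $\tau$ is only used to transport ideals between points of $\bR^D$.
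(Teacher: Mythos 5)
Your proposal is correct and follows essentially the same route as the paper: the paper obtains the corollary directly from Proposition~\ref{prop:htauinI} together with the observation that $h_\tau(\init_w I)$ is monomial-free (resp.\ a monomial ideal) if and only if $\init_w I$ is, which is exactly your monomial-automorphism point. The extra detail you provide about tropical and Gr\"obner cones being matched up is a fine unpacking of the same idea, and your closing caveat about $h_\tau$ not being a group action mirrors the paper's Remark~\ref{rem:groupaction}.
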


\begin{remark}\label{rem:groupaction}
Although the automorphisms $h_\tau$ will suffice for our study of the tropical cluster variety, the group action on $\cA$ generated by the induced automorphisms may be of interest. First, we describe the group of automorphisms of $S$ generated by the $g_\tau$. For each element $g$ of this group, the matrix $g(Z)$ is obtained from $Z$ by permuting the columns according to some $\tau\in W$ and then, for every $i\in[1,n]$,
\begin{itemize}
\item changing the sign of neither or both of the columns $\tau(i)$ and $\tau(\ol i)$ \textbf{if} $\tau(i)\in[1,n]$, or
\item changing the sign of exactly one of the columns $\tau( \ol i)$ and $\tau(i)$ \textbf{if} $\tau(i)\notin[1,n]$.
\end{itemize}
The latter $n$ binary choices are determined by a $\beta\in(\bZ/2)^n$ (with $\beta_i=0$ indicating the first choice in both cases), and we denote the element $g$ by $g_{\tau,\beta}$. In particular, $g_{\tau,(0,\dots,0)}=g_\tau$. The resulting group of $2^{2n}n!$ elements acts faithfully on $S$, however, $g_{\tau,\beta}$ is trivial on $\iota(S)$ if and only if $\beta$ is $(0,\dots,0)$ or $(1,\dots,1)$ and $\tau$ lies in the two-element center $Z(W)$. As a result, we obtain a faithful $\wt W$-action on $\cA$, where $\wt W$ has order $2^{2n-2}n!$ and fits into the exact sequence
\begin{equation}\label{eq:exactseq}
1\to(\bZ/2)^{n-1}\to\wt W\to W/Z(W)\to 1.    
\end{equation}
This exact sequence does not split already for $n=3$.
\end{remark}

\section{ASPTs and the main theorem}

This section largely contains a reinterpretation of the combinatorial constructions in~\cite{CoxMakh2025}, with two key new additions: Lemma~\ref{lem:keylemma} and the main Theorem~\ref{thm:main}.

\subsection{Axially symmetric phylogenetic trees}

A \textit{phylogenetic tree} is a pair $(T,\vv)$ consisting of a tree $T$ with $2n$ leaves and no vertices of degree 2, together with a bijection $\vv$ from $N$ to the set of leaf vertices of $T$. Here, the tree $T$ is abstract, meaning that we consider phylogenetic trees only up to label-preserving isomorphism.


Let $P$ denote a regular $2n$-gon. We say that two diagonals of $P$ cross if they share exactly one interior point. We use the term \textit{subdivision of $P$} to refer to a polygonal subdivision of $P$ formed by a set $\Theta$ of pairwise non-crossing diagonals. We identify such a subdivision with the set $\Theta$. A \textit{labeling} of $P$ is a bijection from $N$ to the set of sides of $P$. Given a subdivision $\Theta$ of $P$ and a labeling $\ph$ of $P$, we obtain a phylogenetic tree $\cT_{\Theta,\ph}=(T,\vv)$ as the dual graph. Explicitly:
\begin{itemize}
\item the non-leaf vertices of $T$ correspond to the cells (polygons) of $\Theta$, with two such vertices adjacent if and only if the respective cells share a side;
\item for $a\in N$, the leaf $\vv(a)$ is adjacent to the vertex corresponding to the cell that contains the side $\ph(a)$.
\end{itemize}
It is clear that any phylogenetic tree has the form $\cT_{\Theta,\ph}$ for an appropriate choice of $\Theta$ and $\ph$. However, for a given phylogenetic tree this choice is not unique, even modulo symmetries of $P$.\footnote{For example, if $(T,\vv)=\cT_{\Theta,\ph}$ and the leaves $\vv(a)$ and $\vv(b)$ have a common neighbor, then $(T,\vv)=\cT_{\Theta,\ph'}$, where $\ph'$ is obtained from $\ph$ by swapping the images of $a$ and $b$.}

Now, we fix one of the $n$ longest diagonals of $P$ and denote it by $\delta_0$. We say that a subdivision of $P$ is \textit{axially symmetric} if with every diagonal it also contains its mirror image with respect to $\delta_0$. Note that such a subdivision can only contain diagonals that do not cross $\delta_0$ or are perpendicular to $\delta_0$. A labeling of $P$ is \textit{axially symmetric} if the sides $\ph(i)$ and $\ph(\ol i)$ are symmetric to each other with respect to $\delta_0$ for every $i\in [1,n]$. 

\begin{defn}
An \textit{axially symmetric phylogenetic tree} (or \textit{ASPT}) is a phylogenetic tree that has the form $\cT_{\Theta,\ph}$ for an axially symmetric subdivision $\Theta$ and an axially symmetric labeling $\ph$.
\end{defn}

We see that every ASPT $(T,\vv)$ is equipped with an involution $\sigma$ of $T$ that exchanges vertices $\vv(i)$ and $\vv(\ol i)$. Since a tree automorphism is determined by the images of the leaves, $\sigma$ is unique. We will refer to $\sigma$ as the \textit{symmetry} of $(T,\vv)$.

\begin{example}\label{ex:ASPTn=3}
For $n=3$, every ASPT has one of the 7 forms shown in Figure~\ref{fig:ASPTn=3}, where the set $\{a,b,c\}\subset N$ satisfies $\{|a|,|b|,|c|\}=\{1,2,3\}$, and we label each vertex $v$ by the element $\vv^{-1}(v)\in N$.

\begin{figure}[h!tbp]

\begin{tikzpicture}[x=5mm, y=5mm]
\node at (-2,2) {1.};

\node[circle, fill=black, inner sep=2] (p) at (0,0) {};

\node[circle, fill=black, inner sep=2] (a) at (-1,1) {};
\node[circle, fill=black, inner sep=2] (b) at (1,1) {};
\node[circle, fill=black, inner sep=2] (c) at (-2,0) {};
\node[circle, fill=black, inner sep=2] (d) at (2,0) {};
\node[circle, fill=black, inner sep=2] (e) at (-1,-1) {};
\node[circle, fill=black, inner sep=2] (f) at (1,-1) {};

\node at (-1.5,1) {$a$};
\node at (1.5,1) {$\ol a$};
\node at (-2.5,0) {$b$};
\node at (2.5,0) {$\ol b$};
\node at (-1.5,-1) {$c$};
\node at (1.5,-1) {$\ol c$};

\draw (p) -- (a)
      (p) -- (b)
      (p) -- (c)
      (p) -- (d)
      (p) -- (e)
      (p) -- (f);
\end{tikzpicture} 
\begin{tikzpicture}[x=5mm, y=5mm]
\node at (-2.5,2) {2.};

\node[circle, fill=black, inner sep=2] (p) at (0,1) {};
\node[circle, fill=black, inner sep=2] (q) at (0,0) {};

\node[circle, fill=black, inner sep=2] (a) at (-1,2) {};
\node[circle, fill=black, inner sep=2] (b) at (1,2) {};
\node[circle, fill=black, inner sep=2] (c) at (-2,0) {};
\node[circle, fill=black, inner sep=2] (d) at (2,0) {};
\node[circle, fill=black, inner sep=2] (e) at (-1,-1) {};
\node[circle, fill=black, inner sep=2] (f) at (1,-1) {};

\node at (-1.5,2) {$a$};
\node at (1.5,2) {$\ol a$};
\node at (-2.5,0) {$b$};
\node at (2.5,0) {$\ol b$};
\node at (-1.5,-1) {$c$};
\node at (1.5,-1) {$\ol c$};

\draw (p) -- (a)
      (p) -- (b)
      (q) -- (c)
      (q) -- (d)
      (q) -- (e)
      (q) -- (f)
      (p) -- (q);
\end{tikzpicture} 
\begin{tikzpicture}[x=5mm, y=5mm]
\node at (-2,2) {3.};

\node[circle, fill=black, inner sep=2] (p) at (0,0) {};
\node[circle, fill=black, inner sep=2] (q) at (1,0) {};

\node[circle, fill=black, inner sep=2] (a) at (-1,1) {};
\node[circle, fill=black, inner sep=2] (b) at (2,1) {};
\node[circle, fill=black, inner sep=2] (c) at (-2,0) {};
\node[circle, fill=black, inner sep=2] (d) at (3,0) {};
\node[circle, fill=black, inner sep=2] (e) at (-1,-1) {};
\node[circle, fill=black, inner sep=2] (f) at (2,-1) {};

\node at (-1.5,1) {$a$};
\node at (2.5,1) {$\ol a$};
\node at (-2.5,0) {$b$};
\node at (3.5,0) {$\ol b$};
\node at (-1.5,-1) {$c$};
\node at (2.5,-1) {$\ol c$};

\draw (p) -- (a)
      (q) -- (b)
      (p) -- (c)
      (q) -- (d)
      (p) -- (e)
      (q) -- (f)
      (p) -- (q);
\end{tikzpicture} 
\begin{tikzpicture}[x=5mm, y=5mm]
\node at (-2.5,2) {4.};

\node[circle, fill=black, inner sep=2] (p) at (0,1) {};
\node[circle, fill=black, inner sep=2] (q) at (0,0) {};
\node[circle, fill=black, inner sep=2] (r) at (0,-1) {};

\node[circle, fill=black, inner sep=2] (a) at (-1,2) {};
\node[circle, fill=black, inner sep=2] (b) at (1,2) {};
\node[circle, fill=black, inner sep=2] (c) at (-2,0) {};
\node[circle, fill=black, inner sep=2] (d) at (2,0) {};
\node[circle, fill=black, inner sep=2] (e) at (-1,-2) {};
\node[circle, fill=black, inner sep=2] (f) at (1,-2) {};

\node at (-1.5,2) {$a$};
\node at (1.5,2) {$\ol a$};
\node at (-2.5,0) {$b$};
\node at (2.5,0) {$\ol b$};
\node at (-1.5,-2) {$c$};
\node at (1.5,-2) {$\ol c$};

\draw (p) -- (a)
      (p) -- (b)
      (q) -- (c)
      (q) -- (d)
      (r) -- (e)
      (r) -- (f)
      (p) -- (q)
      (q) -- (r);
\end{tikzpicture}
\vspace{2mm}

\begin{tikzpicture}[x=5mm, y=5mm]
\node at (-2.5,1.5) {5.};

\node[circle, fill=black, inner sep=2] (p) at (0,0) {};
\node[circle, fill=black, inner sep=2] (q) at (-1,-1) {};
\node[circle, fill=black, inner sep=2] (r) at (1,-1) {};

\node[circle, fill=black, inner sep=2] (a) at (-1,1) {};
\node[circle, fill=black, inner sep=2] (b) at (1,1) {};
\node[circle, fill=black, inner sep=2] (c) at (-2,-1) {};
\node[circle, fill=black, inner sep=2] (d) at (2,-1) {};
\node[circle, fill=black, inner sep=2] (e) at (-1,-2) {};
\node[circle, fill=black, inner sep=2] (f) at (1,-2) {};

\node at (-1.5,1) {$a$};
\node at (1.5,1) {$\ol a$};
\node at (-2.5,-1) {$b$};
\node at (2.5,-1) {$\ol b$};
\node at (-1.5,-2) {$c$};
\node at (1.5,-2) {$\ol c$};

\draw (p) -- (a)
      (p) -- (b)
      (q) -- (c)
      (r) -- (d)
      (q) -- (e)
      (r) -- (f)
      (p) -- (q)
      (p) -- (r);
\end{tikzpicture}
\begin{tikzpicture}[x=5mm, y=5mm]
\node at (-2.5,2) {6.};

\node[circle, fill=black, inner sep=2] (p) at (0,1) {};
\node[circle, fill=black, inner sep=2] (q) at (0,0) {};
\node[circle, fill=black, inner sep=2] (r) at (-1,-1) {};
\node[circle, fill=black, inner sep=2] (s) at (1,-1) {};

\node[circle, fill=black, inner sep=2] (a) at (-1,2) {};
\node[circle, fill=black, inner sep=2] (b) at (1,2) {};
\node[circle, fill=black, inner sep=2] (c) at (-2,-1) {};
\node[circle, fill=black, inner sep=2] (d) at (2,-1) {};
\node[circle, fill=black, inner sep=2] (e) at (-1,-2) {};
\node[circle, fill=black, inner sep=2] (f) at (1,-2) {};

\node at (-1.5,2) {$a$};
\node at (1.5,2) {$\ol a$};
\node at (-2.5,-1) {$b$};
\node at (2.5,-1) {$\ol b$};
\node at (-1.5,-2) {$c$};
\node at (1.5,-2) {$\ol c$};

\draw (p) -- (a)
      (p) -- (b)
      (r) -- (c)
      (s) -- (d)
      (r) -- (e)
      (s) -- (f)
      (p) -- (q)
      (q) -- (r)
      (q) -- (s);
\end{tikzpicture} 
\begin{tikzpicture}[x=5mm, y=5mm]
\node at (-2.5,1.5) {7.};

\node[circle, fill=black, inner sep=2] (p) at (0,0) {};
\node[circle, fill=black, inner sep=2] (q) at (1,0) {};
\node[circle, fill=black, inner sep=2] (r) at (-1,-1) {};
\node[circle, fill=black, inner sep=2] (s) at (2,-1) {};

\node[circle, fill=black, inner sep=2] (a) at (-1,1) {};
\node[circle, fill=black, inner sep=2] (b) at (2,1) {};
\node[circle, fill=black, inner sep=2] (c) at (-2,-1) {};
\node[circle, fill=black, inner sep=2] (d) at (3,-1) {};
\node[circle, fill=black, inner sep=2] (e) at (-1,-2) {};
\node[circle, fill=black, inner sep=2] (f) at (2,-2) {};

\node at (-1.5,1) {$a$};
\node at (2.5,1) {$\ol a$};
\node at (-2.5,-1) {$b$};
\node at (3.5,-1) {$\ol b$};
\node at (-1.5,-2) {$c$};
\node at (2.5,-2) {$\ol c$};

\draw (p) -- (a)
      (q) -- (b)
      (r) -- (c)
      (s) -- (d)
      (r) -- (e)
      (s) -- (f)
      (p) -- (q)
      (p) -- (r)
      (q) -- (s);
\end{tikzpicture}

\caption{The 7 forms of ASPTs for $n=3$.}\label{fig:ASPTn=3}

\end{figure}
\end{example}

\begin{defn}
A \textit{weighted phylogenetic tree} $(T,\vv,l)$ is a phylogenetic tree $(T,\vv)$ together with a weight function $l$ from the edge set of $T$ to $\bR$ such that $l(e)>0$ for every non-leaf edge $e$ (but not necessarily for the leaf edges). We also say that $l$ is a \textit{weighting} of $(T,\vv)$.
\end{defn}

A weighted phylogenetic tree $(T,\vv,l)$ defines a ``distance'' function $d_{T,\vv,l}\colon N^2\to\bR$. For vertices $u$, $v$ of $T$, let $\Path(u,v)$ denote the set of edges lying in the non-self-intersecting path between $u$ and $v$. 
We set 
\[d_{T,\vv,l}(a,b)=\sum_{e\in\Path(\vv(a),\vv(b))}l(e).\] 
The function $d_{T,\vv,l}(a,b)$ is symmetric and satisfies $d_{T,\vv,l}(a,a)=0$. 

\begin{defn}
An \textit{axially symmetric weighted phylogenetic tree} (or \textit{ASWPT}) is a weighted phylogenetic tree $(T,\vv,l)$ such that $(T,\vv)$ is an ASPT and $l(\sigma(e))=l(e)$ for every edge $e$, where $\sigma$ is the symmetry of $(T,\vv)$.
\end{defn}

Evidently, an ASWPT $(T,\vv,l)$ has the property that $d_{T,\vv,l}(a,b)=d_{T,\vv,l}(\ol a,\ol b)$. 
We extend this property to a key criterion that will allow us to distinguish ASWPTs among weighted phylogenetic trees.

\begin{lemma}\label{lem:keylemma}
A weighted phylogenetic tree $(T,\vv,l)$ with the following two properties is an ASWPT.
\begin{enumerate}[label=(\roman*)]
\item $d_{T,\vv,l}(a,b)=d_{T,\vv,l}(\ol a,\ol b)$ for any $a,b\in N$.
\item For any $1\le i<j<k\le n$, the maximum occurs at least twice among the four quantities 
\begin{align*}
&d_{T,\vv,l}(i,j)+d_{T,\vv,l}(i,\ol j)+d_{T,\vv,l}(k,\ol k),\\
&d_{T,\vv,l}(i,k)+d_{T,\vv,l}(i,\ol k)+d_{T,\vv,l}(j,\ol j),\\
&d_{T,\vv,l}(j,k)+d_{T,\vv,l}(j,\ol k)+d_{T,\vv,l}(i,\ol i),\\
&d_{T,\vv,l}(i,\ol j)+d_{T,\vv,l}(i,k)+d_{T,\vv,l}(j,\ol k).
\end{align*}
\end{enumerate}
\end{lemma}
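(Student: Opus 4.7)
The plan has two parts: first extract a tree automorphism from condition (i), then use condition (ii) to certify that this automorphism is realized by an axial reflection of the polygon.

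Condition (i) says the leaf metric of $(T,\vv,l)$ is invariant under the involution $a\mapsto\ol a$ on $N$. Because a phylogenetic tree together with its edge weights is uniquely recovered from its leaf metric (with positive lengths on non-leaf edges), this isometry of leaves lifts to a tree automorphism $\sigma$ of $T$ with $\sigma(\vv(a))=\vv(\ol a)$ that preserves $l$. This gives the weight-symmetry clause of ASWPT for free, so it remains to show $(T,\vv)$ itself is an ASPT.

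My strategy for the latter is to produce a planar embedding of $T$ in which $\sigma$ acts as a reflection; dualizing such an embedding gives an axially symmetric subdivision with an axially symmetric labeling. At each $\sigma$-fixed vertex $v$, the induced action of $\sigma$ on the incident edges is realizable as a dihedral reflection of some cyclic order if and only if at most two of those edges are themselves $\sigma$-fixed (and at a fixed vertex, an edge is $\sigma$-fixed iff its other endpoint is fixed too). Because cyclic orders at different vertices of a tree may be chosen independently, and paired vertices inherit their cyclic order from their counterparts, the only global obstruction to axial realizability is a $\sigma$-fixed vertex with three or more fixed incident edges---so it suffices to use (ii) to rule this out.

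Let $A_1,A_2,A_3,A_4$ denote the four quantities in condition (ii) in order. Suppose, for a contradiction, that some $\sigma$-fixed vertex $v$ has three fixed edges $e_1,e_2,e_3$ leading to $\sigma$-invariant subtrees $T_1,T_2,T_3$. Each $T_s$ contains at least two leaves of $T$, which come in $\sigma$-pairs since $T_s$ is invariant and no leaf is $\sigma$-fixed, so I can pick a pair $\{i,\ol i\}\subset T_1$, $\{j,\ol j\}\subset T_2$, $\{k,\ol k\}\subset T_3$ with $i<j<k$ (after relabeling). Paths between leaves of different $T_s$ pass through $v$, while $d(k,\ol k)$ is confined to $T_3$ and satisfies
\[
d(k,\ol k)\le 2d(u_3,k)<2d(v,k),
\]
where $u_3$ is the $T_3$-endpoint of $e_3$ and the strict inequality uses $l(e_3)>0$; analogous bounds hold for $d(i,\ol i)$ and $d(j,\ol j)$. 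Since $\sigma(v)=v$, condition (i) gives $d(v,a)=d(v,\ol a)$ for every leaf $a$, and a direct expansion yields
\[
A_4=2d(v,i)+2d(v,j)+2d(v,k),\qquad A_1=2d(v,i)+2d(v,j)+d(k,\ol k),
\]
so $A_4-A_1=2d(v,k)-d(k,\ol k)>0$. Symmetric computations give $A_4>A_2$ and $A_4>A_3$, contradicting (ii). The main difficulty of the argument is this last step, i.e., locating three pairs inside three different subtrees and cleanly decomposing the six relevant distances into parts traversing $v$ and parts confined to a single subtree; the rest of the proof---the local-to-global axial-embedding construction under the fixed-edge bound---is more routine.
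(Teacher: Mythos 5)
Your approach is genuinely different from the paper's and, if completed, would be cleaner. The paper proceeds by induction on a symmetric subset $K\subset N$, with a brute-force base case $|K|=6$ (seven tree shapes, each with several labeling subcases) and an inductive step that removes a symmetric pair of leaves. You instead extract the weight-preserving automorphism $\sigma$ from (i) via Buneman-type uniqueness of the weighted tree, reduce the question to a structural characterization of ASPTs, and use (ii) only once, to rule out a $\sigma$-fixed vertex with three $\sigma$-fixed incident edges. Your use of (ii) is correct: the decomposition $A_4=2d(v,i)+2d(v,j)+2d(v,k)$, $A_1=2d(v,i)+2d(v,j)+d(k,\ol k)$, etc., and the strict bound $d(k,\ol k)<2d(v,k)$ all hold (though for the step $d(k,\ol k)\le 2d(u_3,k)$ you should note that the median of $u_3,\vv(k),\vv(\ol k)$ is $\sigma$-fixed, hence a non-leaf vertex, so the Gromov product $(k,\ol k)_{u_3}$ is a nonnegative sum of non-leaf edge lengths --- with negative leaf-edge weights permitted this is not automatic).

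The genuine gap is in the step you call "more routine." You assert that $(T,\vv)$ is an ASPT if and only if $T$ admits a planar embedding in which $\sigma$ acts as a reflection, and that the only obstruction to building such an embedding is a $\sigma$-fixed vertex with $\ge 3$ $\sigma$-fixed incident edges. This is the crux of your argument, and it is not proved. Two things need work. First, choosing $\sigma$-compatible cyclic orders at each vertex (which the local condition permits) gives a ribbon-tree automorphism that reverses orientation, but you must still argue that such an involution of the embedded tree extends to an honest reflection of the ambient disk, i.e., that the $\sigma$-fixed subtree (which your local condition forces to be a path, or empty with a single flipped edge) can be laid along a chord with the two half-trees on either side. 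Second, you must verify that dualizing this reflective embedding really produces an axially symmetric subdivision $\Theta$ and axially symmetric labeling $\ph$ of the $2n$-gon with $\cT_{\Theta,\ph}=(T,\vv)$ in the sense of the paper's definition --- in particular that the reflection axis lands on a long diagonal $\delta_0$ rather than through sides or at a non-antipodal pair of corners. Both points are plausible, and I expect they can be made rigorous, but neither is a one-liner, and the paper's author explicitly remarks that alternative axial-symmetry criteria "do not appear much easier to prove." You should either supply these arguments or at least flag them as nontrivial lemmas rather than describing them as routine.
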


The proof of Lemma~\ref{lem:keylemma} is given separately in Section~\ref{sec:keylemmaproof}. This proof is somewhat technical, however, it is independent of the rest of the discussion.

\begin{remark}\label{rem:keylemma}
Although we state and apply Lemma~\ref{lem:keylemma} as a one-directional implication, it is not hard to check that it is indeed a criterion: properties (i) and (ii) are satisfied by ASWPTs. Furthermore, one may deduce another, possibly more transparent, criterion for axial symmetry. That criterion effectively says that a phylogenetic tree is axially symmetric if and only if its subtree spanned by $\vv(i)$, $\vv(j)$, $\vv(k)$, $\vv(\ol i)$, $\vv(\ol j)$, $\vv(\ol k)$ is axially symmetric for any $i,j,k\in[1,n]$; a precise statement is given by Corollary~\ref{cor:altcriterion}. Unfortunately, this alternative form does not appear much easier to prove.
\end{remark}

\begin{example}
While the meaning and necessity of property (i) is clear, property (ii) is less straightforward. Its role is somewhat clarified by the following example. Consider the phylogenetic tree $(T,\vv)$ shown in Figure~\ref{fig:nonASPT} and a weighting $l$ of $(T,\vv)$ such that $l(e_i)=l(e_{\ol i})$ for all $i\in[1,3]$, where $e_a$ is the edge incident to $\vv(a)$. It is clear that $(T,\vv)$ is not an ASPT and, therefore, $(T,\vv,l)$ is not an ASWPT. Nonetheless, $(T,\vv,l)$ satisfies property (i) and it is property (ii) that is violated.
\begin{figure}[h!tbp]
\begin{tikzpicture}[x=5mm, y=5mm]
\node[circle, fill=black, inner sep=2] (p) at (0,1) {};
\node[circle, fill=black, inner sep=2] (q) at (0,0) {};
\node[circle, fill=black, inner sep=2] (r) at (-1,-1) {};
\node[circle, fill=black, inner sep=2] (s) at (1,-1) {};

\node[circle, fill=black, inner sep=2] (a) at (-1,2) {};
\node[circle, fill=black, inner sep=2] (b) at (1,2) {};
\node[circle, fill=black, inner sep=2] (c) at (-2,-1) {};
\node[circle, fill=black, inner sep=2] (d) at (2,-1) {};
\node[circle, fill=black, inner sep=2] (e) at (-1,-2) {};
\node[circle, fill=black, inner sep=2] (f) at (1,-2) {};

\node at (-1.5,2) {$1$};
\node at (1.5,2) {$\ol 1$};
\node at (-2.5,-1) {$2$};
\node at (2.5,-1) {$3$};
\node at (-1.5,-2) {$\ol 2$};
\node at (1.5,-2) {$\ol 3$};

\draw (p) -- (a)
      (p) -- (b)
      (r) -- (c)
      (s) -- (d)
      (r) -- (e)
      (s) -- (f)
      (p) -- (q)
      (q) -- (r)
      (q) -- (s);
\end{tikzpicture} 
\caption{}\label{fig:nonASPT}
\end{figure}
\end{example}

\subsection{The space of ASPTs}

All subdivisions and labelings of $P$ considered in this subsection are assumed to be axially symmetric.

We associate a point $w(T,\vv,l)\in\bR^D$ with every ASWPT $(T,\vv,l)$ by setting 
\[w(T,\vv,l)_{a,b}=d_{T,\vv,l}(a,b).\] 
For an ASPT $(T,\vv)$ we consider the set
\[C_{T,\vv}=\{w(T,\vv,l)\,|\,l\text{ is a weighting of }(T,\vv)\}\subset\bR^D.\]

Before listing the key properties of the sets $C_{T,\vv}$, we discuss contractions. Consider an ASPT $(T,\vv)$ with symmetry $\sigma$ and a non-leaf edge $e$ of $T$. We may contract the (one- or two-element) set of edges $\{e,\sigma(e)\}$ in $T$, obtaining a tree $T/\{e,\sigma(e)\}$. The leaves of $T/\{e,\sigma(e)\}$ are in natural bijection with the leaves of $T$, allowing us to consider the phylogenetic tree $(T/\{e,\sigma(e)\},\vv)$. We say that $(T/\{e,\sigma(e)\},\vv)$ is obtained from $(T,\vv)$ by the \textit{symmetric contraction} of $e$.

Note that, in terms of subdivisions, a symmetric contraction is either the deletion of one axially symmetric diagonal or of two diagonals that are axially symmetric to each other. Explicitly, an ASPT $(T,\vv)$ is obtained from $\cT_{\Theta,\ph}$ by a symmetric contraction if and only if $(T,\vv)=\cT_{\Theta',\ph}$, where $\Theta'=\Theta\bs\{\delta_1,\delta_2\}$ and $\delta_1$ is the reflection of $\delta_2$ across $\delta_0$. Consequently, ASPTs that cannot be obtained from any other ASPT by a symmetric contraction are precisely those $\cT_{\Theta,\ph}$ for which $\Theta$ is maximal by inclusion among axially symmetric subdivisions. We will refer to such ASPTs as \textit{maximal}. 

If $\Theta$ is maximal and $\delta_0\in\Theta$, then, in view of maximality, each of the halves of $P$ on either side of $\delta_0$ is triangulated by the diagonals in $\Theta$. If $\delta_0\notin\Theta$, then every interior point $p$ of $\delta_0$ either lies in an element of $\Theta$ that is perpendicular to $\delta_0$, or is not contained in any element of $\Theta$. In the latter case consider the polygonal cell of the subdivision containing $p$. This cell is axially symmetric and, by the maximality of $\Theta$, must either be a trapezoid with bases perpendicular to $\delta_0$, or an isosceles triangle with base perpendicular to $\delta_0$. All cells that are not obtained in this way will be triangles, each contained in one of the two halves of $P$ formed by $\delta_0$. We obtain the following classification (see also Figure~\ref{fig:subdivs}):
\begin{prop}\label{prop:ASPTtypes}
Maximal axially symmetric subdivisions are of two types:
\begin{enumerate}[label=(\Roman*)]
\item $\delta_0\in\Theta$ and $\Theta$ is a triangulation,
\item $\delta_0\notin\Theta$, in which case $\Theta$ contains $m\in[1,n-1]$ diagonals that are perpendicular to $\delta_0$ and $m-1$ of its cells are trapezoids that are axially symmetric with respect to $\delta_0$, the remaining cells being triangles.
\end{enumerate}    
\end{prop}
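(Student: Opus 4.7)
The plan is to argue by a case distinction on whether $\delta_0\in\Theta$, in both cases relying on the fact, already recorded above, that every diagonal of $\Theta$ either does not cross $\delta_0$ or is perpendicular to $\delta_0$. When $\delta_0\in\Theta$, this diagonal splits $P$ into two congruent halves interchanged by the reflection across $\delta_0$. Any additional diagonal in one half together with its mirror in the other gives a strictly larger axially symmetric subdivision, so maximality forces each half to be fully triangulated; hence $\Theta$ is a triangulation of $P$, which is type (I).

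When $\delta_0\notin\Theta$, let $m$ denote the number of diagonals of $\Theta$ perpendicular to $\delta_0$. The lower bound $m\ge 1$ holds because otherwise no element of $\Theta$ crosses $\delta_0$, so $\delta_0$ itself could be added to $\Theta$, contradicting maximality; the upper bound $m\le n-1$ is automatic since $P$ has only $n-1$ chords perpendicular to $\delta_0$. These $m$ mutually parallel chords partition $P$ into $m+1$ axially symmetric sub-polygons, which I will call \emph{strips}: two \emph{end strips} containing the endpoints of $\delta_0$ and $m-1$ \emph{middle strips} lying strictly between consecutive perpendicular chords. Within each strip, $\Theta$ restricts to a maximal axially symmetric subdivision having no perpendicular chord in its interior.

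The main technical step, and the most delicate part of the argument, is the classification of cells that can occur inside a single strip. Any cell is either disjoint from $\delta_0$ or axially symmetric, since two distinct mirror-image cells cannot overlap on interiors. A cell disjoint from $\delta_0$ must be a triangle: otherwise a diagonal in its interior together with its mirror would enlarge $\Theta$ symmetrically. For an axially symmetric cell containing a vertex on $\delta_0$ (possible only in an end strip), if it has at least five vertices one can draw from the apex a mirror pair of diagonals to a non-adjacent symmetric pair of vertices; hence maximality reduces such a cell to an isosceles triangle whose base is a perpendicular chord. For an axially symmetric cell with no vertex on $\delta_0$, the axis enters and exits through two of its sides, which must therefore be perpendicular chords; if the cell is a hexagon or larger one can add either a perpendicular chord through a mirror pair of interior vertices or a pair of mirror non-perpendicular diagonals, so maximality reduces the cell to a trapezoid whose bases are perpendicular to $\delta_0$.

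To conclude, I would count along $\delta_0$. Inside a middle strip the axis meets no diagonal of $\Theta$, since its only potential crossings with $\Theta$ are perpendicular chords and none lies in the interior of the strip; hence $\delta_0$ stays inside a single axially symmetric cell, which by the classification is a trapezoid whose bases are the two bounding perpendicular chords. Each end strip contains no trapezoid, since that would require a second interior perpendicular chord of $\Theta$, which does not exist; the only axially symmetric cell there is the apex isosceles triangle, and all remaining cells on either side are triangles by the first subcase. This gives exactly $m-1$ trapezoidal cells with all other cells triangular, as required.
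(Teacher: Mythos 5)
Your proof is correct and follows essentially the same approach as the paper's (which is contained in the paragraph immediately preceding the Proposition): split on whether $\delta_0\in\Theta$, and in the $\delta_0\notin\Theta$ case classify cells according to how they meet $\delta_0$, using maximality to force non-axially-symmetric cells to be triangles and the axially symmetric cells crossed by $\delta_0$ to be trapezoids or apex isosceles triangles. You simply fill in details the paper leaves implicit — the bounds $1\le m\le n-1$, the case analysis on the number of vertices of an axially symmetric cell, and the strip decomposition used to count exactly $m-1$ trapezoids.
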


\begin{figure}[h!tbp]
\centering
\begin{tikzpicture}[scale = 1.3]
\foreach \i in {1,...,8}{
\coordinate (P\i) at ({cos(90-45*(\i-1))},{sin(90-45*(\i-1))});
}

\draw (P1)--(P2)--(P3)--(P4)--(P5)--(P6)--(P7)--(P8)--cycle;
\draw (P1)--(P5) (P1)--(P4) (P1)--(P6) (P4)--(P2) (P6)--(P8);
\end{tikzpicture}
\hspace{15mm}
\begin{tikzpicture}[scale=1.3]
\foreach \i in {1,...,8}{
  \coordinate (P\i) at ({cos(90-45*(\i-1))},{sin(90-45*(\i-1))});
}

\draw (P1)--(P2)--(P3)--(P4)--(P5)--(P6)--(P7)--(P8)--cycle;
\draw (P1)--(P3) (P1)--(P7) (P3)--(P7) (P4)--(P6);
\end{tikzpicture}

\caption{Two maximal axially symmetric subdivisions for $n=4$: of type (I) on the left and of type (II) with $m=2$ on the right.}
\label{fig:subdivs}
\end{figure}

We will also say that a maximal ASPT $\cT_{\Theta,\ph}$ is of type (I) if $\Theta$ has type (I), and of type (II) if $\Theta$ has type (II). A maximal ASPT cannot be of both types: it is of type (II) if and only if it has a vertex that is fixed by the symmetry.

\begin{example}
Recall the setting of Example~\ref{ex:ASPTn=3}. ASPTs of the forms 4, 6 and 7 are maximal, while 1, 2, 3 and 5 can be obtained from others by symmetric contraction. Also, maximal ASPTs of form 7 have type (I), while maximal ASPTs of the forms 4 and 6 have type (II).
\end{example}

Next, for an ASPT $(T,\vv)$ with symmetry $\sigma$, we let $k(T,\vv)$ denote the number of $\sigma$-orbits in the edge set of $T$. 
Equivalently, if $(T,\vv)=\cT_{\Theta,\ph}$, then $k(T,\vv)-n$ is equal to the number of orbits with respect to axial symmetry across $\delta_0$ in the set of diagonals $\Theta$.

\begin{prop}\label{prop:maxk}
An ASPT $(T,\vv)$ is maximal if and only if $k(T,\vv)=2n-1$.    
\end{prop}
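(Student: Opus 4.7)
The plan is to compute $k(T,\vv)$ explicitly for maximal ASPTs via the classification in Proposition~\ref{prop:ASPTtypes}, and then to observe that symmetric contractions strictly decrease $k$.

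First, I would decompose $k(T,\vv)$ into its contributions from leaf and non-leaf edges. The symmetry $\sigma$ swaps $\vv(i)$ and $\vv(\ol i)$, so no leaf edge is fixed, and the $2n$ leaf edges contribute exactly $n$ orbits (each of size $2$). Writing $s$ for the number of diagonals of $\Theta$ that are fixed by reflection across $\delta_0$ and $p$ for the number of mirror-image pairs of diagonals of $\Theta$, this gives
\[k(T,\vv) = n + s + p,\]
so it suffices to show $s + p = n - 1$ when $(T,\vv) = \cT_{\Theta,\ph}$ is maximal and $s + p < n - 1$ otherwise.

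For the forward direction I would split into the two cases of Proposition~\ref{prop:ASPTtypes}. In type (I), $\Theta$ is a triangulation containing $\delta_0$, so $|\Theta| = 2n-3$. Any other self-symmetric diagonal would be perpendicular to $\delta_0$ and would have to cross $\delta_0 \in \Theta$, which is impossible; hence $s = 1$ and $p = (2n-4)/2 = n - 2$, yielding $s+p = n-1$. In type (II), let $d = s + 2p = m + 2p$ be the total number of diagonals, with $s = m$ perpendicular diagonals. By Proposition~\ref{prop:ASPTtypes}, among the $d+1$ cells exactly $m-1$ are trapezoids and the remaining $2p + 2$ are triangles. Double-counting cell–edge incidences (each diagonal lies in $2$ cells and each side of $P$ in one) gives
\[4(m-1) + 3(2p+2) = 2d + 2n = 2(m+2p) + 2n,\]
which simplifies to $m + p = n - 1$, so again $s + p = n - 1$.

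For the reverse direction, any symmetric contraction removes either a self-symmetric orbit $\{e\}$ (when $e = \sigma(e)$) or a size-two orbit $\{e, \sigma(e)\}$; either way, one $\sigma$-orbit disappears from the edge set and the remaining orbits are unchanged, so $k$ drops by $1$. Every non-maximal ASPT $\cT_{\Theta,\ph}$ arises from a maximal ASPT by at least one such contraction: extend $\Theta$ to a maximal axially symmetric subdivision $\Theta'$ by adjoining diagonals (in $\sigma$-orbits) until no further addition is possible, then $\cT_{\Theta,\ph}$ is obtained from $\cT_{\Theta',\ph}$ by contracting the added $\sigma$-orbits of diagonals. This gives $k(T,\vv) < 2n-1$ for every non-maximal ASPT.

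The only mildly technical step is the incidence count in type (II); it hinges on the precise cell structure (specifically, that all non-trapezoidal cells are triangles) provided by Proposition~\ref{prop:ASPTtypes}. Apart from that, the argument is straightforward bookkeeping.
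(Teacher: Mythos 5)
Your proof is correct and follows essentially the same route as the paper: decompose $k(T,\vv)$ into the $n$ orbits from leaf edges plus the axial-symmetry orbits of diagonals, compute these per the two types of Proposition~\ref{prop:ASPTtypes}, and handle non-maximal trees by extending $\Theta$ to a maximal subdivision. The only variation is cosmetic: in type (II) you derive $m+p=n-1$ via a cell--edge incidence count, whereas the paper invokes the fact that a subdivision with $m-1$ quadrilaterals (and otherwise triangles) has $2n-3-(m-1)$ diagonals; both are the same bookkeeping.
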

\begin{proof}
Suppose $(T,\vv)=\cT_{\Theta,\ph}$ is maximal. Then, $\Theta$ is a maximal axially symmetric subdivision. If $\Theta$ is of type (I), then $|\Theta|=2n-3$ and $k(T,\vv)=n+(2n-4)/2+1=2n-1$.

Let $\Theta$ be of type (II) and contain $m$ diagonals that are perpendicular to $\delta_0$. Since $\Theta$ has $m-1$ quadrilateral cells, it consists of $2n-3-(m-1)=2n-m-2$ diagonals, which form $k(T,\vv)=n+m+(2n-m-2-m)/2=2n-1$ axial-symmetry orbits.

Conversely, a non-maximal ASPT $(T,\vv)=\cT_{\Theta,\ph}$ must have $k(T,\vv)<n-1$ because there exists a maximal axially symmetric subdivision that contains $\Theta$.
\end{proof}

We now establish properties of the sets $C_{T,\vv}$ that allow us to define the space of ASPTs.

\begin{prop}\label{prop:spaceofASPTs}
\hfill
\begin{enumerate}[label=(\alph*)]
\item If ASWPTs $(T_1,\vv_1,l_1)$ and $(T_2,\vv_2,l_2)$ are distinct, then $w(T_1,\vv_1,l_1)\neq w(T_2,\vv_2,l_2)$. If ASPTs $(T_1,\vv_1)$ and $(T_2,\vv_2)$ are distinct, then $C_{T_1,\vv_1}$ and $C_{T_2,\vv_2}$ are disjoint.
\item For an ASPT $(T,\vv)$, the set $C_{T,\vv}$ is a relatively open polyhedral cone of dimension $k(T,\vv)$: the product of a simplicial cone of dimension $k(T,\vv)-n$ and an $n$-dimensional affine space.
\item The relatively open faces of the cone $C_{T,\vv}$ are precisely the cones of the form $C_{T',\vv'}$ such that $(T',\vv')$ is obtained from $(T,\vv)$ by a sequence of symmetric contractions.
\item The collection of cones $C_{T,\vv}$ with $(T,\vv)$ ranging over all ASPTs is a polyhedral fan in $\bR^D$ of pure dimension $2n-1$ with a lineality space of dimension $n$.
\end{enumerate}    
\end{prop}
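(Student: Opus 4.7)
The plan is to reduce all four claims to the classical theory of metric trees, keeping careful track of the $\sigma$-orbit structure of edges. First I would fix an ASPT $(T,\vv)$ with symmetry $\sigma$ and parametrize its weightings: a weighting $l$ of $(T,\vv)$ is determined by its values on the $\sigma$-orbits of edges. Since $\vv(i)\ne\vv(\ol i)$, there are exactly $n$ leaf-edge orbits (each of size $2$), whose weights range freely in $\bR$, and $k(T,\vv)-n$ non-leaf orbits, whose weights range in $\bR_{>0}$. The map $l\mapsto w(T,\vv,l)$ is plainly linear in these parameters, so $C_{T,\vv}$ is the image of $\bR^n\times\bR_{>0}^{k(T,\vv)-n}$ under a linear map; once I verify injectivity, (b) follows.

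For (a) and this injectivity, I would invoke the classical four-point condition of Buneman: a weighted tree is uniquely recovered from its leaf-distance function. Since $D$ contains exactly one representative of each orbit $\{(a,b),(b,a),(\ol a,\ol b),(\ol b,\ol a)\}$ and any ASWPT satisfies $d_{T,\vv,l}(a,b)=d_{T,\vv,l}(\ol a,\ol b)$, the point $w(T,\vv,l)$ records $d_{T,\vv,l}$ in full. Therefore the same $w$ cannot come from two distinct ASWPTs, which simultaneously proves injectivity of the parametrization and disjointness of the cones $C_{T_1,\vv_1}$ and $C_{T_2,\vv_2}$ for distinct ASPTs $(T_1,\vv_1)\ne(T_2,\vv_2)$.

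For (c), the relatively open faces of the simplicial factor of $C_{T,\vv}$ correspond to collapsing some non-leaf orbits to zero weight; contracting precisely those orbits produces the ASPT $(T',\vv')$ whose cone realizes the face. Assembling (d) is then straightforward: disjoint interiors follow from (a), face closures from (c), and any $w$ lying in two closures lies in a unique relatively open cone (again by (a)) that is a face of both by (c). The pure dimension $2n-1$ is immediate from Proposition~\ref{prop:maxk}, and the $n$-dimensional common lineality is the symmetric cut-metric subspace $\{w\in\bR^D:w_{a,b}=c_a+c_b,\ c\in\bR^N,\ c_a=c_{\ol a}\}$, which sits inside every $C_{T,\vv}$ because arbitrary changes of leaf-orbit weights translate $C_{T,\vv}$ by exactly this subspace. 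I expect the main technical point to be the rigidity statement in (a): while the Buneman reconstruction is classical for arbitrary phylogenetic trees, one has to check that it is compatible with the $\sigma$-symmetry, so that an ASWPT is recovered as an ASWPT. This is largely bookkeeping, but deserves explicit verification.
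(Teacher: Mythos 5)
Your approach matches the paper's in all essentials: parametrize $C_{T,\vv}$ via the $\sigma$-orbits of edges, invoke Buneman's reconstruction theorem for injectivity and disjointness, realize faces by symmetric contractions, and assemble the fan. There is, however, one genuine gap in your application of Buneman. The classical result (cited as~\cite{Bun1974}) reconstructs a \emph{metric} tree from its pairwise leaf distances, i.e., it assumes all edge weights are strictly positive. ASWPTs allow arbitrary real weights on leaf edges, so Buneman does not apply directly. The paper fills this by a shift: replacing $l$ with $l^C$, which adds a constant $C$ to every leaf weight, shifts every pairwise distance by exactly $2C$; taking $C$ large makes both competing ASWPTs metric, and Buneman can then be applied to the shifted trees. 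Your proposal omits this reduction entirely.

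Relatedly, the technical concern you do flag --- that Buneman reconstruction should ``be compatible with $\sigma$-symmetry, so that an ASWPT is recovered as an ASWPT'' --- is not where the difficulty lies. Once uniqueness of the underlying weighted phylogenetic tree is established, the two candidate ASWPTs have the same tree, labeling, and weighting, hence are the same ASWPT; their symmetries agree automatically since a tree automorphism is determined by its action on leaves. The positivity hypothesis in Buneman is the step that actually requires argument. Apart from this, your proposal is correct and follows the paper's route closely; in particular, your identification of the $n$-dimensional lineality space as the symmetric cut-metric subspace $\{w : w_{a,b}=c_a+c_b,\ c_a=c_{\ol a}\}$ agrees with the description the paper gives later (as $C_{T,\vv}$ for the star tree).
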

\begin{proof}
For the first claim in part (a), let $l_i^C$, $i\in\{1,2\}$ denote the weighting obtained from $l_i$ by adding $C\in\bR$ to the weights of all leaf edges. Note that 
\[d_{T_i,\vv_i,l_i^C}(a,b)=d_{T_i,\vv_i,l_i}(a,b)+2C\] 
for distinct $a,b\in N$. Therefore, $w(T_1,\vv_1,l_1)\neq w(T_2,\vv_2,l_2)$ if and only if $w(T_1,\vv_1,l_1^C)\neq w(T_2,\vv_2,l_2^C)$ for some $C\in\bR$. For large enough $C$, both ASWPTs $(T_i,\vv_i,l_i^C)$ will be \textit{metric}, meaning that all of the weights will be positive. However, it is easily checked by induction that a metric tree with no vertices of degree 2 is determined by the pairwise distances between its leaves, see also \cite{Bun1974}. The second claim in (a) follows from the first.

For part (b), consider an edge $e$ of $T$ and let $l_e$ denote the function on the edge set of $T$ such that $l_e(e)=l_e(\sigma(e))=1$ (where $\sigma$ is the symmetry of $(T,\vv)$) and $l_e(e')=0$ for all other edges $e'$. Although $(T,\vv,l_e)$ is, in general, not a weighted phylogenetic tree, we may still define the function $d_{T,\vv,l_e}$ and the point $w(T,\vv,l_e)\in\bR^D$ as above. Let $E$ be a subset of the edge set containing exactly one representative of each $\sigma$-orbit. The weightings of $(T,\vv)$ that define ASWPTs are precisely linear combinations $\sum_{e\in E} c_el_e$ such that $c_e>0$ for all non-leaf edges $e\in E$. Consequently, $C_{T,\vv}$ consists of linear combinations $\sum_{e\in E} c_ew(T,\vv,l_e)$ with $c_e>0$ for all non-leaf edges $e\in E$. Furthermore, the vectors $w(T,\vv,l_e)$, $e\in E$ are linearly independent, otherwise we would have two distinct strictly positive weightings of $(T,\vv)$ that provide the same distance function. Part (b) follows. 

Suppose that for a subset $E'\subset E$ consisting of non-leaf edges, $(T',\vv')$ is obtained from $(T,\vv)$ by contracting the $\sigma$-invariant set of edges $\{e,\sigma(e)\}_{e\in E'}$. Then $C_{T',\vv'}$ consists precisely of linear combinations $\sum_{e\in E\bs E'} c_ew(T,\vv,l_e)$ such that $c_e>0$ for non-leaf edges $e$. Thus, $C_{T',\vv'}$ is a face of $C_{T,\vv}$ and, since $C_{T,\vv}$ has $2^{k(T,\vv)-n}$ faces, we obtain part (c).

Part (d) follows from the previous parts and Proposition~\ref{prop:maxk}.
\end{proof}

We refer to the polyhedral fan formed by the cones $C_{T,\vv}$ with $(T,\vv)$ ranging over all ASPTs as the \textit{space of ASPTs}. We can now state our first main result.
\begin{theorem}\label{thm:main}
The type C tropical cluster variety $\Trop X$ is the space of ASPTs. In particular, its support $|\Trop X|$ is the set of points $w(T,\vv,l)$ with $(T,\vv,l)$ ranging over all ASWPTs.
\end{theorem}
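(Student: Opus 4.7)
The plan is to establish equality of the two polyhedral fans in $\bR^D$ by separately proving equality of supports and then matching the cone structures. I would organize the argument in three stages.

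First, for the inclusion $|\text{space of ASPTs}|\subseteq|\Trop I|$, by Proposition~\ref{prop:spaceofASPTs}(c) every cone $C_{T,\vv}$ is a face of some $C_{T',\vv'}$ with $(T',\vv')$ a maximal ASPT. Since $|\Trop I|$ is closed, it suffices to place relative-interior points of maximal cones inside $|\Trop I|$. For such a weight $w=w(T,\vv,l)$, I would invoke Lemma~\ref{lem:initinker}, which provides a monomial-free (indeed toric) ideal containing $\init_w I$; this immediately yields $w\in|\Trop I|$.

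Second, for the reverse inclusion $|\Trop I|\subseteq|\text{space of ASPTs}|$, let $w\in|\Trop I|$. The quadratic generators $r_{a,b,c,d}\in I$ from Proposition~\ref{prop:Igens} have non-monomial initial forms, which enforces the tropical Plücker condition that, for every four-element subset of $N$, the maximum among $w_{a,b}+w_{c,d}$, $w_{a,c}+w_{b,d}$, $w_{a,d}+w_{b,c}$ is attained at least twice. Applying the Speyer--Sturmfels description of $\Trop\Gr(2,2n)$ to the leaf set $N$, there exists a weighted phylogenetic tree $(T,\vv,l)$ whose induced leaf-distance function recovers $w$. Property (i) of Lemma~\ref{lem:keylemma} is automatic from the convention $w_{a,b}=w_{\ol a,\ol b}$ inherent in the indexing by $D$. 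Property (ii) must be extracted from the cubic relations $s_{i,j,k}\in I$ alluded to in the introduction (equation~\eqref{eq:sijk}): each $s_{i,j,k}$ should be built so that its six monomial terms realize exactly the four sums in Lemma~\ref{lem:keylemma}(ii), whence $\init_w s_{i,j,k}$ being non-monomial forbids any one of those sums from strictly dominating the others. Lemma~\ref{lem:keylemma} then delivers that $(T,\vv,l)$ is an ASWPT, so $w=w(T,\vv,l)\in C_{T,\vv}$.

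Finally, for the matching of cone structures, I would show that $\init_w I$ is constant as $w$ varies over the relative interior of a given $C_{T,\vv}$ and that different $C_{T,\vv}$ yield different initial ideals. In the maximal case the former follows from Lemma~\ref{lem:initinker}, since the containing toric ideal depends only on the combinatorial type of $(T,\vv)$; non-maximal cones are handled by face compatibility via Proposition~\ref{prop:spaceofASPTs}(c). Distinctness across cones follows from Proposition~\ref{prop:spaceofASPTs}(a), which guarantees that distinct ASWPTs yield distinct distance vectors. The principal difficulty is of course concentrated in the two combinatorial Lemmas~\ref{lem:initinker} and~\ref{lem:keylemma}, together with the construction and verification of the cubic relations $s_{i,j,k}$: each of these requires a delicate case analysis on subdivisions of the $2n$-gon and on phylogenetic trees, which is the reason the paper defers them to its final section.
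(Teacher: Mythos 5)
Your overall architecture matches the paper's: inclusion of the space of ASPTs into $|\Trop I|$ via Lemma~\ref{lem:initinker}, the reverse inclusion via the four-point condition on the quadrics plus a Buneman/Speyer--Sturmfels tree realization plus Lemma~\ref{lem:keylemma} driven by the cubics $s_{i,j,k}$, and then a cone-matching step. (Two small points: the $s_{i,j,k}$ have four monomial terms, not six; and you should first reduce to $\wh w$ being a metric by adding a multiple of the all-ones vector, since the tree-realization theorem is stated for metrics.)

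The genuine gap is in your third stage. You assert that constancy of $\init_w I$ over the relative interior of $C_{T,\vv}$ ``follows from Lemma~\ref{lem:initinker}, since the containing toric ideal depends only on the combinatorial type.'' But Lemma~\ref{lem:initinker} only gives a \emph{containment} $\init_w I\subseteq\ker\psi_{T,\vv}$, and this containment is strict for many maximal $(T,\vv)$ (precisely when $T$ has a degree-$4$ vertex, by Theorem~\ref{thm:toricdegens}); a varying subideal of a fixed ideal can certainly vary. The paper instead uses Corollary~\ref{cor:gbasis}: the quadrics $r_{a,b,c,d}$ form a Gr\"obner basis with respect to any $w\in C_{T,\vv}$, and each $\init_w r_{a,b,c,d}$ is determined by the subtree $T_{a,b,c,d}$, hence only by $(T,\vv)$ and not by the weighting $l$; this is also what lets one push the Gr\"obner-basis property from the $W$-normalized trees of Lemma~\ref{lem:initinker} to all $(T,\vv)$ and from maximal cones to their faces by semicontinuity. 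Similarly, your claim that ``distinctness across cones follows from Proposition~\ref{prop:spaceofASPTs}(a)'' is a non sequitur: that proposition only says distinct ASWPTs give distinct weight vectors, whereas distinct weight vectors routinely lie in the same Gr\"obner cone and give equal initial ideals. The paper settles distinctness by reducing to a pair $(T_1,\vv_1)$, $(T_2,\vv_2)$ related by a single symmetric contraction of an edge $e$, choosing $a,b,c,d$ whose paths separate the two endpoints of $e$, and observing that $\init_{w_1} r_{a,b,c,d}$ is a binomial while $\init_{w_2} r_{a,b,c,d}=r_{a,b,c,d}$; a monomial-free ideal cannot contain both. Your write-up would need some such explicit mechanism to close the argument.
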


The theorem will be proved in Subsection~\ref{sec:mainproof}.

\begin{example}
For $n=3$ the space of ASPTs has dimension $5$ and a lineality space of dimension 3. Thus, topologically, it is the product of $\bR^3$ and a cone over a 1-dimensional simplicial complex. This simplicial complex is depicted in~\cite[Figure A]{CoxMakh2025}. It has 13 vertices (corresponding to 4-dimensional cones in the fan, enumerated by ASPTs of the forms 2, 3 and 5 in Example~\ref{ex:ASPTn=3}) and 21 edges (corresponding to maximal cones, enumerated by ASPTs of the forms 4, 6 and 7).
\end{example}

\begin{remark}
After this paper was written, the reference~\cite{CaLeYu2025} was brought to the author's attention. It studies a fan whose cones are indexed by a similar, but apparently smaller, family of trees. Moreover, small cases suggest that the fan in~\cite{CaLeYu2025} may be combinatorially equivalent to a subfan of the space of ASPTs. It should be very interesting to discern the exact relationship between the two constructions. 
\end{remark}

\section{Proof of the main theorem and toric degenerations}

Throughout this section we assume that all considered subdivisions and labelings of $P$ are axially symmetric, unless stated otherwise.

\subsection{Monomial-freeness and the containing toric ideal}

Theorem~\ref{thm:main} can be split into several claims as follows. 
\begin{enumerate}[label = (C\arabic*)]
\item For every ASWPT $(T,\vv,l)$, the initial ideal $\init_{w(T,\vv,l)} I$ is monomial-free.
\item For ASWPTs $(T_1,\vv_1,l_1)$ and $(T_2,\vv_2,l_2)$, the initial ideals $\init_{w(T_1,\vv_1,l_1)} I$ and $\init_{w(T_2,\vv_2,l_2)} I$ coincide if and only if $(T_1,\vv_1)=(T_2,\vv_2)$.
\item If $w\in\bR^D$ is not equal to $w(T,\vv,l)$ for any ASWPT $(T,\vv,l)$, then $\init_w I$ contains a monomial.
\end{enumerate}

We start with (C1), i.e., monomial-freeness. The difficulty here is that, unlike the type A case considered in~\cite{SpeStu2004}, the initial ideals corresponding to maximal cones of $\Trop I$ are not necessarily toric or even prime. 

We make two general observations that allow us to focus on ASPTs of a specific form, significantly simplifying our arguments. First, by Proposition~\ref{prop:spaceofASPTs}(c), the set of points $w(T,\vv,l)$ with $(T,\vv)$ maximal is dense in the set of all $w(T,\vv,l)$. Hence, it suffices to show that $\init_{w(T,\vv,l)} I$ is monomial-free when $(T,\vv)$ is a maximal ASPT.

Next, observe that the group $W$ acts on the set of ASPTs by $\tau\colon(T,\vv)\mapsto(T,\vv\circ\tau^{-1})$, and similarly for ASWPTs. For an ASWPT $(T,\vv,l)$ and $\tau\in W$, we have 
\begin{equation}\label{eq:taurespectsw}
\tau(w(T,\vv,l)) = w(T,\vv\circ\tau^{-1},l),
\end{equation}
i.e., $w$ is $W$-equivariant as a function from the set of ASWPTs to $\bR^D$. 
Furthermore, $W$ acts on the set of axially symmetric labelings of $P$ by $\tau\colon\ph\to\ph\circ\tau^{-1}$. For any $\tau\in W$, subdivision $\Theta$ and labeling $\ph$ of $P$, one has
\begin{equation}\label{eq:actonlabeling}
\tau(\cT_{\Theta,\ph})=\cT_{\Theta,\ph\circ\tau^{-1}}.    
\end{equation}

Let $(T,\vv)=\cT_{\Theta,\ph}$ be a maximal ASPT . Consider a labeling $\ph_0$ of $P$ such that the sides of $P$ on one side of $\delta_0$ are labeled by the elements $1,\dots,n$ in counterclockwise order, while those on the other side of $\delta_0$ are labeled by $\ol 1,\dots,\ol n$ in clockwise order. In view of~\eqref{eq:actonlabeling}, for any ASPT $(T,\vv)=\cT_{\Theta,\ph}$, we may find $\tau\in W$ such that $\tau((T,\vv))=\cT_{\Theta,\ph_0}$. By Corollary~\ref{cor:fansWinv} and \eqref{eq:taurespectsw}, $w(T,\vv,l)$ lies in $|\Trop I|$ if and only if $w(T,\vv\circ\tau^{-1},l)$ does. 

We have reduced the proof of monomial-freeness to the case of an ASWPT $(T,\vv,l)$ such that $(T,\vv)$ is maximal and has the form $\cT_{\Theta,\ph_0}$. Our approach is to realize $\init_{w(T,\vv,l)} I$ as a subideal of a toric ideal, i.e., the kernel of a monomial map.

We establish two properties of ASPTs of this form by analyzing their combinatorial structure. Recall from Proposition~\ref{prop:ASPTtypes} that $\Theta$ and $(T,\vv)$ have either type (I) or type (II). Let $\sigma$ be the symmetry of $(T,\vv)$. We also let $\vpath(u,v)$ denote the set of vertices lying on the path between vertices $u$ and $v$ of $T$, including $u$ and $v$.

\begin{prop}\label{prop:maxASPTprops}
Let $(T,\vv)$ be a maximal ASPT of the form $\cT_{\Theta,\ph_0}$.
\hfill\begin{enumerate}[label = (\alph*)]
\item If $\vpath(\vv(a),\vv(b))$ contains exactly one $\sigma$-fixed vertex, then exactly one of $a$ and $b$ lies in $[1,n]$.
\item For any $i_1,i_2,j_1,j_2\in[1,n]$, all vertices in $\vpath(\vv(i_1),\vv(i_2))\cap\vpath(\vv(\ol{j_1}),\vv(\ol{j_2}))$ are $\sigma$-fixed.
\end{enumerate}    
\end{prop}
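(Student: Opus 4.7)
The plan is to work directly with the geometry of the subdivision $\Theta$ of $P$ and the longest diagonal $\delta_0$, translating the tree-theoretic statements into facts about how cells of $\Theta$ meet $\delta_0$. Under the labeling $\ph_0$, the leaves of $T$ labeled by $[1,n]$ correspond to sides of $P$ strictly above $\delta_0$ and those labeled by $\{\ol 1,\dots,\ol n\}$ to sides strictly below; and a cell of $\Theta$ is a $\sigma$-fixed vertex of $T$ iff it is axially symmetric w.r.t.\ $\delta_0$, iff it meets $\delta_0$ in a positive-length segment. I would first establish the following preparatory facts. (1) In Type~I no cell is axially symmetric (since $\delta_0\in\Theta$ keeps every cell off $\delta_0$), while in Type~II the axially symmetric cells are exactly the $m+1$ cells containing a segment of $\delta_0$ --- two end-triangles with apex at an endpoint of $\delta_0$ and $m-1$ middle trapezoids. (2) Every axially symmetric cell has exactly one ``upper'' side strictly above $\delta_0$ and one ``lower'' side (its reflection), the remaining sides being perpendicular diagonals in $\Theta$; the neighbor across a perpendicular diagonal is itself axially symmetric (since $\sigma$ fixes the diagonal and hence sends its adjacent cell to itself), while the neighbor across the upper (resp.\ lower) side is either a strictly above (resp.\ below) cell or a leaf labeled in $[1,n]$ (resp.\ $\{\ol 1,\dots,\ol n\}$). (3) In Type~II, no diagonal of $\Theta$ can sandwich $\delta_0$ (else it or its axial reflection would cross $\delta_0$), so two cells strictly on opposite sides of $\delta_0$ are never adjacent in $T$; equivalently, any walk in $T$ from a strictly above cell (or above leaf) to a strictly below cell (or below leaf) must pass through an axially symmetric cell. (4) Combining (1)--(3), the subgraph $F\subseteq T$ formed by the $\sigma$-fixed vertices and the $\sigma$-fixed edges joining them is a single path --- the $m+1$ axially symmetric cells strung along the $m$ perpendicular diagonals --- and is in particular convex in $T$.

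For part (a), let $c_0$ denote the unique $\sigma$-fixed vertex on the path; by (1) we are in Type~II. The path enters and leaves $c_0$ via two distinct edges of $T$, neither of which can be a perpendicular edge, since by (2) the neighbor across such an edge is itself $\sigma$-fixed, giving a second $\sigma$-fixed vertex on the path. Hence the two edges are the upper and lower sides of $c_0$. Beyond $c_0$, the path along the upper side continues only through strictly above cells (further axially symmetric cells are excluded by the uniqueness of $c_0$, and strictly below cells by (3)), and terminates at a leaf whose label lies in $[1,n]$; symmetrically, the lower side leads to a leaf with label in $\{\ol 1,\dots,\ol n\}$. Thus exactly one of $a,b$ lies in $[1,n]$.

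For part (b), suppose $v$ lies in both paths but is not $\sigma$-fixed. Above and below leaf labels are disjoint, so $v$ cannot be a leaf; hence $v$ is a non-leaf cell, WLOG strictly above $\delta_0$. Since $v$ lies on the path between the two below leaves $\vv(\ol{j_1})$ and $\vv(\ol{j_2})$, by (3) each of the two subpaths into which $v$ splits it must contain a $\sigma$-fixed vertex. Let $u_1$ (resp.\ $u_2$) be the $\sigma$-fixed vertex closest to $v$ on the first (resp.\ second) subpath; then $u_1,u_2\in F$ and the unique tree-path from $u_1$ to $u_2$ in $T$ passes through $v$. But $F$ is a subpath of $T$ by (4), hence convex, so the tree-path from $u_1$ to $u_2$ is contained in $F$, contradicting $v\notin F$. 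Type~I is handled separately: removing the sole $\sigma$-fixed edge of $T$ (the one corresponding to $\delta_0\in\Theta$) splits $T$ into two components, one containing all above leaves and strictly above cells, the other the below ones; the two paths lie in disjoint components, and the intersection is empty.

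The main obstacle is preparatory step (4), verifying that the $\sigma$-fixed subgraph $F$ is a single path. This rests on the geometric observation that axially symmetric cells are genuinely strung along $\delta_0$ and connected only by perpendicular diagonals, which in turn depends on the finer structural description in (2) --- each axially symmetric cell has exactly one upper side, one lower side, and the rest perpendicular, with the adjacencies as asserted. Once this structural picture is pinned down, both parts reduce to routine path/convexity arguments in trees.
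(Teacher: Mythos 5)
Your proof is correct and, though phrased in terms of cells and diagonals of the subdivision rather than abstract tree components, follows the same line as the paper's: both rest on observing that in Type~II the $\sigma$-fixed vertices form a chain (the cells strung along $\delta_0$) whose deletion separates $T$ into halves carrying the $[1,n]$ and $\{\ol 1,\dots,\ol n\}$ leaves respectively, while in Type~I the single $\sigma$-fixed edge $e_0$ plays the same separating role. Your worry about step~(4) is unfounded: the degree count you implicitly use (end cells have one perpendicular side, middle cells two, and $T$ has no cycles) does pin $F$ down as a path, and in any case for part~(b) one only needs convexity of the $\sigma$-fixed vertex set, which follows at once from $\sigma$ being a tree automorphism fixing both endpoints of the path from $u_1$ to $u_2$.
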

\begin{proof}
If $(T,\vv)$ has type (I), then $T$ has a distinguished edge $e_0$ corresponding to the diagonal $\delta_0$. The deletion of $e_0$ from $T$ produces two components which are exchanged by $\sigma$. In view of the realization $(T,\vv)=\cT_{\Theta,\ph_0}$, all leaves $\vv(i)$ with $i\in[1,n]$ lie in one of these component, while all $\vv(\ol i)$ with $i\in[1,n]$ lie in the other. Hence, both (a) and  (b) are trivial in this case.

Suppose that $(T,\vv)$ has type (II) and $\Theta$ contains $m$ diagonals perpendicular to $\delta_0$. We denote the $\sigma$-fixed vertices of $T$ by $v_1,\dots,v_{m+1}$ with $v_k$ adjacent to $v_{k+1}$. These vertices correspond to cells of $\Theta$ adjacent to diagonals perpendicular to $\delta_0$. The deletion of all $\sigma$-fixed vertices produces $2m+2$ components, with each $v_k$ adjacent to two of these components: $T_k$ and $\sigma(T_k)$. In view of the realization $(T,\vv)=\cT_{\Theta,\ph_0}$, we may assume that every $T_k$ only contains leaves of the form $\vv(i)$, $i\in[1,n]$.  
We see that $\vpath(\vv(a),\vv(b))$ contains exactly one $\sigma$-fixed vertex if and only if $\vv(a)\in T_k$ and $\vv(b)\in\sigma(T_k)$ for some $k$ (or vice versa). This proves part (a). 

Now, in the setting of part (b), let $\vv(i_1)\in T_{k_1}$ and $\vv(i_2)\in T_{k_2}$. If $k_1=k_2$, then $\vpath(\vv(i_1),\vv(i_2))$ is contained within $T_{k_1}$. Otherwise, $\vpath(\vv(i_1),\vv(i_2))$ starts with several vertices of $T_{k_1}$, then passes through at least two $\sigma$-fixed vertices and ends with several vertices of $T_{k_2}$. A similar description applies to $\vpath(\vv(\ol j_1),\vv(\ol j_2))$, but, instead, featuring subtrees of the form $\sigma(T_k)$. Part (b) follows.
\end{proof}


Consider the polynomial ring $Q_{T,\vv}$ in $k(T,\vv)=2n-1$ variables, one for every $\sigma$-orbit in the edge set of $T$. For an edge $e$ we denote the variable corresponding to the orbit of $e$ by $t_e$, so that $t_e=t_{\sigma(e)}$. We define a homomorphism $\psi_{T,\vv}\colon R\to Q_{T,\vv}$ by setting
\begin{equation}\label{eq:xidef}
\psi_{T,\vv}(x_{a,b})=c_{T,\vv}(a,b)\prod_{e\in\Path(\vv(a),\vv(b))} t_e,
\end{equation}
where $c_{T,\vv}(a,b)=2$ if $\vpath(\vv(a),\vv(b))$ contains exactly one $\sigma$-fixed vertex, $c_{T,\vv}(a,b)=\mathrm i$ (the imaginary unit) if both $a,b\in[1,n]$ or both $a,b\notin[1,n]$, and $c_{T,\vv}(a,b)=1$ in all other cases. The coefficients $c_{T,\vv}(a,b)$ are well-defined in view of Proposition~\ref{prop:maxASPTprops}(a).

\begin{lemma}\label{lem:initinker}
Let $(T,\vv)$ be a maximal ASPT of the form $\cT_{\Theta,\ph_0}$. If $w\in C_{T,\vv}$, then  $\init_w I$ is contained in $\ker\psi_{T,\vv}$ and, in particular, $\init_w I$ is monomial-free.   
\end{lemma}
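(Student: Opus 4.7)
The plan is to verify the inclusion $\init_w I \subseteq \ker \psi_{T,\vv}$ in three stages: establishing $w$-homogeneity of $\ker \psi_{T,\vv}$, checking the vanishing of $\psi_{T,\vv}$ on the initial form of each generator $r_{a,b,c,d}$, and bridging from there to all of $\init_w I$. The monomial-freeness clause will be automatic, since $\psi_{T,\vv}$ sends each variable $x_{a,b}$ to a nonzero monomial, so $\ker \psi_{T,\vv}$ itself contains no monomials.

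For the first stage, I would write $w = w(T,\vv,l)$ and endow $Q_{T,\vv}$ with the grading $\deg t_e = l([e])$ (where $[e]$ denotes the $\sigma$-orbit of $e$). Then $\psi_{T,\vv}(x_{a,b})$ is homogeneous of degree $\sum_{[e]} l([e]) \cdot |\Path(\vv(a),\vv(b)) \cap [e]| = w_{a,b}$, making $\psi_{T,\vv}$ a graded ring homomorphism, so $\ker \psi_{T,\vv}$ is $w$-homogeneous. For the second stage, fix $a \prec b \prec c \prec d$; the four-point condition on the weighted tree $(T,\vv,l)$ guarantees that among the sums $w_{a,b} + w_{c,d}$, $w_{a,c} + w_{b,d}$, $w_{a,d} + w_{b,c}$, exactly two are equal and maximal---they correspond to the two ``crossing'' pairings of the quartet topology of $\{\vv(a),\vv(b),\vv(c),\vv(d)\}$ in $T$---and the third is strictly smaller. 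Hence $\init_w r_{a,b,c,d}$ is a binomial. Its two constituent monomials have the same product of $t_e$'s under $\psi_{T,\vv}$, since the two crossing pairings traverse the quartet's internal edges with equal multiplicity modulo $\sigma$. What remains is to verify that the coefficients $c_{T,\vv}(\cdot, \cdot)$, combined with the $\pm 1$ signs in $r_{a,b,c,d}$, produce cancellation. This is a case analysis on the quartet topology and on how many of $a, b, c, d$ lie in $[1,n]$; Proposition~\ref{prop:maxASPTprops} is the key input, controlling both when $c_{T,\vv} = 2$ arises (part (a)) and how $\sigma$-fixed vertices distribute on the relevant paths (part (b)).

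For the third stage, deducing $\init_w I \subseteq \ker \psi_{T,\vv}$ from vanishing on the generators reduces to showing that $\{r_{a,b,c,d}\}$ is a Gr\"obner basis of $I$ with respect to $w$. Setting $\tilde J = (\init_w r_{a,b,c,d})$, the inclusion $\tilde J \subseteq \init_w I$ automatically yields $\grdim \tilde J \leq \grdim \init_w I = \grdim I$ (both ideals being standard-graded), so it suffices to prove $\grdim \tilde J \geq \grdim I$. To that end, I would take further initial forms of $\tilde J$ with respect to the weights $w''$ from Lemma~\ref{lem:ItorI} and $w'$ from Lemma~\ref{lem:ImonItor}, and check that the resulting monomial ideal contains $I'_\mon$, whose graded dimension equals $\grdim I$ by Lemma~\ref{lem:ImonI}. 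The hard part will be the case analysis in the second stage: while the $t$-monomial coincidence is a clean combinatorial consequence of the tree geometry, the coefficient cancellation requires separately verifying each configuration of quartet topology and half-membership, each time leveraging the structural constraints of Proposition~\ref{prop:maxASPTprops}.
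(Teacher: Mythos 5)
Your proposal follows the same overall strategy as the paper: reduce to the generators $r_{a,b,c,d}$, check each $\init_w r_{a,b,c,d}$ lies in $\ker\psi_{T,\vv}$, and establish the Gr\"obner-basis property by a graded-dimension comparison through the monomial ideal $I'_\mon$. (Your Stage 1 observation that $\ker\psi_{T,\vv}$ is $w$-homogeneous is correct but is not actually needed once the Gr\"obner-basis property is in hand; the paper does not invoke it.) There is, however, a genuine gap in Stage 2.

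You assert that the four-point condition forces exactly two of the three pairings to be maximal, ``hence $\init_w r_{a,b,c,d}$ is a binomial.'' This is false in general for a maximal ASPT of type (II). When the minimal subtree $T_{a,b,c,d}$ spanned by the four leaves has a \emph{degree-$4$} vertex (which happens precisely when some $\sigma$-fixed vertex of $T$ has degree $4$ and the quartet separates around it, i.e.\ for type (II) trees with at least two diagonals perpendicular to $\delta_0$), all three sums $w_{a,b}+w_{c,d}$, $w_{a,c}+w_{b,d}$, $w_{a,d}+w_{b,c}$ are \emph{equal}, so $\init_w r_{a,b,c,d}$ is the full trinomial $r_{a,b,c,d}$. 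This degenerate case is precisely the reason the coefficients $c_{T,\vv}(a,b)$ in the definition~\eqref{eq:xidef} involve the imaginary unit and the factor $2$: one must then verify a three-term cancellation such as
\[
c_{T,\vv}(a,b)\,c_{T,\vv}(c,d)+c_{T,\vv}(a,d)\,c_{T,\vv}(b,c)-c_{T,\vv}(a,c)\,c_{T,\vv}(b,d)
= \mathrm i\cdot 1 + 1\cdot\mathrm i - \mathrm i\cdot 2 = 0,
\]
using Proposition~\ref{prop:maxASPTprops}(a) to control which $c_{T,\vv}$'s equal $2$. In your Stage 3 you also need that the monomial $x_{a,c}x_{b,d}$ actually appears in $\init_w r_{a,b,c,d}$ (so that $\init_{w'}(\init_w r_{a,b,c,d})=\pm x_{a,c}x_{b,d}$ and $\init_{w'}$ of your ideal contains $I'_\mon$); this too must be checked in the degree-$4$ case, though there it is trivial since the full trinomial survives. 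So the plan is sound, but the missing degree-$4$ case is not a footnote: it is the hardest part of the calculation (the paper spends most of the proof of Lemma~\ref{lem:initforms} on it and on the intricate sign bookkeeping in the binomial cases), and is where the curious coefficients $c_{T,\vv}$ earn their keep.
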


As with Lemma~\ref{lem:keylemma}, the proof of Lemma~\ref{lem:initinker} is given separately in Section~\ref{sec:initinkerproof} due to its length and technicality. There is, however, one useful corollary of the proof that we state now (and prove in Section~\ref{sec:initinkerproof}).
\begin{cor}\label{cor:gbasis}
For $(T,\vv)$ as in Lemma~\ref{lem:initinker}, the expressions $r_{a,b,c,d}$ form a Gr\"obner basis of $I$ with respect to any $w\in C_{T,\vv}$.
\end{cor}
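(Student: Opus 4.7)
The plan is to establish the Gröbner basis property $J_w = \init_w I$ (where $J_w := (\init_w r_{a,b,c,d})_{a\prec b\prec c\prec d}$) by a graded dimension argument carried out alongside the proof of Lemma~\ref{lem:initinker}. The inclusion $J_w \subseteq \init_w I$ follows immediately from Proposition~\ref{prop:Igens}, so only the reverse inclusion requires work. Since $I$ is homogeneous and $\grdim \init_w I = \grdim I$, the equality reduces to showing $\grdim J_w \geq \grdim I$.

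For $w \in C_{T,\vv}$, the weights $w_{a,b} = d_{T,\vv,l}(a,b)$ form a tree metric, so the four-point condition forces each $\init_w r_{a,b,c,d}$ to be either the full trinomial $r_{a,b,c,d}$ or a two-term binomial whose support is determined by the quartet of $\{\vv(a), \vv(b), \vv(c), \vv(d)\}$ in $T$. My plan is to produce a combinatorial basis of $R/J_w$: identify a set $B \subseteq R$ of monomials whose images under $\psi_{T,\vv}$ form a monomial basis of the toric image ring $\psi_{T,\vv}(R) \cong R/\ker\psi_{T,\vv}$, and verify that every monomial of $R$ can be reduced modulo $J_w$ to a $\bC$-linear combination of elements of $B$. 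This yields $\dim_\bC(R/J_w)_d \leq |B_d|$ in each graded piece, hence $\grdim J_w \geq \grdim \ker\psi_{T,\vv}$; combined with the containment $\init_w I \subseteq \ker\psi_{T,\vv}$ from Lemma~\ref{lem:initinker} and the equality $\grdim I = \grdim \init_w I$, we would get $\grdim I \leq \grdim \ker\psi_{T,\vv}$ and $\grdim I \leq \grdim J_w \leq \grdim \init_w I = \grdim I$, forcing equalities throughout and implying, as a bonus, that $\grdim \ker\psi_{T,\vv} = \grdim I$.

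The main obstacle is the explicit reduction procedure: for each monomial $M \notin B$, producing a straightening relation in $J_w$ — a specific $\bC$-linear combination of monomial multiples of the $\init_w r_{a,b,c,d}$ that rewrites $M$ in terms of $B$. The existence and structure of such reductions are dictated by the combinatorics of $(T, \vv) = \cT_{\Theta,\ph_0}$; in particular, the quartet realized by each quadruple $a \prec b \prec c \prec d$ in the ASPT determines which binomial form $\init_w r_{a,b,c,d}$ takes and hence what straightening it can perform. The quartet-by-quartet analysis that underlies the proof of Lemma~\ref{lem:initinker} in Section~\ref{sec:initinkerproof}, exploiting the axial symmetry and the specific labeling $\ph_0$ (via Propositions~\ref{prop:ASPTtypes} and~\ref{prop:maxASPTprops}), is precisely what supplies these reductions. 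Consequently, the Gröbner basis property stated in the corollary is a natural byproduct of that combinatorial analysis and requires no substantial additional input.
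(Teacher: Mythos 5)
The proposed reduction step cannot work, and in fact would establish a false statement. You plan to produce a set $B$ of monomials whose $\psi_{T,\vv}$-images give a basis of $\psi_{T,\vv}(R)\cong R/\ker\psi_{T,\vv}$, then reduce every monomial of $R$ mod $J_w$ to a linear combination of $B$. If such a reduction existed, the chain $J_w\subseteq\init_w I\subseteq\ker\psi_{T,\vv}$ together with $\grdim J_w\geq\grdim\ker\psi_{T,\vv}$ would force $J_w=\init_w I=\ker\psi_{T,\vv}$ in every case. But by Theorem~\ref{thm:toricdegens}, $\init_w I$ is strictly smaller than $\ker\psi_{T,\vv}$ whenever $T$ has a vertex of degree at least $4$ (which occurs for all maximal ASPTs of type~(II) with more than two symmetry-fixed vertices; e.g.\ $n=3$, form~4 of Example~\ref{ex:ASPTn=3}). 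In that regime $\ker\psi_{T,\vv}$ has strictly larger graded dimension than $\init_w I$ in some degree, so no set of lifted monomials of cardinality $\grdim(R/\ker\psi_{T,\vv})$ can span $R/J_w\twoheadleftarrow R/\init_w I$. Your ``bonus'' conclusion $\grdim\ker\psi_{T,\vv}=\grdim I$ is exactly the false statement that reveals the problem.

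The correct comparison ideal is not $\ker\psi_{T,\vv}$ but the monomial ideal $I'_\mon$ of Subsection~\ref{sec:monideal}. The paper's argument (inside the proof of Lemma~\ref{lem:initinker}) takes the initial ideal of $J_w:=\langle\init_w r_{a,b,c,d}\rangle$ with respect to the \emph{second} weight $w'$: by Lemma~\ref{lem:initforms}, each $\init_w r_{a,b,c,d}$ contains $x_{a,c}x_{b,d}$, so $\init_{w'}(\init_w r_{a,b,c,d})=\pm x_{a,c}x_{b,d}$, hence $I'_\mon\subseteq\init_{w'}J_w$. Since $I'_\mon$ is itself an initial ideal of $I$ (Lemma~\ref{lem:ImonI}), one gets $\grdim J_w=\grdim\init_{w'}J_w\geq\grdim I'_\mon=\grdim I\geq\grdim J_w$ and the Gr\"obner basis property follows. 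This is effectively the standard-monomial argument you envision, but with the correct (larger) monomial ideal $I'_\mon$ playing the role of what you assigned to $\ker\psi_{T,\vv}$; working with $I'_\mon$ has the added advantage that the two key facts needed ($\init_{w'}\init_w r_{a,b,c,d}$ is a single monomial, and $I'_\mon$ is an initial ideal of $I$) replace the hard explicit straightening relations you identified as the bottleneck. Replacing $\ker\psi_{T,\vv}$ by $I'_\mon$ in your scheme would repair it, but at that point it coincides with the paper's proof.
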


\subsection{The proof and bases of $I$}\label{sec:mainproof}

\begin{proof}[Proof of Theorem~\ref{thm:main}]
We have already seen that $w(T,\vv,l)\in|\Trop I|$ for any ASWPT $(T,\vv,l)$, verifying claim (C1).

Also, the statement of Corollary~\ref{cor:gbasis} is easily extended to an arbitrary ASPT $(T,\vv,l)$. Indeed, the set of weights $w$ with respect to which the expressions $r_{a,b,c,d}$ form a Gr\"obner basis is closed (e.g., by lower semicontinuity of graded dimension of $\al \init_w r_{a,b,c,d}\ar_{a\prec b\prec c\prec d}$). Hence, it suffices to to consider the case of maximal $(T,\vv)$. In this case we may find $\tau\in W$ such that $(T,\vv\circ\tau^{-1},l)$ is of the form considered in Corollary~\ref{cor:gbasis}. However, 
\begin{equation*}\label{eq:htauofr}
h_\tau(r_{a,b,c,d})=\pm r_{\tau(a),\tau(b),\tau(c),\tau(d)}  
\end{equation*} 
since both sides contain the same three monomials and we can obtain a quadratic monomial as their linear combination contradicting the primeness of $I$, unless the two sides differ by a scalar factor. Therefore, 
\begin{equation}\label{eq:tauinitform}
h_\tau(\init_{w(T,\vv,l)} r_{a,b,c,d})=\pm\init_{w(T,\vv\circ\tau^{-1},l)} r_{\tau(a),\tau(b),\tau(c),\tau(d)}.    
\end{equation}
The expressions in the right-hand side of~\eqref{eq:tauinitform} generate $\init_{w(T,\vv\circ\tau^{-1},l)} I$, which coincides with $h_\tau(\init_{w(T,\vv,l)} I)$ by Proposition~\ref{prop:htauinI}. Hence, the forms $\init_{w(T,\vv,l)} r_{a,b,c,d}$ generate $\init_{w(T,\vv,l)} I$. 

We proceed to verify (C2).
First we check that $\init_{w(T,\vv,l_1)} I=\init_{w(T,\vv,l_2)} I$ for ASWPTs $(T,\vv,l_1)$ and $(T,\vv,l_2)$. Indeed, for $a\prec b\prec c\prec d$, let $T_{a,b,c,d}$ denote the minimal subtree of $T$ containing the leaves $\vv(a)$, $\vv(b)$, $\vv(c)$, $\vv(d)$. One sees that the initial form $\init_{w(T,\vv,l)}r_{a,b,c,d}$ is determined by $T_{a,b,c,d}$ (for details, see proof of Lemma~\ref{lem:initforms}). Therefore, this initial form is independent of the weighting $l$. Together with the above Gr\"obner basis statement, this shows that $\init_{w(T,\vv,l)} I$ is also independent of the weighting. 

Conversely, we check that if $(T_1,\vv_1,l_1)$ and $(T_2,\vv_2,l_2)$ are ASWPTs with $(T_1,\vv_1)\neq(T_2,\vv_2)$, then $\init_{w(T_1,\vv_1,l_1)} I\neq\init_{w(T_2,\vv_2,l_2)} I$. It suffices to consider the case when $w(T_2,\vv_2,l_2)$ lies in the interior of a facet of $C_{T_1,\vv_1}$. Indeed, since the initial ideal is constant within each $C_{T,\vv}$, these cones form a refinement of the tropical fan. If this refinement were proper, there would be two cones that define the same initial ideal and share a facet, which would then also correspond to the same initial ideal.

Thus, $(T_2,\vv_2)$ is obtained from $(T_1,\vv_1)$ by a single symmetric contraction. Suppose this is the symmetric contraction by the non-leaf edge $e$ between vertices $v_1$ and $v_2$. Since $v_1$ and $v_2$ have degree at least 3, we may find leaves $\vv_1(a)$, $\vv_1(b)$ such that $\vpath(\vv_1(a),\vv_1(b))$ contains $v_1$ but not $v_2$, and leaves $\vv_1(c)$, $\vv_1(d)$ such that $\vpath(\vv_1(c),\vv_1(d))$ contains $v_2$ but not $v_1$. Then, 
\[\init_{w(T_1,\vv_1,l_1)} r_{a,b,c,d}=\pm x_{a,c}x_{b,d}\pm x_{a,d}x_{b,c},\] 
while $\init_{w(T_2,\vv_2,l_2)} r_{a,b,c,d}=r_{a,b,c,d}$. A monomial-free ideal cannot contain both of these initial forms and we conclude that $\init_{w(T_1,\vv_2,l_1)} I\neq\init_{w(T_2,\vv_2,l_2)} I$.

It remains to verify (C3). For this last step, we use the following notation. For $w\in\bR^D$ let $\wh w\colon N^2\to\bR$ be defined by $\wh w(a,b)=w_{a,b}$ for $a\neq b$, while all $\wh w(a,a)=0$. 

We check that if $w\in|\Trop I|$, then $w$ necessarily has the form $w(T,\vv,l)$ for an ASWPT $(T,\vv,l)$. We may assume that $\wh w$ is a metric on $N$. Indeed, both $\Trop I$ and the space of ASPTs are invariant under translations by the all-ones vectors $(1,\dots,1)$. However, if we denote $w_C=w+C(1,\dots,1)$ for $C\in\bR$, then $\wh{w_C}(a,b)=\wh w(a,b)+C$ for all $a\neq b$, hence $\wh{w_C}$ is a metric for large enough $C$.

Since every $\init_w r_{a,b,c,d}$ is not a monomial, the function $\wh w$ satisfies the \textit{four-point condition}: among the three quantities $\wh w(a,b)+\wh w(c,d)$, $\wh w(a,c)+\wh w(b,d)$ and $\wh w(a,d)+\wh w(b,c)$ at least two are maximal. Since $\wh w$ is also a metric, by the results of~\cite{Bun1974} there exists a metric (i.e., positively-weighted) phylogenetic tree $(T,\vv,l)$ such that $d_{T,\vv,l}(a,b)=\wh w(a,b)$ for all $a,b\in N$. We check that $(T,\vv,l)$ is an ASWPT by verifying that the values $d_{T,\vv,l}(a,b)=\wh w(a,b)$ satisfy the two properties in Lemma~\ref{lem:keylemma}. Property (i) is immediate from our definition of $\wh w$. Furthermore, for any $1\le i<j<k\le n$ we have
\begin{multline}\label{eq:sijk}
s_{i,j,k}=x_{i,j}x_{i,\ol j}x_{k,\ol k}+x_{i,k}x_{i,\ol k}x_{j,\ol j}+x_{j,k}x_{j,\ol k}x_{i,\ol i}-2x_{i,k}x_{i,\ol j}x_{j,\ol k}\\
= x_{j,\ol k}r_{i,j,k,\ol i}-x_{i,\ol k}r_{i,j,k,\ol j}+x_{i,\ol j}r_{i,j,k,\ol 
k} \in I
\end{multline}
and, therefore, $\init_w s_{i,j,k}$ is not a monomial. However, the gradings of the four terms in $s_{i,j,k}$ with respect to $w$ are equal to the four quantities in property (ii), which means that this property holds for $(T,\vv,l)$. Hence, $(T,\vv,l)$ is an ASWPT and $w=w(T,\vv,l)$.
\end{proof}

We have two immediate consequences of this proof, the second of which emphasizes the importance of the expressions $s_{i,j,k}$ defined by~\eqref{eq:sijk}.

\begin{cor}\label{cor:grobnerbasis}
The expressions $r_{a,b,c,d}$ with $a\prec b\prec c\prec d$ form a Gr\"obner basis of $I$ with respect to any $w\in|\Trop I|$.    
\end{cor}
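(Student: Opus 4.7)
The plan is to read Corollary~\ref{cor:grobnerbasis} as a direct consequence of Theorem~\ref{thm:main} combined with Corollary~\ref{cor:gbasis} and the $W$-equivariance of the Gr\"obner fan. By Theorem~\ref{thm:main}, every $w\in|\Trop I|$ has the form $w(T,\vv,l)$ for some ASWPT $(T,\vv,l)$, so the task reduces to establishing the Gr\"obner basis property at every such point.

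First, I would reduce to the case of maximal ASPTs by a closedness argument. The Gr\"obner basis condition at $w$ amounts to the equality
\[
\grdim\al\init_w r_{a,b,c,d}\ar_{a\prec b\prec c\prec d}=\grdim I,
\]
the reverse inequality always holding. By lower semicontinuity of the left-hand side in $w$, the locus in $\bR^D$ where this equality holds is closed. Proposition~\ref{prop:spaceofASPTs}(c) shows that every cone $C_{T',\vv'}$ is a face of some maximal cone $C_{T,\vv}$, hence every point $w(T',\vv',l')$ lies in the closure of the union of the maximal cones. Consequently, it suffices to verify the Gr\"obner basis property when $(T,\vv)$ is maximal.

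Second, for a maximal ASPT $(T,\vv)$, I would invoke the $W$-action to reduce to the standard form covered by Corollary~\ref{cor:gbasis}. Choosing $\tau\in W$ such that $(T,\vv\circ\tau^{-1})=\cT_{\Theta,\ph_0}$, Corollary~\ref{cor:gbasis} yields the Gr\"obner basis property at $w(T,\vv\circ\tau^{-1},l)=\tau(w(T,\vv,l))$. The identity $h_\tau(r_{a,b,c,d})=\pm r_{\tau(a),\tau(b),\tau(c),\tau(d)}$ combined with Proposition~\ref{prop:htauinI} then transports this property back: applying $h_\tau$ sends the initial forms $\init_{w(T,\vv,l)} r_{a,b,c,d}$ (up to signs and relabeling of indices) to a generating set of $h_\tau(\init_{w(T,\vv,l)} I)=\init_{\tau(w(T,\vv,l))} I$, so the original initial forms already generate $\init_{w(T,\vv,l)} I$.

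Since every ingredient is already in hand from the proof of Theorem~\ref{thm:main}, I do not expect any substantive obstacle; Corollary~\ref{cor:grobnerbasis} merely extracts the Gr\"obner basis statement implicit in that proof and packages it as a uniform assertion over all of $|\Trop I|$.
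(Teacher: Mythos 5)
Your proposal is correct and follows essentially the same route as the paper: the Gr\"obner basis property for the $r_{a,b,c,d}$ is established in the proof of Theorem~\ref{thm:main} by precisely the two reductions you describe — first to maximal ASPTs via the closedness (lower semicontinuity of graded dimension) argument, and then to the standard form $\cT_{\Theta,\ph_0}$ via the $W$-action and the identity $h_\tau(r_{a,b,c,d})=\pm r_{\tau(a),\tau(b),\tau(c),\tau(d)}$, after which Corollary~\ref{cor:gbasis} applies. The only presentational difference is that you invoke Theorem~\ref{thm:main} at the outset to write $w=w(T,\vv,l)$, whereas the paper embeds the argument inside that theorem's proof so that the same fact is already in hand; this is immaterial.
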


\begin{cor}\label{cor:tropicalbasis}
The expressions $r_{a,b,c,d}$ with $a\prec b\prec c\prec d$, together with the expressions $s_{i,j,k}$ for all $1\le i<j<k\le n$, form a tropical basis of $I$.
\end{cor}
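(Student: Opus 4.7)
The plan is to leverage the proof of Theorem~\ref{thm:main} almost verbatim. The given set generates $I$, since the $r_{a,b,c,d}$ alone already do so by Proposition~\ref{prop:Igens}. One direction of the tropical basis condition is automatic: if $w\in|\Trop I|$, then $\init_w f$ cannot be a monomial for any $f\in I$, and in particular not for any $r_{a,b,c,d}$ or $s_{i,j,k}$.

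For the converse, I want to deduce $w\in|\Trop I|$ whenever $\init_w r_{a,b,c,d}$ and $\init_w s_{i,j,k}$ are all non-monomial. Since $I$ is homogeneous, the line spanned by $(1,\dots,1)$ lies in the lineality space of $\Grob I$, so I first replace $w$ by $w+C(1,\dots,1)$ for a large constant $C$ in order to ensure that $\wh w$ is a metric on $N$; neither the set $|\Trop I|$ nor the hypothesis on initial forms is affected. The four-point condition for $\wh w$ now follows from $\init_w r_{a,b,c,d}$ being non-monomial, so by~\cite{Bun1974} there exists a metric phylogenetic tree $(T,\vv,l)$ with $d_{T,\vv,l}(a,b)=\wh w(a,b)$ for all $a,b\in N$.

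It remains to verify the two hypotheses of Lemma~\ref{lem:keylemma} for $(T,\vv,l)$, which is precisely the reason for including the cubics $s_{i,j,k}$ in the basis. Property~(i) is automatic from the symmetrization convention $w_{a,b}=w_{\ol a,\ol b}$ fixed in Section~\ref{sec:genrel}. Property~(ii) is exactly the content of $\init_w s_{i,j,k}$ not being a monomial: the four monomials of $s_{i,j,k}$ carry $w$-weights equal to the four quantities listed in Lemma~\ref{lem:keylemma}(ii), so the initial form of $s_{i,j,k}$ being non-monomial means that the maximum of those four weights is attained at least twice, which is precisely the condition that none of them strictly exceeds the other three. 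Lemma~\ref{lem:keylemma} then yields that $(T,\vv,l)$ is an ASWPT, and Theorem~\ref{thm:main} concludes that $w=w(T,\vv,l)\in|\Trop I|$.

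No new obstacle arises beyond the already-completed proofs of Lemma~\ref{lem:keylemma} and Theorem~\ref{thm:main}; the substance of the corollary is simply the observation that the two test properties in Lemma~\ref{lem:keylemma} can be rephrased as initial-form conditions on the finite explicit list of generators $\{r_{a,b,c,d}\}\cup\{s_{i,j,k}\}$, with $s_{i,j,k}$ bookkeeping property~(ii) and $r_{a,b,c,d}$ together with the homogeneous shift bookkeeping the four-point condition. The only mildly delicate point is the matching between the four monomials of $s_{i,j,k}$ and the four quantities of Lemma~\ref{lem:keylemma}(ii), which is routine.
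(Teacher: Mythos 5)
Your proposal is correct and matches the paper's own treatment: the paper states the corollary is "an immediate consequence" of the last step in the proof of Theorem~\ref{thm:main}, which is exactly the argument you spell out (shift to make $\wh w$ a metric, apply~\cite{Bun1974} via the four-point condition from the non-monomiality of the $\init_w r_{a,b,c,d}$, then verify the two hypotheses of Lemma~\ref{lem:keylemma} using the symmetrization convention for property~(i) and non-monomiality of $\init_w s_{i,j,k}$ for property~(ii)). The matching of the four monomial weights of $s_{i,j,k}$ to the four quantities in Lemma~\ref{lem:keylemma}(ii) is indeed immediate from $d_{T,\vv,l}(a,b)=\wh w(a,b)=w_{a,b}$.
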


\begin{remark}
The inclusion of the expressions $s_{i,j,k}$ into the tropical basis is necessary, in contrast to the type A case where the quadratic generators already form a tropical basis, see~\cite{SpeStu2004}. Specifically, if we consider the phylogenetic tree $(T,\vv,l)$ in Figure~\ref{fig:nonASPT} and define $w\in\bR^D$ by $w_{a,b}=d_{T,\vv,l}(a,b)$, then $w\notin|\Trop I|$ despite all $\init_w r_{a,b,c,d}$ being binomials.
\end{remark}

\subsection{Toric degenerations}

We have seen that the initial ideal corresponding to a maximal cone of $\Trop I$ is contained in a toric ideal. We extend this observation to a classification of the initial toric ideals of $I$, i.e., toric degenerations of $X$. Our result is as follows.

\begin{theorem}\label{thm:toricdegens}
For an ASPT $(T,\vv)$ and $w\in C_{T,\vv}$, the ideal $\init_w I$ is toric if and only if $(T,\vv)$ is maximal of type I.
\end{theorem}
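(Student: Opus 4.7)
The plan is to analyze $\init_w I$ via its Gr\"obner basis $\{r_{a,b,c,d}\}_{a\prec b\prec c\prec d}$ from Corollary~\ref{cor:gbasis} combined with the combinatorics of $T$. For any quartet $a\prec b\prec c\prec d$, the initial form $\init_w r_{a,b,c,d}$ depends only on the minimal subtree $T_{a,b,c,d}\subset T$ spanning the four corresponding leaves; a direct quartet analysis shows that either exactly two of the three weights $d_{T,\vv,l}(a,b)+d_{T,\vv,l}(c,d)$, $d_{T,\vv,l}(a,c)+d_{T,\vv,l}(b,d)$, $d_{T,\vv,l}(a,d)+d_{T,\vv,l}(b,c)$ are maximal (in which case $\init_w r_{a,b,c,d}$ is a binomial), or all three coincide (in which case $\init_w r_{a,b,c,d}=r_{a,b,c,d}$ is the full trinomial, and I will call $(a,b,c,d)$ an \emph{isotropic quartet}). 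An isotropic quartet exists if and only if $T$ has a vertex of degree at least $4$: at such a vertex $v$, one may pick one leaf from each of four distinct subtrees incident to $v$, and conversely, in a trivalent $T$ every quartet-subtree has a unique internal edge separating the leaves into two pairs.

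For the implication trivalent $\Rightarrow$ toric, note that a trivalent ASPT has $k(T,\vv)=2n-1$ and is therefore maximal, and by the $W$-equivariance used in the proof of Theorem~\ref{thm:main} one may assume $(T,\vv)=\cT_{\Theta,\ph_0}$. Lemma~\ref{lem:initinker} then gives $\init_w I\subseteq\ker\psi_{T,\vv}$ with $\ker\psi_{T,\vv}$ a toric prime, and every $\init_w r_{a,b,c,d}$ is a binomial by the quartet dichotomy. Both $R/\init_w I$ and $R/\ker\psi_{T,\vv}$ have Krull dimension $2n-1$ (the latter because pairwise leaf-distances determine all edge-orbit lengths on any ASPT by Buneman's tree reconstruction~\cite{Bun1974}). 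I would conclude equality of the two ideals by a graded-dimension comparison: since $\grdim\init_w I=\grdim I$, it suffices to show $\grdim\ker\psi_{T,\vv}=\grdim I$, which I would achieve by exhibiting a weight $w'$ for which $\init_{w'}\ker\psi_{T,\vv}$ equals one of the monomial ideals $I_\mon$ or $I'_\mon$ from Section~\ref{sec:monideal}, in the spirit of Lemma~\ref{lem:ImonItor}.

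For the converse, suppose $T$ has a vertex $v$ of degree $\ge 4$ and fix an isotropic quartet $a\prec b\prec c\prec d$ as above, so that $r_{a,b,c,d}\in\init_w I$ is a genuine trinomial. Consider the three-dimensional subspace $V=\langle x_{a,b}x_{c,d},\,x_{a,c}x_{b,d},\,x_{a,d}x_{b,c}\rangle\subset R$. Distinct four-element subsets of $N$ give rise to disjoint triples of quadratic monomials, so the only degree-$2$ Gr\"obner generator of $\init_w I$ whose support meets $V$ is $r_{a,b,c,d}$ itself, and hence $V\cap\init_w I$ is one-dimensional. Now suppose for contradiction that $\init_w I=\ker\phi$ for some monomial map $\phi$. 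The relation $\phi(r_{a,b,c,d})=0$ among three nonzero monomial terms forces $\phi(x_{a,b}x_{c,d})$, $\phi(x_{a,c}x_{b,d})$, $\phi(x_{a,d}x_{b,c})$ to be scalar multiples of a single monic monomial, since any other configuration leaves an uncancelled monomial in $\phi(r_{a,b,c,d})$. But then $\phi|_V$ has one-dimensional image, so $V\cap\ker\phi$ is two-dimensional, contradicting the one-dimensionality of $V\cap\init_w I$. Therefore $\init_w I$ is not toric.

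The main obstacle is the graded-dimension comparison in the trivalent direction: verifying $\grdim\ker\psi_{T,\vv}=\grdim I$ for each combinatorial type of trivalent ASPT (type (I) and type (II) with $m=1$) requires constructing an explicit initial monomial degeneration of $\ker\psi_{T,\vv}$ matching $I_\mon$ or $I'_\mon$, and a careful choice of the refining weight $w'$. All other steps---the quartet dichotomy, the subspace-dimension argument, and the invocation of Lemma~\ref{lem:initinker}---are clean consequences of the Gr\"obner basis from Corollary~\ref{cor:gbasis} and previously established results.
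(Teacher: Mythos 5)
Your ``only if'' argument has a genuine gap in the claim that \emph{distinct four-element subsets of $N$ give rise to disjoint triples of quadratic monomials}. This is false: for example, the quartets $\{1,2,3,4\}$ and $\{1,2,\ol 3,\ol 4\}$ both contribute the monomial $x_{1,2}x_{3,4}$ (because $x_{\ol 3,\ol 4}=x_{3,4}$). As written, your justification that $V\cap\init_w I$ is one-dimensional therefore does not go through. The paper sidesteps this by choosing the isotropic quartet more carefully: a vertex of degree $4$ can only occur in a maximal ASPT of type (II), where such a vertex is $\sigma$-fixed with two $\sigma$-swapped and two $\sigma$-invariant incident subtrees, so one can take the quartet to be of the form $\{a,\ol a,b,c\}$. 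For a quartet of this shape the three monomials genuinely occur in no other $r_{a',b',c',d'}$ (essentially because the factor $x_{a,\ol a}$, or the doubled index $a$, pins down the quartet up to the bar involution), and the linear-algebra argument you sketch then closes. Your subspace/monomial-map framing of the contradiction is a clean equivalent of the paper's binomial-ideal argument; it just needs the correct choice of quartet. Note also that you should invoke Corollary~\ref{cor:grobnerbasis} rather than Corollary~\ref{cor:gbasis} here, since the latter is stated only for weights in a fixed $C_{T,\vv}$ of the normalized form $\cT_{\Theta,\ph_0}$.

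For the ``if'' direction you correctly identify the crux --- showing $\grdim\ker\psi_{T,\vv}=\grdim I$ --- but your proposal to exhibit a monomial initial degeneration of $\ker\psi_{T,\vv}$ matching $I_\mon$ or $I'_\mon$ for each trivalent $T$ is left entirely unexecuted, and it is not clear that a single weight $w'$ of the kind used in Lemma~\ref{lem:ImonItor} works for every triangulation. The paper instead proves the identity $\grdim\ker\psi_{T,\vv}=\grdim I$ only once, for the caterpillar tree $T_0$ where $\ker\psi_{T_0,\vv_0}=I_\tor$ is already known to be an initial ideal of $I$, and then propagates it to every trivalent ASPT by a connectivity argument: any two axially symmetric triangulations are joined by a sequence of \emph{symmetric flips}, and a bijective count of monomials shows that the graded dimension of $\ker\psi_{T,\vv}$ is invariant under a symmetric flip. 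That flip argument is the genuinely new content in the proof, and your sketch does not contain a substitute for it.
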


Let $\Theta_0$ be the subdivision consisting of all diagonals that contain the endpoint of $\delta_0$ that lies between the sides $\ph_0(1)$ and $\ph_0(\ol 1)$. Set $(T_0,\vv_0)=\cT_{\Theta_0,\ph_0}$, it is the ASPT shown in Figure~\ref{fig:T0v0}. Recall the map $\pi\colon R\to U$ and the toric ideal $I_\tor=\ker\pi$ considered in Subsection~\ref{sec:monideal}. We define a homomorphism $\alpha\colon Q_{T_0,\vv_0}\to U$. Denote the first $n$ horizontal edges in Figure~\ref{fig:T0v0} by $e_1, f_2, \dots, f_{n-1}, f_n$ from left to right, and the first $n-1$ vertical edges by $e_2,\dots,e_n$. Any remaining edge is symmetric to one of these $2n-1$, hence $Q_{T_0,\vv_0}$ is generated by the $t_{e_i}$ and $t_{f_i}$. We set
\[
\alpha(t_{e_i})=
\begin{cases}
\sqrt{-\mathrm i}\,v_1u_1 &\text{if } i=1,\\
\sqrt{-\mathrm i}\,v_i &\text{if } i>1,   
\end{cases}\quad
\alpha(t_{f_i})=
\begin{cases}
u_i &\text{if } i<n,\\
\mathrm iu_n^2 &\text{if } i=n,
\end{cases}
\]
where $\sqrt{-\mathrm i}=\frac{1-\mathrm i}{\sqrt2}$.

\begin{figure}[h!tbp]

\begin{tikzpicture}[x=15mm, y=10mm]
\node[circle, fill=black, inner sep=2] (a) at (-1,0) {};
\node[circle, fill=black, inner sep=2] (b) at (0,0) {};
\node[circle, fill=black, inner sep=2] (v2) at (0,-1) {};
\node[circle, fill=black, inner sep=2] (c) at (2.5,0) {};
\node[circle, fill=black, inner sep=2] (vn) at (2.5,-1) {};
\node[circle, fill=black, inner sep=2] (d) at (3.5,0) {};
\node[circle, fill=black, inner sep=2] (voln) at (3.5,-1) {};
\node[circle, fill=black, inner sep=2] (e) at (6,0) {};
\node[circle, fill=black, inner sep=2] (vol2) at (6,-1) {};
\node[circle, fill=black, inner sep=2] (f) at (7,0) {};

\node at (-1.4,0) {$\vv_0(1)$};
\node at (-.4,-1) {$\vv_0(2)$};
\node at (2.1,-1) {$\vv_0(n)$};
\node at (3.9,-1) {$\vv_0(\ol n)$};
\node at (6.4,-1) {$\vv_0(\ol 2)$};
\node at (7.4,0) {$\vv_0(\ol1)$};

\coordinate (x) at ($(b)!.2!(c)$);
\coordinate (z) at ($(b)!.75!(c)$);
\coordinate (y) at ($(d)!.25!(e)$);
\coordinate (w) at ($(d)!.8!(e)$);

\draw (a) -- (b) node[pos=.5,yshift=5] {$e_1$}
      (b) -- (v2) node[pos=.5,xshift=6.5] {$e_2$}
      (vn) -- (c) node[pos=.5,xshift=6.5] {$e_n$}
      (c) -- (d) node[pos=.5,yshift=7] {$f_n$}
      (d) -- (voln) 
      (vol2) -- (e) 
      (e) -- (f) 
      (b) -- (x) node[pos=.5,yshift=7] {$f_2$}
      (z) -- (c) node[pos=.45,yshift=6] {$f_{n-1}$}
      (d) -- (y) 
      (w) -- (e); 
\draw[dashed] 
      (x) -- (z)
      (y) -- (w);
\end{tikzpicture} 

\caption{The tree $(T_0,\vv_0)=\cT_{\Theta_0,\ph_0}$.}\label{fig:T0v0}

\vspace{-2mm}

\end{figure}

It is checked directly that $\pi=\alpha\circ\psi_{T_0,\vv_0}$. Furthermore, algebraic independence of the $\alpha(t_{e_i})$ and $\alpha(t_{f_i})$ implies that $\alpha$ is an embedding, and we obtain $\ker\psi_{T_0,\vv_0}=\ker\pi=I_\tor$. Lemma~\ref{lem:ItorI} now implies that $\ker\psi_{T_0,\vv_0}$ is an initial ideal of $I$. However, by Lemma~\ref{lem:initinker}, $\ker\psi_{T_0,\vv_0}$ contains $\init_w I$ for any $w\in C_{T_0,\vv_0}$. Hence, $\init_w I=\ker\psi_{T_0,\vv_0}$ for any $w\in C_{T_0,\vv_0}$.

To extend the last equality to other ASPTs, we use the notion of flips. Let $\Theta$ be a maximal axially symmetric triangulation of type I. For a diagonal $\delta\neq\delta_0$ in $\Theta$, consider the two triangular cells of $\Theta$ adjacent to $\delta$ and let $\delta^*\neq\delta$ denote the other diagonal of the quadrilateral formed by these two triangles. Let $\sigma(\delta)$ denote the reflection of $\delta$ across $\delta_0$. We define
\[\Theta^\delta=(\Theta\bs\{\delta,\sigma(\delta)\})\cup\{\delta^*,\sigma(\delta)^*\}\] 
and say that $\Theta^\delta$ is obtained from $\Theta$ by a \textit{symmetric flip} of $\delta$. 
It is clear that any two maximal axially symmetric triangulations of type I can be obtained from each other by a sequence of symmetric flips.

\begin{proof}[Proof of Theorem~\ref{thm:toricdegens}]
For the ``only if'' part, suppose that $\init_w I$ is toric. Then the cone $C_{T,\vv}$ is necessarily maximal in $\Trop I$ and $(T,\vv)$ is a maximal ASPT. 
If $(T,\vv)$ has type II, we may find $a,b\in N$ such that $\Path(\vv(a),\vv(b))$ contains at least one $\sigma$-fixed edge, where $\sigma$ denotes the symmetry of $(T,\vv)$. Note that $\Path(\vv(a),\vv(\ol b))$ will contain the same $\sigma$-fixed edge, while $\Path(\vv(a),\vv(\ol a))$ and $\Path(\vv(b),\vv(\ol b))$ contain no $\sigma$-fixed edges. We deduce that $\init_w I$ contains the following reducible polynomial:
\[\init_w r_{a,b,\ol a,\ol b} = \init_w (x_{a,b}^2+x_{a,\ol b}^2-x_{a,\ol a}x_{b,\ol b}) = x_{a,b}^2+x_{a,\ol b}^2,\]
which contradicts the quadratic ideal $\init_w I$ being toric (and therefore prime).

We now show that $\init_w I$ is toric if $(T,\vv)$ is maximal of type I. By Corollary~\ref{cor:fansWinv} and \eqref{eq:taurespectsw}, we may assume that $(T,\vv)=\cT_{\Theta,\ph_0}$ for a maximal axially symmetric subdivision $\Theta$ of type I. We show that in this case $\init_w I=\ker\psi_{T,\vv}$.
In view of Lemma~\ref{lem:initinker}, it suffices to check that $\grdim(\ker\psi_{T,\vv})=\grdim I$. Since we have already seen that $\ker\psi_{T_0,\vv_0}$ is an initial ideal of $I$, we have reduced to the following claim:
\begin{equation}\label{eq:toricgrdims}
\grdim(\ker\psi_{T,\vv})=\grdim(\ker\psi_{T^*,\vv^*}),
\end{equation}
where $(T^*,\vv^*)=\cT_{\Theta^\delta,\ph_0}$ for some diagonal $\delta\neq\delta_0$ in $\Theta$.

Let $e$ denote the edge of $T$ corresponding to $\delta$ and let $e^*$ denote the edge of $T^*$ corresponding to $\delta^*$. Let $(T',\vv')$ be the ASPT obtained from $(T,\vv)$ by the symmetric contraction of $e$, it is also obtained from $(T^*,\vv^*)$ by symmetric contraction of $e^*$. We will write $\sigma$ to denote the symmetry of both $(T,\vv)$ and $(T^*,\vv^*)$. Furthermore, three sets are naturally in mutual bijection: the edges of $T$ other than $e$ and $\sigma(e)$, the edges of $T^*$ other than $e^*$ and $\sigma(e^*)$, and the edges of $T'$. For convenience of notation, we identify an edge lying in one of these three sets with the respective elements of the other sets. In particular, the polynomial ring $Q_{T',\vv'}$ (defined similarly to $Q_{T,\vv}$) becomes a subring of both $Q_{T,\vv}$ and $Q_{T^*,\vv^*}$. 

Consider the map $\psi_{T',\vv'}\colon R\to Q_{T',\vv'}$ taking $x_{a,b}$ to $\prod_{f\in\Path(\vv'(a),\vv'(b))} t_f$. If  $M'\in Q_{T',\vv'}$ is a monomial such that $M't_e^k$ lies in the image $\psi_{T,\vv}(R)$ for some $k$, then $M'$ lies in the image of $\psi_{T',\vv'}$. The same applies if $M't_{e^*}^l$ lies in $\psi_{T^*,\vv^*}(R)$ for some $l$. 

To verify~\eqref{eq:toricgrdims}, we establish a bijection between the monomials in $\psi_{T,\vv}(R)$ and those in $\psi_{T^*,\vv^*}(R)$ that respects the grading induced by the standard grading on $R$. Note that for a monomial $M$ contained in the image of $\psi_{T,v}$, $\psi_{T^*,\vv^*}$, or $\psi_{T',\vv'}$, its grading induced by the standard grading on $R$ is half of the total degree of $M$ in those $t_f$ for which $f$ is a leaf. Thus, taking the observation in the previous paragraph into account, it suffices to check that for every monomial $M'\in \psi_{T',\vv'}(R)$, the number of monomials of the form $M't_e^k$ in $\psi_{T,\vv}(R)$ equals the number of monomials of the form $M't_{e^*}^l$ in $\psi_{T^*,\vv^*}(R)$.  

Consider $M't_e^k\in \psi_{T,\vv}(R)$ and $M't_{e^*}^l\in\psi_{T^*,\vv^*}(R)$. Let $e_1$, $e_2$, $e_3$, $e_4$ be the edges incident to $e$ in $T$ (and to $e^*$ in $T^*$); we assume that $e_1$ and $e_2$ are incident in $T$ but $e_1$ and $e_3$ are incident in $T^*$. Since $\sigma(e)\neq e$, all $e_i$ lie in distinct $\sigma$-orbits. Let $k_i$ denote the power in which $M'$ contains $t_{e_i}$. We have two cases. 

Let $\sigma(e_i)\neq e_i$ for all $i$. Note that $\Path(\vv(a),\vv(b))$ contains $e$ if and only if it contains exactly one of $e_1$ and $e_2$ and exactly one of $e_3$ and $e_4$, and likewise for $\sigma(e)$ and the $\sigma(e_i)$. This provides two bounds: $k\ge\max(|k_1-k_2|,|k_3-k_4|)$ and $k\le \min(k_1+k_2,k_3+k_4)$. It also implies that $k$ is of the same parity as $k_1+k_2$ and $k_3+k_4$. The two bounds together with the parity condition define the set of all possible values of $k$. Similar restrictions, but with $k_2$ and $k_3$ exchanged, hold for $l$, and we observe that
\[\min(k_1+k_2,k_3+k_4)-\max(|k_1-k_2|,|k_3-k_4|)=\min(k_1+k_3,k_2+k_4)-\max(|k_1-k_3|,|k_2-k_4|)\]
for any real numbers $k_1, k_2, k_3, k_4$. 
Indeed, if $s\in S_4$ is such that ${k_{s(1)}\le k_{s(2)}\le k_{s(3)}\le k_{s(4)}}$, then both sides equal $2k_{s(1)}+\min(0,d_1-d_2)$, where $d_1=k_{s(2)}-k_{s(1)}$ and $d_2=k_{s(4)}-k_{s(3)}$.

Let $e_1=\sigma(e_1)$. Then, $\Theta$ and $\Theta^\delta$ have type (I) and $e_1$ is the edge corresponding to $\delta_0$. In this case, the argument is similar but $k_1$ is replaced with $2k_1$ in all conditions.
\end{proof}


\begin{remark}
Theorem~\ref{thm:toricdegens} implies that $\init_w I$ is prime whenever $w\in C_{T,\vv}$ for some $(T,\vv)$ that is obtained from a type I maximal ASPT by a sequence of symmetric contractions. Meanwhile, the argument in the first paragraph of the above proof can be applied to show that $\init_w I$ is not prime for all other $w$. As discussed in the beginning of Subsection~\ref{sec:signedtrops}, ASPTs obtained from type I maximal ASPTs by symmetric contractions are precisely the \textit{centrally symmetric phylogenetic trees}. Thus, the tropical cone $C_{T,\vv}$ is \textit{prime} if and only if $(T,\vv)$ is centrally symmetric.
\end{remark}
\vspace{-2.5mm}

\section{The cluster configuration space}

\subsection{Definition, tropicalization, and full rank cluster algebras}\label{sec:TropM}

We recall the definition of the cluster configuration space and its realization in $u$-coordinates, largely following~\cite[\S\S~3--4]{AHHL2021}. We then obtain a realization of its tropicalization in $u$-coordinates, as well as suitable Gr\"obner and tropical bases. 
We conclude the subsection by applying this construction to an extension of our main result to all full rank geometric type cluster algebras of finite type C.

The lineality space of $\Trop I$ is $L=C_{T,\vv}$, where $T$ has a single non-leaf vertex and $\vv$ is arbitrary. The vector space $L$ has a basis $w_1,\dots,w_n$, where 
\[(w_i)_{a,b}=|\{a,b\}\cap\{i,\ol i\}|\]
for $(a,b)\in D$. The ideal $I$ is homogeneous with respect to the $\bZ^n$-grading $(w_1,\dots,w_n)$, which we denote by $\grad$. Note that $w_i=w(T,\vv,l_i)$ for $(T,\vv)$ as above and $l_i$ that takes value 1 on the edges incident to $\vv(i)$ and $\vv(\ol i)$, and equals 0 otherwise.

Equivalently, $I$ is preserved by the action of the torus $(\bC^*)^n$ on $R$ given by 
\begin{equation}\label{eq:torus}
(t_1,\dots,t_n)x_{a,b}=t_{|a|}t_{|b|}x_{a,b}.
\end{equation}
This induces an action of $(\bC^*)^n$ on the cluster algebra $\cA$ and on the cluster variety $X$. We note that this action has the kernel $\{(1,\dots,1),(-1,\dots,-1)\}$ and, therefore, both $\cA$ and $X$ enjoy a faithful action of $(\bC^*)^n/(\bZ/2)$.

We also consider an involution $\eta$ of $R$ given by 
\begin{align*}
\eta(x_{i,j}) & = -x_{i,j},\quad 1\le i<j\le n,\\
\eta(x_{i,\ol j}) & = x_{i,\ol j},\quad 1\le i\le j\le n.
\end{align*}
This involution scales each $r_{a,b,c,d}$ by a factor of $\pm1$ and, therefore, preserves $I$. This allows us to view $\eta$ as an automorphism of $\cA$ and $X$. Note that $\eta$ commutes with the above torus action. Combining both actions we obtain a group of automorphisms
\[H\simeq (\bZ/2)\times(\bC^*)^n/(\bZ/2).\]

\begin{lemma}\label{lem:H}
The group $H$ contains all automorphisms of $R$ that multiply each $x_{i,j}$ by a scalar and preserve $I$.    
\end{lemma}
\begin{proof}
Let $\zeta$ be an automorphism of $R$ that multiplies $x_{a,b}$ by $k_{a,b}\in\bC^*$ and preserves $I$. Note that $\zeta$ must also multiply each $r_{a,b,c,d}$ by a scalar $k_{a,b,c,d}$. For $i\in[1,n-1]$ we have $k_{i,i+1,\ol i,\ol{i+1}}=k_{i,\ol{i+1}}^2=k_{i,\ol i} k_{i+1,\ol{i+1}}$. This allows us to choose $t_1$ so that $t_1^2=k_{1,\ol 1}$, then choose $t_2$ so that $t_2^2=k_{2,\ol 2}$ and $t_1t_2=k_{1,\ol 2}$, then choose $t_3$ so that $t_3^2=k_{3,\ol 3}$ and $t_2t_3=k_{2,\ol 3}$, etc. We claim that the obtained tuple $(t_1,\dots,t_n)$ satisfies $t_it_j=k_{i,\ol j}$ for all $1\le i\le j\le n$. We proceed by induction on $j-i$, the cases $j=i$ and $j-i=1$ holding by construction. For $j-i\ge 2$, we have $k_{i,j\ol{i+1},\ol j}=k_{i,\ol{i+1}}k_{j,\ol j}=k_{i,\ol j}k_{i+1,\ol j}$. We also have $k_{i,\ol{i+1}}k_{j,\ol j}=t_it_{i+1}t_j^2$ and, by induction hypothesis, $k_{i+1,\ol j}=t_{i+1}t_j$. Combining the last three equalities, we obtain $k_{i,\ol j}=t_it_j$. 

Since $k_{1,2,\ol 1,\ol 2}=k_{1,2}^2=k_{1,\ol 2}^2$, we have $k_{1,2}=\pm t_1t_2$. Suppose that $k_{1,2}=t_1t_2$. Then,  for $1\le i<j\le n$, we deduce $k_{i,j}=t_it_j$ from $k_{1,2,\ol i, \ol j}=k_{1,2}k_{i,j}=k_{1,\ol i}k_{2,\ol j}$. This implies that $\zeta$ acts as $(t_1,\dots,t_n)$. If $k_{1,2}=-t_1t_2$, then the same argument shows that $k_{i,j}=-t_it_j$ for all $1\le i<j\le n$, which means that $\zeta$ acts as $\eta\circ(t_1,\dots,t_n)$.
\end{proof}

By definition, $X$ is equipped with an embedding $X\subset\bC^D$, and we consider the very affine part $\mr X\subset X$ where none of the coordinates vanish. We denote by $\mr R$ the ring of Laurent polynomials $\bC[x_{a,b}^{\pm1}]_{(a,b)\in D}$, by $\mr I$ the ideal in $\mr R$ generated by $I\subset R\subset \mr R$, and set $\mr\cA=\mr R/\mr I$. In these notations, we have $\mr X=\Spec\mr\cA$. The action of $H$ naturally extends to $\mr R$, $\mr\cA$ and $\mr X$.

We consider the GIT quotient $\mr X//H=\Spec(\mr\cA^H)$, which we denote by $\cM$. 
In \cite[\S~4]{AHHL2021}, the \textit{cluster configuration space} of any finite type $\mathrm D$ is defined as the GIT quotient of the very affine part of a full rank cluster variety of type $\mathrm D$ by the group of those automorphisms that scale all cluster and frozen variables. Since the cluster algebra $\cA$ has full rank (see~\cite[Example~5.2.11]{IltNCTref2025}), Lemma~\ref{lem:H} implies that $\cM$ is precisely the \textit{cluster configuration space} of type $\rC_{n-1}$.

\begin{remark}
The action of the first component of the exact sequence~\eqref{eq:exactseq} is contained in the action of $H$. Thus, the action of $\wt W$ on $X$ descends to a $W$-action on $\cM$.
\end{remark}

Since $\mr\cA^H=\mr R^H/\mr J$, where $\mr J=\mr I\cap\mr R^H$, a coordinate system on $\cM$ is defined by choosing a generating set of $\mr R^H$. Note that $\mr R^H$ is spanned by Laurent monomials of $\grad$-grading zero whose total degree in the variables $x_{i,j}, 1\le i<j\le n$ is even. Let $\Lambda\subset\bZ^D$ be the lattice formed by the exponent vectors of such monomials. Note that $\dim\Lambda=n^2-n$.

Let $D^*$ consist of all $(a,b)\in D$ other than the $(n-1)$ pairs $(i,i+1)$ with $i\in[1,n-1]$ and the pair $(1,\ol n)$.\footnote{
In cluster algebra terminology, the generators $\Delta_{a,b}$ with $(a,b)\in D^*$ are the cluster variables of $\cA$, while the $\Delta_{a,b}$ with $(a,b)\in D\bs D^*$ are the frozen variables.}
For $a\in N\bs\{\ol n\}$, let $a^\pp$ denote the element covering $a$ in the order $\prec$, while $\ol n^\pp=1$.
For every $(a,b)\in D^*$, we set
\begin{equation}\label{eq:yab}
y_{a,b}=\frac{x_{a^\pp,b}x_{a,b^\pp}}{x_{a,b}x_{a^\pp,b^\pp}}\in\mr R.    
\end{equation}
Evidently, $y_{a,b}\in\mr R^H$ and can, therefore, be viewed as a function on $\cM$. These functions are precisely the functions $f_\gamma$ defined in~\cite[\S~4.2]{AHHL2021}, which generate $\bC[\cM]=\mr R^H$ by~\cite[Theorem~4.2]{AHHL2021}. Since $|D^*|=\dim\Lambda$, this implies that the exponent vectors of the $y_{a,b}$ form a $\bZ$-basis in $\Lambda$ (which can also be checked directly). Thus, we have identified $\mr R^H$ with the ring of Laurent polynomials $\bC[y_{a,b}^{\pm1}]_{(a,b)\in D^*}$.


This fixes an embedding of $\cM$ into $(\bC^*)^{D^*}$, defining the \textit{$u$-coordinates} on $\cM$. In turn, this embedding realizes the \textit{tropical cluster configuration space} $\Trop\cM=\Trop\mr J$ as a fan in $\bR^{D^*}$. 
Consider the linear surjection $\rho\colon\bR^D\to\bR^{D^*}$ with kernel $L$ defined by
\begin{equation}\label{eq:fanprojection}
\rho(w)_{a,b}=w_{a^\pp,b}+w_{a,b^\pp}-w_{a,b}-w_{a^\pp,b^\pp}.    
\end{equation}
For an ASPT $(T,\vv)$, set $C^*_{T,\vv}=\rho(C_{T,\vv})$, this is a simplicial cone of dimension $k(T,\vv)-n$. We deduce~\cite[Conjecture 7.5]{CoxMakh2025}:

\begin{theorem}\label{thm:TropM}
The tropical cluster configuration space $\Trop\cM$ is formed by the cones $C^*_{T,\vv}$ with $(T,\vv)$ ranging over all ASPTs.
\end{theorem}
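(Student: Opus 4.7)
My plan is to derive Theorem~\ref{thm:TropM} directly from Theorem~\ref{thm:main}, using the linear projection $\rho$ to push the fan structure forward. The key input, recorded in the paragraph preceding the theorem, is the equivalence: for every $w\in\bR^D$, the initial ideal $\init_w I$ is monomial-free if and only if $\init_{\rho(w)}\mr J$ is monomial-free. Combined with the description $|\Trop I|=\bigcup_{(T,\vv)} C_{T,\vv}$ from Theorem~\ref{thm:main}, this immediately yields the set-theoretic equality
\[|\Trop\mr J| \;=\; \rho(|\Trop I|) \;=\; \bigcup_{(T,\vv)} C^*_{T,\vv}.\]

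To upgrade this to the claimed fan structure, I would invoke Proposition~\ref{prop:spaceofASPTs}(b), which decomposes each $C_{T,\vv}$ as the product of a simplicial cone of dimension $k(T,\vv)-n$ with the $n$-dimensional lineality space $L$ of $\Trop I$. Since $L = \ker\rho$ by construction, the restriction of $\rho$ to the simplicial factor is a linear bijection onto $C^*_{T,\vv}$, which is therefore a relatively open simplicial cone of the expected dimension, as already noted in the excerpt. The face relations among the $C_{T,\vv}$ given by symmetric contractions (Proposition~\ref{prop:spaceofASPTs}(c)) descend through $\rho$ because $L$ is contained in every such cone, so the images $\{C^*_{T,\vv}\}$ form a polyhedral fan. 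Finally, the constancy of $\init_{\rho(w)}\mr J$ on each $C^*_{T,\vv}$ follows from the constancy of $\init_w I$ on $C_{T,\vv}$ together with the identification of $\init_{\rho(w)}\mr J$ with the $H$-weight-zero isotypic component of $\init_w\mr I$ expressed in the $y$-coordinates, confirming that the $C^*_{T,\vv}$ are precisely the tropical cones of $\mr J$.

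I do not anticipate any serious obstacle beyond the monomial-freeness equivalence itself. If that requires spelling out, the natural argument uses $H$-equivariance of $\mr I$: its initial ideal decomposes as $\init_w\mr I = \bigoplus_\lambda (\init_w\mr I)_\lambda$, and contains a monomial if and only if some isotypic component does, if and only if the weight-zero component $\init_w\mr J$ does, since any weight-$\lambda$ monomial in $\mr R$ is a weight-zero Laurent monomial in the $y_{a,b}$ times a fixed weight-$\lambda$ monomial. Under the identification $\mr R^H \cong \bC[y_{a,b}^{\pm 1}]_{(a,b)\in D^*}$, the $w$-grading on $\mr R^H$ matches the $\rho(w)$-grading from~\eqref{eq:fanprojection}, so $\init_w\mr J = \init_{\rho(w)}\mr J$ as required.
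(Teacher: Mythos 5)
Your proof is correct and takes essentially the same route as the paper's. The paper treats Theorem~\ref{thm:TropM} as a near-immediate consequence of Theorem~\ref{thm:main} via the projection $\rho$ and the one-sentence observation that $\init_w I$ is monomial-free if and only if $\init_{\rho(w)}\mr J$ is, obtained "by comparing~\eqref{eq:yab} and~\eqref{eq:fanprojection}"; the identification $C^*_{T,\vv}=\rho(C_{T,\vv})$ and Proposition~\ref{prop:spaceofASPTs} do the rest. You unpack the same steps more explicitly: the $H$-equivariance argument showing that $\init_w\mr J=(\init_w\mr I)_0$ and that one can clear a nonzero $\grad$-weight by dividing by a fixed Laurent monomial, the check that the $w$-grading on $\mr R^H\cong\bC[y_{a,b}^{\pm1}]$ is exactly the $\rho(w)$-grading, and the verification that the face structure survives the quotient by $L=\ker\rho$. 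These are all details the paper leaves implicit, and your version of them is sound, so this is a faithful elaboration rather than a different proof.
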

\begin{proof}
We are to check that $\init_w I$ is monomial-free for $w\in\bR^D$ if and only if $\init_{\rho(w)}\mr J$ is also monomial-free. This is essentially seen by comparing~\eqref{eq:yab} and~\eqref{eq:fanprojection}. Indeed, a Laurent monomial $M\in\bC[y_{a,b}^{\pm1}]_{(a,b)\in D^*}$ may also be viewed as a Laurent monomial in $\mr R$ by~\eqref{eq:yab}. Formula~\eqref{eq:fanprojection} is chosen so that the grading of $M$ with respect to $w$ (as of a monomial in the $x_{a,b}$) is equal to its grading with respect to $\rho(w)$ as of a monomial in the $y_{a,b}$. Hence, $\init_{\rho(w)} p=\init_w p$ for any $p\in\mr R^H$.

Suppose that $\init_{\rho(w)}\mr J$ is not monomial-free, i.e., $\init_{\rho(w)} \mr p$ is a Laurent monomial for some $\mr p\in\mr J$. Then, $\init_w \mr p$ is also a Laurent monomial, both in the $y_{a,b}$ and the $x_{a,b}$. Since $\mr p\in\mr I$, we may find a monomial $M$ in the $x_{a,b}$ such that $M\mr p\in I$. However, $\init_w(M\mr p)=M\init_w \mr p$ is a monomial, meaning that $\init_w I$ is also not monomial-free.

Conversely, if $\init_w I$ is not monomial-free, then $\init_w p$ is a monomial for some $p\in I$. Since $I$ is $H$-invariant, it is spanned by \textit{$H$-homogeneous} elements: those on which any element of $H$ acts by a scalar. We may thus assume that $p$ is $H$-homogeneous and, therefore, $p/M\in\mr R^H$ for any monomial $M$ occurring in $p$. Hence, $\init_{\rho(w)}(p/M)=(\init_w p)/M$  will be a Laurent monomial both in the $x_{a,b}$ and the $y_{a,b}$.
\end{proof}

Here, we remark that the paper~\cite{CoxMakh2025} deals with the ideal $J=\mr J\cap\bC[y_{a,b}]_{(a,b)\in D^*}$ in the polynomial ring and with the affine variety $\wt\cM=\Spec(\bC[y_{a,b}]_{(a,b)\in D^*}/J)$ known as the \textit{binary geometry}. However, it is clear that $\Trop\cM=\Trop\mr J=\Trop J=\Trop\wt\cM$.

In order to describe the Gr\"obner and tropical bases, we recall a set of generators of $\mr J$, termed the \textit{$u$-equations} in~\cite{AHHLT2023,AHHL2021}. For $a\prec b\prec c\prec d$, set
\[r^*_{a,b,c,d}=x_{a,c}^{-1}x_{b,d}^{-1}r_{a,b,c,d}\in\mr R^H.\]
The $r^*_{a,b,c,d}$ generate $\mr J$ and each $r^*_{a,b,c,d}$ has the form $M_1+M_2-1$, where $M_1$ and $M_2$ are Laurent monomials in the $y_{a,b}$. Moreover, each $r^*_{a,b,c,d}$ is, in fact, a polynomial in the $y_{a,b}$, meaning that $M_1$ and $M_2$ are monomials rather than just Laurent monomials. To recall the explicit combinatorial description of these monomials found in~\cite[\S 10.1.2]{AHHL2021}, we introduce several notations. 
If $(a,b)\notin D^*$ such that $a\neq b$, $a^\pp\neq b$ and $a\neq b^\pp$, we set $y_{a,b}=y_{c,d}$, where $(c,d)$ is the unique element of $\{(a,b),(b,a),(\ol a,\ol b),(\ol b,\ol a)\}\cap D^*$. For disjoint $A,B\subset N$ such that $a^\pp\neq b$ and $a\neq b^\pp$ for any $a\in A$ and $b\in B$, set $M_{A,B}=\prod_{a\in A,b\in B}y_{a,b}$. Finally, for $a,b\in N$ we define $[a,b)$ as $\{c\mid a\preceq c\prec b\}$ if $a\preceq b$, and as $\{c\mid c\succeq a\text{ or }c\prec b\}$ otherwise.  One may check directly that 
\begin{equation}\label{eq:extendedueq}
r^*_{a,b,c,d}=M_{[a,b),[c,d)}+M_{[b,c),[d,a)}-1.    
\end{equation}

The coordinate ring $\mr\cA^H$ of $\cM$ is generated by the elements $u_{a,b}^{\pm1}$, where $u_{a,b}$ is the quotient-map image of $y_{a,b}$. Substituting $y_{a,b}$ with $u_{a,b}$ in each $r^*_{a,b,c,d}$, we obtain a set of polynomial relations satisfied by the $u_{a,b}$, the \textit{extended $u$-equations}. Among them are the \textit{primitive $u$-equations} enumerated by $D^*$, those obtained from
\[r^*_{a,b}:=r^*_{a,a++,b,b++}=y_{a,b}+M_{[a^\pp,b),[b^\pp,a)}-1.\]
The special role of the primitive $u$-equations is that the elements $r^*_{a,b}$, $(a,b)\in D^*$ already generate both the ideal $\mr J$ and the ideal $J$, see \cite[Theorems 4.2 and 5.1]{AHHL2021}. 

Directly from Corollary~\ref{cor:grobnerbasis} one deduces
\begin{cor}
The expressions $r^*_{a,b,c,d}$ with $a\prec b\prec c\prec d$ form a Gr\"obner basis of both $\mr J$ and $J$ with respect to any $w\in|\Trop \cM|$.    
\end{cor}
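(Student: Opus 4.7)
The plan is to deduce the corollary directly from Corollary~\ref{cor:grobnerbasis} by lifting weights along $\rho$ and then descending to $H$-invariants. Given $w\in|\Trop\cM|$, I would first choose any preimage $w'\in\rho^{-1}(w)\subset\bR^D$. The equivalence between monomial-freeness of $\init_w\mr J$ and $\init_{w'}I$ noted before Theorem~\ref{thm:TropM} guarantees $w'\in|\Trop I|$, so Corollary~\ref{cor:grobnerbasis} applies and $\{r_{a,b,c,d}\}_{a\prec b\prec c\prec d}$ is a Gröbner basis of $I$ with respect to $w'$. Extending scalars to $\mr R$ and multiplying each relation by the invertible monomial $x_{a,c}^{-1}x_{b,d}^{-1}$ (which is an $\mr R$-linear automorphism compatible with the $w'$-grading up to an additive shift), the scaled expressions $r^*_{a,b,c,d}$ form a Gröbner basis of $\mr I$ with respect to $w'$.

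The essential step is to descend from $\mr R$ to the invariant subring $\mr R^H$. The torus action~\eqref{eq:torus} scales each variable $x_{a,b}$, so it preserves the $w'$-grading and commutes with passage to initial forms; hence $\init_{w'}\mr I$ is $H$-invariant. Its $H$-invariant part $\init_{w'}\mr I\cap\mr R^H$ coincides with $\init_w\mr J$, because by~\eqref{eq:fanprojection} the restriction of the $w'$-grading to $\mr R^H$ is precisely the $w$-grading. The relations $r^*_{a,b,c,d}$ and their initial forms already lie in $\mr R^H$, so for any $f\in\init_w\mr J$ and any representation $f=\sum_i m_i\cdot\init_w r^*_{a_i,b_i,c_i,d_i}$ with $m_i\in\mr R$, one may replace each $m_i$ by its projection onto the trivial $H$-isotypic component $(m_i)_0\in\mr R^H$ without changing the sum, since every $\init_w r^*_{a_i,b_i,c_i,d_i}$ and the total $f$ are $H$-invariant. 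This proves that $\{r^*_{a,b,c,d}\}$ is a Gröbner basis of $\mr J$ with respect to $w$. The corresponding statement for $J$ then follows because by~\eqref{eq:extendedueq} each $r^*_{a,b,c,d}$ is in fact a regular polynomial (not merely a Laurent polynomial) in the $y_{a,b}$, so its $w$-initial form is also a polynomial and generates the corresponding initial ideal $\init_w J=\init_w\mr J\cap\bC[y_{a,b}]_{(a,b)\in D^*}$.

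I expect the only substantive step to be the $H$-invariant descent in the middle paragraph; everything else amounts to unpacking the definitions of $\rho$, $\mr J$ and $r^*_{a,b,c,d}$, and invoking the previously established Gröbner basis statement. The descent itself is a routine averaging argument in the isotypic decomposition of a Laurent polynomial ring under a torus action, so no genuine difficulty is anticipated.
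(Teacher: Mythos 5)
Your argument is exactly the deduction the paper has in mind (the paper only remarks that the corollary follows ``directly'' from Corollary~\ref{cor:grobnerbasis}), and you fill in the one genuinely nontrivial step correctly: lifting $w$ to any $w'\in\rho^{-1}(w)$, observing that $\init_{w'}\mr I$ is $H$-stable because the torus action~\eqref{eq:torus} scales the variables and hence preserves the $w'$-grading, and then projecting the coefficients $m_i$ onto the trivial isotypic component to descend the Gr\"obner basis from $\mr I$ to $\mr J=\mr I\cap\mr R^H$. The identification of the restricted $w'$-grading on $\mr R^H$ with the $w$-grading via~\eqref{eq:fanprojection} is precisely the computation used just before Theorem~\ref{thm:TropM}, so this all matches.

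One caveat about the last step concerning $J$: the equality $\init_w J=\init_w\mr J\cap\bC[y_{a,b}]_{(a,b)\in D^*}$ is not automatic for a non-homogeneous ideal, even a saturated one --- in general one only has $\init_w J\subseteq\init_w\mr J\cap\bC[y]$, and the initial ideal of a saturated ideal need not remain saturated. What you actually need is that the $\bC[y]$-ideal $K$ generated by the $\init_w r^*_{a,b,c,d}$ (which a priori sits between $\init_w J$ and its saturation $\init_w\mr J\cap\bC[y]$) equals $\init_w J$; this can be verified by checking that $K$ is already saturated with respect to $\prod y_{a,b}$. Since the paper itself treats this corollary as immediate, it likely glosses over the same point, so this is a minor completeness remark rather than a flaw in your approach.
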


Next, for $1\le i<j<k\le n$ we consider the element
\[s^*_{i,j,k}=x_{i,k}^{-1}x_{i,\ol j}^{-1}x_{j,\ol k}^{-1}s_{i,j,k}=M_{[j,k),[\ol k,i)}+M_{[i,j),[\ol j,\ol k)}+M_{[i,j),[k,\ol i)}-2.\]
From Corollary~\ref{cor:tropicalbasis} we obtain
\begin{cor}
The expressions $r^*_{a,b,c,d}$ with $a\prec b\prec c\prec d$, together with the expressions $s^*_{i,j,k}$ for all $1\le i<j<k\le n$, form a tropical basis of both $\mr J$ and $J$.
\end{cor}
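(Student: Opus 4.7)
The plan is to transfer Corollary~\ref{cor:tropicalbasis} through the linear surjection $\rho\colon\bR^D\to\bR^{D^*}$ of \eqref{eq:fanprojection}. First, I would note that the proposed collection really generates the ideal: the $r^*_{a,b,c,d}$ already generate $\mr J$ by the discussion preceding the corollary, and since they are polynomials in the $y_{a,b}$ (as visible from \eqref{eq:extendedueq}), they also lie in and generate $J=\mr J\cap\bC[y_{a,b}]_{(a,b)\in D^*}$; adjoining the $s^*_{i,j,k}$, which are also polynomials in the $y_{a,b}$, does not disturb this.

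The key technical point is a correspondence of initial forms on the two sides. Both $r_{a,b,c,d}$ and $s_{i,j,k}$ are $\grad$-homogeneous, so multiplying them by the indicated Laurent monomials lands us in $\mr R^H\cong\bC[y_{a,b}^{\pm1}]_{(a,b)\in D^*}$. The $w$-weights of the monomials appearing in $r_{a,b,c,d}$ (respectively $s_{i,j,k}$) differ from the $\rho(w)$-weights of the corresponding monomials in $r^*_{a,b,c,d}$ (respectively $s^*_{i,j,k}$) by a single additive constant, namely the $w$-weight of the monomial used to normalize. Consequently, the subset of monomials achieving the maximum weight is the same on the two sides, so $\init_w r_{a,b,c,d}$ is a monomial if and only if $\init_{\rho(w)}r^*_{a,b,c,d}$ is, and analogously for $s_{i,j,k}$ versus $s^*_{i,j,k}$.

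Combining the two inputs, given $w^*\in\bR^{D^*}$ one lifts it along $\rho$ to some $w\in\bR^D$. The paragraph preceding Theorem~\ref{thm:TropM} then yields $w^*\in|\Trop\mr J|\iff w\in|\Trop I|$. By Corollary~\ref{cor:tropicalbasis} the right-hand side is equivalent to no $\init_w r_{a,b,c,d}$ and no $\init_w s_{i,j,k}$ being a monomial, which by the previous paragraph translates verbatim into the analogous statement for $w^*$, the $r^*_{a,b,c,d}$ and the $s^*_{i,j,k}$. This proves the tropical basis property for $\mr J$; the statement for $J$ then follows from $\Trop J=\Trop\mr J$ (the final remark of Section~\ref{sec:prelim}) together with the fact that the proposed generators already sit in $J$. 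The only point requiring even mild care is keeping the gradings aligned on the two sides, which amounts to the observation that $\rho$ is the dual of the inclusion $L^\perp\hookrightarrow\bZ^D$; no substantive obstacle arises, since the real work is already done by Corollary~\ref{cor:tropicalbasis}.
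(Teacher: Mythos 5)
Your proposal is correct and follows essentially the same route the paper intends: the paper gives no explicit proof of this corollary (it is stated as an immediate consequence of Corollary~\ref{cor:tropicalbasis}), and the argument it implies is precisely the transfer along $\rho$ that you spell out, using the $\grad$-homogeneity of $r_{a,b,c,d}$ and $s_{i,j,k}$ together with the weight correspondence between $w$ on $\mr R^H\subset\mr R$ and $\rho(w)$ on $\bC[y_{a,b}^{\pm1}]$ that the paper records just before Theorem~\ref{thm:TropM}. The only cosmetic slip is a sign: the additive constant relating the two weight scales is the $w$-weight of $x_{a,c}x_{b,d}$ (the monomial you divide by), not of the inverse monomial $x_{a,c}^{-1}x_{b,d}^{-1}$, but this has no bearing on which monomials attain the maximum.
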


\begin{example}
Let $n=3$. We have 6 generators: $y_{1,3}$, $y_{1,\ol 1}$, $y_{1,\ol 2}$, $y_{2,\ol 2}$, $y_{2,\ol 3}$, $y_{3,\ol 3}$. There are 9 different relations of the form $r^*_{a,b,c,d}$, three of which are
\begin{gather*}
r^*_{1,2,3,\ol1}=r^*_{1,3}=y_{1,3}+y_{1,\ol 2}y_{2,\ol 2}y_{2,\ol 3}-1,\quad r^*_{1,2,\ol 1,\ol 2}=r^*_{1,\ol 1}=y_{1,\ol 1}+y_{2,\ol 2}y_{2,\ol 3}^2y_{3,\ol 3}-1,\\
r^*_{1,2,3,\ol 2}=y_{1,3}y_{1,\ol 1}+y_{2,\ol 2}y_{2,\ol 3}-1.    
\end{gather*}
The other 6 are obtained from these by iterations of the circular shift $y_{a,b}\mapsto y_{a^\pp,b^\pp}$. Meanwhile, the only element of the form $s^*_{i,j,k}$ is 
\[s^*_{1,2,3}=y_{2,\ol 3}+y_{1,\ol 2}+y_{1,3}-2.\]
\end{example}

Now, consider an arbitrary full rank geometric type cluster algebra of type $\rC_{n-1}$. That is, the algebra $\cA(\tilde B)$ given by an $(n-1+m)\times(n-1)$ extended exchange matrix $\tilde B$ of type $\rC_{n-1}$ and rank $n-1$, see~\cite[\S~2]{AHHL2021}. This algebra is generated by $n(n-1)$ cluster variables indexed by $D^*$, together with $m$ frozen variables and their inverses. Hence, $\cA(\tilde B)$ is the quotient of $\bC[x_1^{\pm1},\dots,x_m^{\pm1}][x_{a,b}]_{(a,b)\in D^*}$ by an ideal $I(\tilde B)$. This realizes the tropicalization of the cluster variety $X(\tilde B)=\Spec\cA(\tilde B)$ as the fan $\Trop I(\tilde B)$ in $\bR^{|D^*|+m}$. As shown in~\cite{LamSpe2022}, the group $H(\tilde B)$ of all automorphisms of $\cA(\tilde B)$ that act by scaling the cluster and frozen variables is an $m$-dimensional abelian algebraic group. This implies that the lineality space of $\Trop X(\tilde B)$ has dimension $m$. 

Let $\mr\cA(\tilde B)$ be the localization of $\cA(\tilde B)$ in all cluster variables and $\mr X(\tilde B)=\Spec\mr\cA(\tilde B)$ be the very affine part of $X(\tilde B)$. 
By~\cite[Theorem 4.2]{AHHL2021} the GIT quotient $\mr X(\tilde B)//H(\tilde B)=\Spec\mr\cA(\tilde B)^{H(\tilde B)}$ is isomorphic to $\cM$ for any such $\tilde B$. 
However, $\mr\cA(\tilde B)^{H(\tilde B)}$ is the quotient of $\cR=\bC[x_i^{\pm1},x_{a,b}^{\pm1}]^{H(\tilde B)}$ by its intersection with the defining ideal of $\mr X(\tilde B)$. Furthermore, $\cR$ is a ring of Laurent polynomials generated by $|D^*|$ monomials in the $x_i$ and $x_{a,b}$. Thus, we obtain another embedding of $\cM$ into $(\bC^*)^{|D^*|}$ and, similar to Theorem~\ref{thm:TropM}, a linear surjection of $\Trop X(\tilde B)$ onto $\Trop\cM$. Here, $\Trop\cM$ is taken with respect to the new embedding, however, any two very affine embeddings of $\cM$ provide linearly equivalent tropicalizations, see~\cite[Proposition 6.4.4]{MacStu2015}.

\begin{cor}\label{cor:fullrank}
The quotient of $\Trop X(\tilde B)$ by its $m$-dimensional lineality space is linearly equivalent to the fan $\Trop\cM$ described by Theorem~\ref{thm:TropM}.
\end{cor}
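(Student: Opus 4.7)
The plan is to assemble the ingredients laid out in the paragraphs preceding the corollary into a chain of linear equivalences. I would first fix a choice of $|D^*|$ Laurent-monomial generators $y_1,\dots,y_{|D^*|}$ of the invariant subring $\cR=\bC[x_i^{\pm1},x_{a,b}^{\pm1}]^{H(\tilde B)}$, with exponent vectors $v_1,\dots,v_{|D^*|}\in\bZ^{|D^*|+m}$. The inclusion $\cR\hookrightarrow\bC[x_i^{\pm1},x_{a,b}^{\pm1}]$ corresponds to a surjection of algebraic tori, whose tropicalization is the $\bR$-linear surjection $\rho^{\tilde B}\colon\bR^{|D^*|+m}\twoheadrightarrow\bR^{|D^*|}$ sending $w$ to $(\al w,v_j\ar)_{j=1}^{|D^*|}$. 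The whole proof then reduces to identifying the kernel and the image of $\rho^{\tilde B}$ correctly, after which a standard comparison result for very affine embeddings of $\cM$ finishes the job.

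Next I would check that $\ker\rho^{\tilde B}$ coincides with the $m$-dimensional lineality space $L(\tilde B)$ of $\Trop X(\tilde B)$. The kernel is, by construction, the cocharacter lattice of $H(\tilde B)$ tensored with $\bR$, so it is certainly contained in $L(\tilde B)$ since the $H(\tilde B)$-action preserves $I(\tilde B)$. Equality then holds by dimension count: both spaces have dimension $m$, with the lineality dimension of $\Trop X(\tilde B)$ being $m$ as noted in the text (using full rank of $\tilde B$ via~\cite{LamSpe2022}). Then I would argue that $\rho^{\tilde B}$ carries $\Trop X(\tilde B)$ onto $\Trop\cM'$, where $\Trop\cM'$ denotes the tropicalization of $\cM$ with respect to the embedding $\cM\hookrightarrow(\bC^*)^{|D^*|}$ induced by the images of the $y_j$. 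The isomorphism $\mr X(\tilde B)/H(\tilde B)\cong\cM$ from~\cite[Theorem~4.2]{AHHL2021} realizes the defining ideal of $\cM'$ as the contraction of the defining ideal of $\mr X(\tilde B)$ along $\cR\hookrightarrow\bC[x_i^{\pm1},x_{a,b}^{\pm1}]$. Under such contractions of Laurent polynomial rings, initial ideals transport correctly, and one obtains that $\init_w I(\tilde B)$ is monomial-free if and only if $\init_{\rho^{\tilde B}(w)}(I(\tilde B)\cap\cR)$ is, which is exactly the equivalence $\rho^{\tilde B}(|\Trop X(\tilde B)|)=|\Trop\cM'|$ together with $\ker\rho^{\tilde B}\subset|\Trop X(\tilde B)|$.

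Combining these two steps yields an identification of fans $\Trop X(\tilde B)/L(\tilde B)\cong\Trop\cM'$. To conclude, I would invoke~\cite[Proposition~6.4.4]{MacStu2015}: the two very affine embeddings of $\cM$---the one coming from the $u$-coordinates as in Theorem~\ref{thm:TropM} and the one coming from the generators $y_j$---produce linearly equivalent tropicalizations. Composing with the previous identification gives the required linear equivalence of $\Trop X(\tilde B)/L(\tilde B)$ with the space of ASPTs modulo lineality.

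The argument is not conceptually difficult; the main obstacle, and the reason the full-rank hypothesis enters, is the dimension-count step that pins down $L(\tilde B)=\ker\rho^{\tilde B}$. Everything else is the standard transport of initial ideals under the projection of invariant rings along a torus quotient, analogous to the proof of Theorem~\ref{thm:TropM} with $\rho$ replaced by $\rho^{\tilde B}$.
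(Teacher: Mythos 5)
Your proposal is correct and follows essentially the same route as the paper: project along the invariant Laurent subring $\cR$, identify the kernel of the resulting linear surjection with the lineality space of $\Trop X(\tilde B)$, and then invoke~\cite[Proposition~6.4.4]{MacStu2015} to compare the two very affine embeddings of $\cM$. You spell out the dimension-count step identifying $\ker\rho^{\tilde B}$ with $L(\tilde B)$ slightly more explicitly than the paper does, but the argument is the same.
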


\subsection{Sign patterns}\label{sec:signpatterns}

The real part of the cluster configuration space $\cM\subset(\bC^*)^{D^*}$ is the real very affine variety $\cM(\bR)=\cM\cap(\bR^*)^{D^*}$. In~\cite[Proposition 7.6]{AHHL2021} it is shown that the number of connected components in $\cM(\bR)$ is equal to $2^{n-2}(n+1)(n-1)!$. In \cite[\S~11.3]{AHHL2021} it is conjectured that the number of sign patterns occurring in $\cM$ is also $2^{n-2}(n+1)(n-1)!$. The goal of this subsection is to prove this conjecture. Since the number of connected components is no less than the number of occurring sign patterns, it suffices to find $2^{n-2}(n+1)(n-1)!$ distinct occurring sign patterns.

Let $\alpha=(\alpha_1:\alpha_2)$, $\beta=(\beta_1:\beta_2)$, $\gamma=(\gamma_1:\gamma_2)$, $\delta=(\delta_1:\delta_2)$ be four pairwise distinct points in $\bC\bP^1$. Recall that their cross-ratio is defined as
\begin{equation}\label{eq:crossratio}
(\alpha,\beta;\gamma,\delta)=\frac{
\begin{vsmallmatrix}\alpha_1&\gamma_1\\\alpha_2&\gamma_2\end{vsmallmatrix} \begin{vsmallmatrix}\beta_1&\delta_1\\\beta_2&\delta_2\end{vsmallmatrix}}
{\begin{vsmallmatrix}\alpha_1&\delta_1\\\alpha_2&\delta_2\end{vsmallmatrix}
\begin{vsmallmatrix}\beta_1&\gamma_1\\\beta_2&\gamma_2\end{vsmallmatrix}}\in\bC^*.    
\end{equation}
If all four points are real, then the cross-ratio is also real with its sign determined as follows: it is negative if and only if $\alpha$ and $\beta$ lie in distinct connected components of $\bR\bP^1\bs\{\gamma,\delta\}$. Visualizing $\bR\bP^1$ as a circle, one may say that $(\alpha,\beta;\gamma,\delta)$ is negative if and only if the chord connecting $\alpha$ and $\beta$ crosses the chord connecting $\gamma$ and $\delta$. 

Next, \textit{dihedral orderings} are orderings of $N$ considered up to circular shifts and reversals. There are $(2n-1)!/2$ dihedral orderings in total. With a labeling $\ph$ of the polygon $P$ one naturally associates the dihedral ordering $\la(\ph)$ obtained by reading the labels in $\ph$ clockwise or counter-clockwise, starting from any side. Two labelings provide the same dihedral ordering if and only if they can be identified by the natural action of $D_{4n}$ on $P$.

Let $(\alpha_a)_{a\in N}$ be a tuple of pairwise distinct points in $\bR\bP^1$ indexed by $N$. Since $\bR\bP^1$ is a circle, such a tuple can be viewed as a labeling of $2n$ points in the circle by $N$. This defines a dihedral ordering, we will call it the dihedral ordering \textit{defined by} the tuple $(\alpha_a)_{a\in N}$. The above discussion shows that this dihedral ordering determines the sign of the cross-ratio of any four points in the tuple. The converse is also true; in fact, it suffices to consider only $n(2n-3)$ out of the ${2n}\choose4$ cross-ratios.
\begin{prop}\label{prop:dihedralcoords}
Let $(\alpha_a)_{a\in N}$ and $(\beta_a)_{a\in N}$ be two tuples of pairwise distinct points in $\bR\bP^1$. The dihedral orderings defined by these tuples coincide if and only if for every pair $a\prec b$ such that $a^\pp\neq b$ and $a\neq b^\pp$, the cross-ratios $(\alpha_a,\alpha_{a^\pp};\alpha_{b^\pp},\alpha_b)$ and $(\beta_a,\beta_{a^\pp};\beta_{b^\pp},\beta_b)$ have the same sign.
\end{prop}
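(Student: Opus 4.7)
The ``only if'' direction is immediate from the geometric interpretation of the cross-ratio: for four distinct real points of $\bR\bP^1$, the sign of $(\gamma_1,\gamma_2;\gamma_3,\gamma_4)$ depends only on their cyclic arrangement on the circle, being negative precisely when the chord joining $\gamma_1$ to $\gamma_2$ crosses the chord joining $\gamma_3$ to $\gamma_4$. Hence tuples defining the same dihedral ordering produce identical cross-ratio signs across all quadruples, in particular the specified ones.

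The substance of the proposition is the ``if'' direction, and my plan is to reconstruct the dihedral ordering from the sign data. Viewing the tuple $(\alpha_a)_{a\in N}$ as a placement of $2n$ distinct points on the circle, consider the \textit{successor chords} $e_a=\{\alpha_a,\alpha_{a^\pp}\}$ for $a\in N$; these form a closed Hamiltonian polygon $H$ inscribed in the disk. A direct check shows that the conditions $a\prec b$, $a^\pp\neq b$, $a\neq b^\pp$ pick out exactly one ordered representative from each unordered pair of vertex-disjoint edges of $H$, and that the sign of $(\alpha_a,\alpha_{a^\pp};\alpha_{b^\pp},\alpha_b)$ is negative if and only if $e_a$ crosses $e_b$. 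So the hypothesis amounts to the equality of the crossing patterns of the two Hamiltonian polygons associated to $(\alpha_a)$ and $(\beta_a)$.

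The first substep is to argue that the crossing pattern of $H$ encodes, for each $a\in N$, the unordered bipartition of $N\setminus\{a,a^\pp\}$ into the two open arcs cut out by $e_a$. Traversing $H$ starting at $\alpha_{a^\pp}$ along its remaining $2n-1$ edges back to $\alpha_a$, each traversed edge either crosses $e_a$, toggling which of the two arcs contains the current vertex, or does not cross it, preserving that side. So the parity of crossings accumulated up to any vertex $\alpha_c$, $c\neq a,a^\pp$, records which arc of $e_a$ contains $\alpha_c$ relative to the initial side, giving the desired bipartition up to an ambiguous global swap of the two arcs (which is exactly the ambiguity already absorbed into the dihedral action). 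Applying this for every $a$ yields a collection of $2n$ bipartitions, extracted purely from the sign data.

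I expect the main obstacle to be the second substep: showing that this family of $2n$ bipartitions determines the cyclic order of $N$ up to dihedral symmetry. A workable strategy is induction on $n$. The base case $n=3$ can be dispatched by finite enumeration of the $60$ dihedral orderings. For the inductive step, the plan is to extract adjacency information from the bipartitions---for instance, a successor chord $e_a$ whose bipartition has a singleton part $\{x\}$ forces $x$ to be the unique vertex in that arc of $e_a$, and hence to be circularly adjacent to both $\alpha_a$ and $\alpha_{a^\pp}$---excise such a label, and apply the inductive hypothesis after updating the successor structure, so that the two successor chords incident to the deleted vertex are merged into a single new one, whose bipartition is reconstructible from the original data. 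The delicate bookkeeping is in arguing that a singleton-part chord (or an analogous local feature) is always available and that the induction is consistent under the merging operation.
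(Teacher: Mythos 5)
Your reduction of the hypothesis to equality of crossing patterns of the two ``successor'' Hamiltonian polygons is correct (the pairs $a\prec b$ with $a^\pp\neq b$, $a\neq b^\pp$ do index the $n(2n-3)$ vertex-disjoint pairs of successor chords, and the sign of $(\alpha_a,\alpha_{a^\pp};\alpha_{b^\pp},\alpha_b)$ is negative precisely when the chords cross), and the parity argument recovering, for each successor chord, the unordered arc bipartition of the remaining $2n-2$ points is sound. However, the proposed induction has a genuine gap: a successor chord with a singleton (or empty) arc need not exist. For $2n=8$, place the points so that the successor cycle visits circular positions $0,3,6,1,4,7,2,5$ in order; then every successor chord has circular span $3$, hence arcs of sizes $2$ and $4$, and there is no singleton to excise. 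Thus the ``analogous local feature'' you flag as delicate is in fact essential and is not supplied, so the combinatorial core of the ``if'' direction remains open in this route.

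The paper takes a genuinely different and much shorter path: it cites \cite[Lemma 2.2]{Bro2009}, which expresses any cross-ratio $(\alpha_a,\alpha_b;\alpha_c,\alpha_d)$, up to an index-dependent but $\alpha$-independent sign, as a monomial in the primitive cross-ratios $(\alpha_x,\alpha_{x^\pp};\alpha_{y^\pp},\alpha_y)$ and their inverses. Using the same monomial with $\beta$ in place of $\alpha$, equality of all primitive signs forces equality of all cross-ratio signs, which is incompatible with a pair $a_0,b_0$ adjacent in one dihedral ordering but not the other. To complete your self-contained combinatorial version you would need a full reconstruction argument---for instance, fixing a basepoint and determining each element's circular position by walking along $H$ and comparing arc sizes and memberships of consecutive successor chords, then checking consistency---none of which is carried out in the proposal.
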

\begin{proof}
The ``only if'' part was already discussed above. For the converse, suppose that the dihedral orderings are distinct. This means that there are $a_0,b_0\in N$ that are adjacent in the dihedral ordering defined by $(\alpha_a)_{a\in N}$ but not in that defined by $(\beta_a)_{a\in N}$. Consequently, we have $c_0,d_0\in N$ for which $(\alpha_{a_0},\alpha_{b_0};\alpha_{c_0},\alpha_{d_0})>0$ but $(\beta_{a_0},\beta_{b_0};\beta_{c_0},\beta_{d_0})<0$. It remains to recall that by~\cite[Lemma 2.2]{Bro2009}, $(\alpha_{a_0},\alpha_{b_0};\alpha_{c_0},\alpha_{d_0})$ can, up to sign, be expressed as a product of the $(\alpha_a,\alpha_{a^\pp};\alpha_{b^\pp},\alpha_b)$ and their inverses. Since the same product with $\alpha$ replaced by $\beta$ equals $(\beta_{a_0},\beta_{b_0};\beta_{c_0},\beta_{d_0})$, we conclude that $(\alpha_a,\alpha_{a^\pp};\alpha_{b^\pp},\alpha_b)$ and $(\beta_a,\beta_{a^\pp};\beta_{b^\pp},\beta_b)$ have different signs for at least one pair $a,b$.
\end{proof}


Next, a labeling $\ph$ of $P$ is \textit{centrally symmetric} if the sides $\ph(a)$ and $\ph(\ol a)$ are opposite to each other for every $a\in N$. A \textit{centrally symmetric dihedral ordering} (or \textit{CSDO}) is a dihedral ordering of the form $\la(\ph)$ for a centrally symmetric labeling $\ph$ of $P$. There are $2^{n-2}(n-1)!$ distinct CSDOs. Every CSDO $\la$ determines a sign pattern $\nu(\la)\in\{1,-1\}^{D^*}$: consider a tuple $(\alpha_a)_{a\in N}$ of points in $\bP^1$ that defines $\la$ and set
\[\nu(\la)_{a,b}=\sgn(\alpha_a,\alpha_{a^\pp};\alpha_{b^\pp},\alpha_b)=\sgn(\alpha_{\ol a},\alpha_{\ol a^\pp};\alpha_{\ol b^\pp},\alpha_{\ol b}).\]
By Proposition~\ref{prop:dihedralcoords}, for CSDOs $\la_1\neq \la_2$ we have $\nu(\la_1)\neq\nu(\la_2)$. 

Now, consider pairwise distinct points $(\alpha_i)_{i\in [1,n]}$ in $\bR\bP^1$ with $\alpha_i=(\alpha_{1,i}:\alpha_{2,i})$. In addition, assume that $\alpha_i\neq(\alpha_{2,j}:-\alpha_{1,j})$ for $i,j\in[1,n]$. For $i\in[1,n]$, set $\alpha_{1,\ol i}=-\alpha_{2,i}$, $\alpha_{2,\ol i}=\alpha_{1,i}$ and $\alpha_{\ol i}=(\alpha_{1,\ol i}:\alpha_{2,\ol i})$. By Proposition~\ref{prop:AinS1}, the point 
\begin{equation}\label{eq:csdopreimage}
\left(\begin{vsmallmatrix}\alpha_{1,a}&\alpha_{1,b}\\\alpha_{2,a}&\alpha_{2,b}\end{vsmallmatrix}\right)_{(a,b)\in D}\in (\bC^*)^D
\end{equation}
lies in $\mr X$. By comparing~\eqref{eq:yab} and~\eqref{eq:crossratio}, we see that the point 
\[((\alpha_a,\alpha_{a^\pp};\alpha_{b^\pp},\alpha_b))_{(a,b)\in D^*}\in(\bR^*)^{D^*}\] 
lies in $\cM(\bR)$.
 
Evidently, the dihedral ordering defined by the tuple $(\alpha_a)_{a\in N}$ is a CSDO and any CSDO can be obtained from a tuple of this form. We obtain the following.
\begin{prop}\label{prop:CSpatterns}
Each of the $2^{n-2}(n-1)!$ pairwise distinct sign patterns of the form $\nu(\la)$ with $\la$ a CSDO occurs in $\cM$.
\end{prop}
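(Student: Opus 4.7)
The plan is to verify the three components of the proposition: (a) there are exactly $2^{n-2}(n-1)!$ CSDOs; (b) distinct CSDOs yield distinct sign patterns $\nu(\la)$; and (c) each such sign pattern is realized by a point of $\cM(\bR)$. Steps (a) and (b) are short; the substance of the proposition lies in (c).

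For (a), a centrally symmetric labeling of $P$ amounts to a bijection from $[1,n]$ to the $n$ pairs of opposite sides ($n!$ choices) together with a choice of which element in each pair carries the unbarred label ($2^n$ choices), for a total of $2^n \cdot n!$ labelings. Two labelings yield the same dihedral ordering iff they differ by an element of the symmetry group $D_{4n}$ of $P$; since $D_{4n}$ acts faithfully on the set of sides (for $n \ge 3$), it acts freely on labelings, and the quotient has size $2^n n!/(4n)=2^{n-2}(n-1)!$. For (b), if $\la_1\ne\la_2$ are CSDOs realized by tuples $(\alpha_a)$ and $(\beta_a)$, Proposition~\ref{prop:dihedralcoords} furnishes some $(a,b)$ with $a\prec b$, $a^\pp\ne b$, $a\ne b^\pp$ on which the cross-ratios $(\alpha_a,\alpha_{a^\pp};\alpha_{b^\pp},\alpha_b)$ and $(\beta_a,\beta_{a^\pp};\beta_{b^\pp},\beta_b)$ have opposite signs, whence $\nu(\la_1)\ne\nu(\la_2)$.

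For (c), given a CSDO $\la$, I would parametrize $\bR\bP^1$ by $\theta\in[0,\pi)$ via $\alpha(\theta)=(\cos\theta:\sin\theta)$, so that the algebraic involution $(\alpha_1:\alpha_2)\mapsto(-\alpha_2:\alpha_1)$ becomes the antipodal map $\theta\mapsto(\theta+\pi/2)\bmod\pi$ of this circle. Then I would pick $\theta_1,\ldots,\theta_n\in[0,\pi/2)$ in any relative order compatible with $\la$ restricted to a section of the pairing $\{i,\ol i\}\subset N$, set $\theta_{\ol i}=\theta_i+\pi/2$, and take $\alpha_a=\alpha(\theta_a)$. The resulting $2n$ points in $\bR\bP^1$ are pairwise distinct, satisfy the required algebraic antipodality $\alpha_{\ol i}=(-\alpha_{2,i}:\alpha_{1,i})$, and define exactly the dihedral ordering $\la$. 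The construction in the paragraph immediately preceding the proposition then produces a point of $\cM(\bR)$ whose $u$-coordinates have sign pattern exactly $\nu(\la)$, as desired.

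The main obstacle, such as it is, lies in step (c): one must match the algebraic involution $(\alpha_1:\alpha_2)\mapsto(-\alpha_2:\alpha_1)$ with the combinatorial operation $i\leftrightarrow\ol i$ on $N$. The key observation is that the former is precisely the antipodal map of the circle $\bR\bP^1$ when viewed as having circumference $\pi$; after this identification, realizing any CSDO by an antipodally arranged tuple becomes routine.
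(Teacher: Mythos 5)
Your proposal follows essentially the same approach as the paper: the paper's proof consists precisely of the realization construction in the two paragraphs preceding the Proposition (pick distinct $\alpha_i\in\bR\bP^1$, set $\alpha_{\ol i}=(-\alpha_{2,i}:\alpha_{1,i})$, observe the resulting minor/cross-ratio point lies in $\mr X$, hence in $\cM(\bR)$, and note that any CSDO arises this way), together with the distinctness statement furnished by Proposition~\ref{prop:dihedralcoords}. Your steps (b) and (c) reproduce exactly this; your step (a) supplies the count $2^n\cdot n!/|D_{4n}|=2^{n-2}(n-1)!$ that the paper asserts without proof, which is a welcome addition.

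One point to sharpen in (c): as written, ``pick $\theta_1,\dots,\theta_n\in[0,\pi/2)$ and set $\theta_{\ol i}=\theta_i+\pi/2$'' reads as though the angle of $\alpha_i$ is forced into $[0,\pi/2)$ for every $i\in[1,n]$, which would only realize CSDOs in which $1,\dots,n$ occupy $n$ consecutive positions. Not every CSDO has this property; however, every CSDO does have \emph{some} $n$ consecutive positions forming a section (since positions $k$ and $k+n$ carry $a$ and $\ol a$), and since the involution $(\alpha_1:\alpha_2)\mapsto(-\alpha_2:\alpha_1)$ is its own inverse on $\bR\bP^1$, it is immaterial whether the freely chosen angle is assigned to $i$ or to $\ol i$. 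Your mention of ``a section of the pairing'' suggests you have this in mind, but the indexing of the $\theta$'s by $[1,n]$ and the restriction to $[0,\pi/2)$ should be stated so as to make this explicit --- for instance by choosing the section to be a contiguous arc of $\la$ and assigning the angles to the section elements rather than to $1,\dots,n$.
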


To construct the remaining sign patterns, we consider the second realization of $\cA$ provided by~\cite[Proposition 12.13]{FomZel2003}.
\begin{prop}\label{prop:AinS2}
There is an injective homomorphism $\iota'$ from $\cA$ to $S$ defined on the generators as follows:
\begin{align*}
\iota'(\Delta_{i,j}) & = z_{1,i}z_{2,j}-z_{1,j}z_{2,i},\quad 1\le i<j\le n,\\
\iota'(\Delta_{i,\ol j}) & = \mathrm i(z_{1,i}z_{2,j}+z_{1,j}z_{2,i}),\quad 1\le i\le j\le n.
\end{align*}
\end{prop}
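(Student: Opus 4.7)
The plan is to realize $\iota'$ as a composition $\iota' = \phi \circ \iota$, where $\iota$ is the embedding of Proposition~\ref{prop:AinS1} and $\phi\colon S\to S$ is a suitable algebra automorphism acting uniformly on each pair $(z_{1,i},z_{2,i})$. Once such a $\phi$ is produced, injectivity of $\iota'$ is immediate from the injectivity of $\iota$ and the invertibility of $\phi$, while the identity $\iota'=\phi\circ\iota$ checked on generators $\Delta_{a,b}$ simultaneously certifies that $\iota'$ is well-defined as a homomorphism out of $\cA=R/I$.

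I look for $\phi$ of the form
\[
\phi(z_{1,i}) \;=\; a\,z_{1,i} + b\,z_{2,i}, \qquad \phi(z_{2,i}) \;=\; c\,z_{1,i} + d\,z_{2,i}, \qquad i\in[1,n],
\]
with scalars $a,b,c,d\in\bC$ independent of $i$; such a prescription extends uniquely to an algebra endomorphism of $S$. The condition $\phi(\iota(\Delta_{i,j}))=\iota'(\Delta_{i,j})$ for $1\le i<j\le n$ says that $\phi$ preserves every $2\times 2$ minor $z_{1,i}z_{2,j}-z_{1,j}z_{2,i}$, which is equivalent to $ad-bc=1$; in particular $\phi$ is then automatically an automorphism of $S$. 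The remaining requirement $\phi(\iota(\Delta_{i,\ol j}))=\iota'(\Delta_{i,\ol j})$ for $1\le i\le j\le n$ reads
\[
\phi\bigl(z_{1,i}z_{1,j}+z_{2,i}z_{2,j}\bigr) \;=\; \mathrm{i}\,\bigl(z_{1,i}z_{2,j}+z_{1,j}z_{2,i}\bigr),
\]
and, upon expansion, matching the coefficients of $z_{1,i}z_{1,j}$, $z_{2,i}z_{2,j}$ and $z_{1,i}z_{2,j}+z_{1,j}z_{2,i}$ yields the three scalar constraints $a^2+c^2=0$, $b^2+d^2=0$, and $ab+cd=\mathrm{i}$ (the case $i=j$ is subsumed by these same equations).

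A solution consistent with $ad-bc=1$ is $(a,b,c,d)=(1,\mathrm{i}/2,\mathrm{i},1/2)$, and the determinant $1/2+1/2=1$ confirms that the corresponding $\phi$ is a genuine algebra automorphism of $S$. Direct substitution verifies that $\phi\circ\iota$ agrees with $\iota'$ on every generator $\Delta_{a,b}$, completing the proof. I do not anticipate a serious obstacle: the whole argument reduces to solving one small system of bilinear equations, reflecting the fact that both $\iota$ and $\iota'$ arise from $\mathrm{SL}_2(\bC)$-related coordinatizations of essentially the same invariant theory on $\mathrm{Mat}_{2,n}(\bC)$.
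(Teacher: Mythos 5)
Your proof is correct. The paper itself does not prove Proposition~\ref{prop:AinS2}: both it and Proposition~\ref{prop:AinS1} are simply cited to~\cite[Proposition 12.13]{FomZel2003}. By contrast, you give a self-contained derivation of the second realization from the first, by exhibiting $\iota'$ as $\phi\circ\iota$ for an $\mathrm{SL}_2(\bC)$-induced automorphism $\phi$ of $S$ acting diagonally on the pairs $(z_{1,i},z_{2,i})$. The calculation checks out: with $(a,b,c,d)=(1,\mathrm{i}/2,\mathrm{i},1/2)$ one has $ad-bc=1$, which forces $\phi$ to preserve each $2\times2$ minor $z_{1,i}z_{2,j}-z_{1,j}z_{2,i}$ and in particular to be invertible, while $a^2+c^2=b^2+d^2=0$ and $ab+cd=\mathrm{i}$ give exactly $\phi(z_{1,i}z_{1,j}+z_{2,i}z_{2,j})=\mathrm{i}(z_{1,i}z_{2,j}+z_{1,j}z_{2,i})$ for all $i\le j$, the case $i=j$ included. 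Well-definedness and injectivity of $\iota'$ then follow at once from those of $\iota$ and invertibility of $\phi$. Beyond verifying the cited statement, your argument clarifies why the two coordinatizations of $\cA$ are interchangeable: they differ by a single $\mathrm{SL}_2$-change of frame on the pair of row vectors of the matrix $Z$, which is invisible to the $\mathrm{SO}_2$-invariant theory underlying the construction. The one small caveat, which you already flag, is that your argument is conditional on Proposition~\ref{prop:AinS1}, which is itself imported from~\cite{FomZel2003}; but since the paper treats that as given, this is entirely appropriate here.
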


Consider the matrix $Z'$ with columns labeled by $N$ and rows labeled by $\{1,2\}$ such that $Z'_{1,i}=z_{1,i}$ and $Z'_{2,i}=z_{2,i}$, while $Z'_{1,\ol i}=-\mathrm iz_{1,i}$ and $Z'_{2,\ol i}=\mathrm iz_{2,i}$, for all $i\in[1,n]$.
The embedding $\iota'$ takes $\Delta_{a,b}$ to the minor in $Z'$ spanned by columns $a$ and $b$.

An \textit{axially symmetric dihedral ordering} (or \textit{ASDO}) is a dihedral ordering of the form $\la(\ph)$ for an axially symmetric labeling $\ph$ of $P$. There are $2^{n-2}n!$ distinct ASDOs. Every ASDO $\la$ also determines a sign pattern $\nu(\la)\in\{1,-1\}^{D^*}$: if $\la$ is defined by the tuple $(\alpha_a)_{a\in N}$ of points in $\bP^1$, then
\begin{equation*}\label{eq:nuasdo}
\nu(\la)_{a,b}=\sgn(\alpha_a,\alpha_{a^\pp};\alpha_{b^\pp},\alpha_b)=\sgn(\alpha_{\ol a},\alpha_{\ol a^\pp};\alpha_{\ol b^\pp},\alpha_{\ol b}).    
\end{equation*}
Again, for ASDOs $\la_1\neq \la_2$ we have $\nu(\la_1)\neq\nu(\la_2)$. 

Consider pairwise distinct points $(\alpha_i)_{i\in [1,n]}$ in $\bR\bP^1$ with $\alpha_i=(\alpha_{1,i}:\alpha_{2,i})$ such that $\alpha_i\neq(-\alpha_{1,j}:\alpha_{2,j})$ for $i,j\in[1,n]$. For $i\in[1,n]$, set $\alpha_{1,\ol i}=-\mathrm i\alpha_{1,i}$, $\alpha_{2,\ol i}=\mathrm i\alpha_{2,i}$ and $\alpha_{\ol i}=(\alpha_{1,\ol i}:\alpha_{2,\ol i})\in\bR\bP^1$. By Proposition~\ref{prop:AinS2}, we have 
\[\left(\begin{vsmallmatrix}\alpha_{1,a}&\alpha_{1,b}\\\alpha_{2,a}&\alpha_{2,b}\end{vsmallmatrix}\right)_{(a,b)\in D}\in \mr X\]
and 
\[((\alpha_a,\alpha_{a^\pp};\alpha_{b^\pp},\alpha_b))_{(a,b)\in D^*}\in\cM(\bR).\]
 The dihedral ordering defined by $(\alpha_a)_{a\in N}$ is an ASDO and any ASDO can be obtained from a tuple of this form. We obtain the following.
 
\begin{prop}\label{prop:ASpatterns}
Each of the $2^{n-2}n!$ pairwise distinct sign patterns of the form $\nu(\la)$ with $\la$ an ASDO occurs in $\cM$.
\end{prop}

Now, observe that no dihedral ordering is both a CSDO and an ASDO. Combining Propositions~\ref{prop:CSpatterns} and~\ref{prop:ASpatterns} with the mentioned result in~\cite{AHHL2021} that counts connected components of $\cM(\bR)$, we obtain a description of all occurring sign patterns.
\begin{theorem}\label{thm:signpatterns}
There are exactly $2^{n-2}(n+1)(n-1)!$ sign patterns that occur in $\cM$: those of the form $\nu(\la)$ with $\la$ a CSDO and those of the form $\nu(\mu)$ with $\mu$ an ASDO.
\end{theorem}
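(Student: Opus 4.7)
The plan is to combine the two existing constructions with a tight counting argument against the known number of connected components. By Propositions~\ref{prop:CSpatterns} and~\ref{prop:ASpatterns}, every CSDO $\la$ and every ASDO $\mu$ yields an occurring sign pattern $\nu(\la)$ or $\nu(\mu)$ in $\cM$. The task reduces to showing that these two families are disjoint, that within each family distinct dihedral orderings yield distinct sign patterns, and that together they exhaust all occurring sign patterns.

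Injectivity within each family was already noted just after the definitions of $\nu(\la)$ for CSDOs and ASDOs, but the cleanest way to justify it uniformly is to invoke Proposition~\ref{prop:dihedralcoords}: the coordinates $\nu(\la)_{a,b} = \sgn(\alpha_a,\alpha_{a^\pp};\alpha_{b^\pp},\alpha_b)$, ranging over pairs $(a,b)\in D^*$, already determine the dihedral ordering of the defining tuple $(\alpha_a)_{a\in N}$. In particular, if $\nu(\la_1) = \nu(\la_2)$ then $\la_1 = \la_2$, regardless of whether each is a CSDO or an ASDO. The disjointness of the two families then follows from the observation in the text that no dihedral ordering is simultaneously centrally and axially symmetric (which is clear combinatorially: a CSDO pairs $i$ with $\ol i$ at antipodal positions of the $2n$-cycle, while an ASDO places them symmetrically across an axis through two sides or two vertices, and these conditions cannot coexist for $n \ge 3$).

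Counting, we therefore exhibit exactly
\[
2^{n-2}(n-1)! + 2^{n-2}n! \;=\; 2^{n-2}(n+1)(n-1)!
\]
occurring sign patterns. For the matching upper bound, note that for each sign pattern $\nu$, the subset $\cM_\nu \subset \cM(\bR)$ is open and closed in the real part of $\cM$, and the $\cM_\nu$ partition $\cM(\bR)$. Hence the number of nonempty $\cM_\nu$ is at most the number of connected components of $\cM(\bR)$, which is $2^{n-2}(n+1)(n-1)!$ by~\cite[Proposition 7.6]{AHHL2021}. This forces equality and simultaneously shows that each nonempty $\cM_\nu$ is connected, so no further sign patterns can occur.

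The main obstacle is essentially combinatorial bookkeeping rather than a hard new argument: one must make sure that the CSDO/ASDO counts are correct and that Proposition~\ref{prop:dihedralcoords} is applicable in both settings (the definitions of $\nu(\la)$ for CSDOs and ASDOs are formally the same, so this is automatic). Once the disjointness of CSDOs and ASDOs is noted, the theorem follows purely from the inequality ``occurring sign patterns $\le$ connected components'' together with the lower bound supplied by Propositions~\ref{prop:CSpatterns} and~\ref{prop:ASpatterns}. As a bonus, the argument proves the stronger statement (asserted earlier in the introduction) that each occurring sign pattern defines a single connected component of $\cM(\bR)$.
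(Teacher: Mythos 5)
Your proposal is correct and takes essentially the same route as the paper: lower bound from Propositions~\ref{prop:CSpatterns} and~\ref{prop:ASpatterns} together with cross-family distinctness (which, as you rightly note, also flows from Proposition~\ref{prop:dihedralcoords}, since the sign pattern determines the underlying dihedral ordering), matched against the upper bound given by the known count of connected components of $\cM(\bR)$ from \cite[Proposition 7.6]{AHHL2021}. Your version is slightly more explicit than the paper's one-line argument about why $\nu(\la)\neq\nu(\mu)$ when $\la$ is a CSDO and $\mu$ an ASDO, but the logic is identical.
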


\subsection{Signed tropicalizations}\label{sec:signedtrops}

Our next goal is to describe the signed tropicalizations of $\cM$ corresponding to all of the occurring sign patterns, proving~\cite[Conjecture 7.6]{CoxMakh2025}. We recall the notion of centrally symmetric phylogenetic trees introduced in~\cite{CoxMakh2025}.

\begin{defn}\label{def:CSPT}
A subdivision $\Theta$ of $P$ is \textit{centrally symmetric} if with every diagonal $\delta$ it also contains the central reflection of $\delta$ in the center of $P$. A \textit{centrally symmetric phylogenetic tree} (or \textit{CSPT}) is a phylogenetic tree of the form $\cT_{\Theta,\ph}$ with $\Theta$ and $\ph$ centrally symmetric.
\end{defn}

Every CSPT is also an ASPT. Informally, that is since one may ``twist'' one of the two mutually symmetric halves of a CSPT to obtain an ASPT; for a precise argument see~\cite[Proposition 6.3]{CoxMakh2025}. In particular, if the subdivision $\Theta$ in Definition~\ref{def:CSPT} is a centrally symmetric triangulation, then $\cT_{\Theta,\ph}$ is a maximal ASPT of type (I).
Therefore, one may define CSPTs as those APSTs that can be obtained from a maximal ASPT of type (I) by a sequence of symmetric contractions. Consequently, those cones $C_{T,\vv}$ for which $(T,\vv)$ is a CSPT form a subfan in $\Trop X$ of pure dimension $2n-1$. The respective cones $C^*_{T,\vv}$ form a subfan in $\Trop\cM$.

\begin{defn}
A phylogenetic tree $(T,\vv)$ is \textit{compatible} with a dihedral ordering $\la$ if there is a subdivision $\Theta$ and a labeling $\ph$ of $P$ such that $(T,\vv)=\cT_{\Theta,\ph}$ and $\la=\la(\ph)$.
\end{defn}

Recall that the $(n-1)$-dimensional cyclohedron $\cW_n$ is a polytope whose faces are enumerated by centrally symmetric subdivisions of $P$, with inclusion of faces corresponding to coarsening of subdivisions. The $(n-1)$-dimensional associahedron $\cK_{n+1}$ is a polytope whose faces are enumerated by subdivisions of a convex $(n+2)$-gon. By~\cite[Proposition~5.2]{CoxMakh2025}, the faces of $\cK_{n+1}$ are also enumerated by axially symmetric subdivisions of $P$, with a similar characterization of face inclusions.

We may now state the main result of this subsection as follows.

\begin{theorem}\label{thm:signedtrops}
\hfill
\begin{enumerate}[label=(\alph*)]
\item For a CSDO $\la$, the signed tropicalization $\Trop_{\nu(\la)}\cM$ consists of those cones $C^*_{T,\vv}$ for which $(T,\vv)$ is a CSPT that is compatible with $\la$. The fan $\Trop_{\nu(\la)}\cM$ is combinatorially equivalent to the dual fan of $\cW_n$.
\item For an ASDO $\mu$, the signed tropicalization $\Trop_{\nu(\mu)}\cM$ consists of those cones $C^*_{T,\vv}$ for which $(T,\vv)$ is an ASPT that is compatible with $\mu$. The fan $\Trop_{\nu(\mu)}\cM$ is combinatorially equivalent to the dual fan of $\cK_{n+1}$.
\end{enumerate}
\end{theorem}

Before presenting the proof, we make the following observation.
\begin{lemma}\label{lem:phsimthetasim}
\hfill
\begin{enumerate}[label=(\alph*)]
\item If $\cT_{\Theta,\ph}$ is a CSPT and the labeling $\ph$ is centrally symmetric, then the subdivision $\Theta$ is also centrally symmetric.
\item If $\cT_{\Theta,\ph}$ is an ASPT and the labeling $\ph$ is axially symmetric, then the subdivision $\Theta$ is also axially symmetric.
\end{enumerate}
\end{lemma}
\begin{proof}
For part (a), suppose that the labeling $\ph$ is centrally symmetric and the subdivision $\Theta$ is not. Let $\delta\in\Theta$ be such that the central reflection of $\delta$ is not in $\Theta$. Let $\ph(a_1),\dots,\ph(a_k)$ be the sides of $P$ lying on one side of $\delta$, where $k\le n$. Let $\cT_{\Theta,\ph}=(T,\vv)$. The edge corresponding to $\delta$ in $T$ has the property that $\vv(a_1),\dots,\vv(a_k)$ lie on one side of this edge, while all other $\vv(a)$ lie on the other. However, $T$ contains no edge that would similarly separate $\vv(\ol{a_1}),\dots,\vv(\ol{a_k})$ from the other leaves, which implies that $(T,\vv)$ is not a CSPT.
The argument for part (b) is similar.
\end{proof}

\begin{proof}[Proof of Theorem~\ref{thm:signedtrops}]
By Corollary~\ref{cor:fansWinv} the action of the group $W$ on $\bR^D$ preserves $\Trop I$. In particular, it preserves the lineality space $L$ and, therefore, descends to a $W$-action on $\bR^{D^*}$ that preserves $\Trop\cM$. 
The group $W$ also acts naturally on the set of dihedral orderings, respecting both central symmetry and axial symmetry. On one hand, for $\la$ either a CSDO or an ASDO and $\tau\in W$, we have $\Trop_{\nu(\tau\la)}\cM=\tau(\Trop_{\nu(\la)}\cM)$. On the other, a CSPT (resp.\ ASPT) $(T,\vv)$ is compatible with the CSDO (resp.\ ASDO) $\la$  if and only if $(T,\vv\circ\tau^{-1})$ is compatible with $\tau\la$. This means that both parts of the theorem can be verified for a single dihedral ordering.

We start with part (a), specifically, we consider the CSDO $\la_0$ obtained by ordering $N$ according to $\prec$. The corresponding sign pattern is $\nu(\la_0)=(1,\dots,1)$ and $\Trop_{\nu(\la_0)}\cM$ is the positive tropicalization $\Trop_{>0}\cM$. In view of~\eqref{eq:yab}, for any $w\in\bR^D$, the ideal $\init_{\rho(w)} \mr J$ contains no elements of $\bR_{>0}[y_{a,b}]_{(a,b)\in D^*}$ if and only if $\init_{w} I$ contains no elements of $\bR_{>0}[x_{a,b}]_{(a,b)\in D}$. First, we show that if $w\in C_{T,\vv}$ and $(T,\vv)$ is a CSPT compatible with $\la_0$, then $w$ lies in $\Trop_{>0} I$. We may assume that $(T,\vv)$ is maximal. Then, for every $a\prec b\prec c\prec d$, the initial form $\init_w r_{a,b,c,d}$ is a binomial that contains the term $-x_{a,c}x_{b,d}$ (due to compatibility with $\la_0$). These binomials generate $\init_w I$, hence $(1,\dots,1)\in\bC^D$ lies in the zero set of $\init_w I$. This implies that $\init_w I\cap\bR_{>0}[x_{a,b}]_{(a,b)\in D}=\varnothing$. 

For the converse, we consider an ASPT $(T,\vv)$ that is not compatible with $\la_0$. This is equivalent to the existence of elements $a\prec b\prec c\prec d$ and an edge $e$ in $T$, such that $\vv(a)$ and $\vv(c)$ lie in one component of $T\bs e$ while $\vv(b)$ and $\vv(d)$ lie in the other. For such elements $a,b,c,d$, edge $e$ and $w\in C_{T,\vv}$, we must have $\init_w r_{a,b,c,d}=x_{a,b}x_{c,d}+x_{a,d}x_{b,c}$. Consequently, $w$ does not lie in $\Trop_{>0} I$.

It remains to note that by Lemma~\ref{lem:phsimthetasim}(a), CSPTs that are compatible with $\la_0$ are enumerated by centrally symmetric subdivisions of $P$.
Hence, so are the cones of $\Trop_{>0}\cM$, with inclusion of cones corresponding to refinement of subdivisions. We deduce that $\Trop_{>0}\cM$ is combinatorially equivalent to the dual fan of $\cW_n$. 

For part (b), we consider the ASDO $\mu_0=\la(\ph_0)$, which is also obtained by ordering $N$ according to $\dotprec$. The sign pattern $\nu(\mu_0)$ satisfies $\nu(\mu_0)_{n,\ol n}=-1$ while all other $\nu(\mu_0)_{a,b}=1$. Let $(T,\vv)$ be a maximal ASPT compatible with $\mu_0$, choose $w\in C_{T,\vv}$. We are in the setting of Lemma~\ref{lem:initinker}, in particular, the point $(c_{T,\vv}(a,b))_{a,b}\in\bC^D$ lies in the zero set of $\init_w I$. Consequently, the point $(p_{a,b})_{(a,b)\in D^*}\in(\bC^*)^{D^*}$ with
\[p_{a,b}=\frac{c_{T,\vv}(a,b^\pp)c_{T,\vv}(a^\pp,b)}{c_{T,\vv}(a,b)c_{T,\vv}(a^\pp,b^\pp)}\]
lies in the zero set of $\init_{\rho(w)}\mr J$. The definition of $c_{T,\vv}(a,b)$ shows that all $p_{a,b}$ are real and $p_{a,b}>0$ if and only if $(a,b)\neq(n,\ol n)$. We conclude that $\rho(w)$ lies in $\Trop_{\nu(\mu_0)}\cM$.

It remains to show that $\rho(w)\notin|\Trop_{\nu(\mu_0)}\cM|$ for $w\in C_{T,\vv}$ such that $(T,\vv)$ is not compatible with $\mu_0$. This means that there are elements $a\dotprec b\dotprec c\dotprec d$ and an edge $e$ in $T$ such that $\vv(a)$ and $\vv(c)$ lie in one component of $T\bs e$ while $\vv(b)$ and $\vv(d)$ lie in the other. Considering $(\ol a,\ol b,\ol c,\ol d)$ instead if necessary, we may assume that $a,b\in[1,n]$. Thus, 
\[\init_w r_{a,b,c,d}=x_{a,b}x_{c,d}\pm x_{a,d}x_{b,c},\]
where the second coefficient is negative if and only if $a\prec b\prec d\prec c$, i.e., $c\notin[1,n]$. First, suppose that $c\in[1,n]$. In this case, $r^*_{a,b,c,d}=x_{a,c}^{-1}x_{b,d}^{-1}r_{a,b,c,d}$. In view of~\eqref{eq:extendedueq}, we have
\[\init_{\rho(w)}r^*_{a,b,c,d}=x_{a,c}^{-1}x_{b,d}^{-1}\init_w r_{a,b,c,d}=M_{[a,b),[c,d)}+M_{[b,c),[d,a)}\in\init_{\rho(w)}\mr J.\] 
Note that if $d\in[1,n]$, then $n,\ol n\in[d,a)$. If $d\notin[1,n]$, then $n\in[c,d)$ while $\ol n\in[d,a)$. Both of these conditions imply that $y_{n,\ol n}$ does not appear in $M_{[a,b),[c,d)}+M_{[b,c),[d,a)}$. Hence, the automorphism $\epsilon_{\nu(\mu_0)}$ which substitutes $y_{n,\ol n}$ with $-y_{n,\ol n}$ (Definition~\ref{def:signedtrop}) fixes $\init_{\rho(w)}r^*_{a,b,c,d}=M_{[a,b),[c,d)}+M_{[b,c),[d,a)}$. In particular, $\epsilon_{\nu(\mu_0)}(\init_{\rho(w)}r^*_{a,b,c,d})$ has positive coefficients, which implies that $\rho(w)\notin|\Trop_{\nu(\mu_0)}\cM|$. 

Suppose that $c\notin[1,n]$. Then, $r^*_{a,b,d,c}=x_{a,d}^{-1}x_{b,c}^{-1}r_{a,b,d,c}$ and
\[\init_{\rho(w)}r^*_{a,b,d,c}=x_{a,d}^{-1}x_{b,c}^{-1}\init_w r_{a,b,d,c}=M_{[b,d),[c,a)}-1\in\init_{\rho(w)}\mr J.\] 
In this case, $n\in[b,d)$ and $\ol n\in[c,a)$, hence $y_{n,\ol n}$ appears in degree 1 in $M_{[b,d),[c,a)}$. Therefore, $\epsilon_{\nu(\mu_0)}(-\init_{\rho(w)}r^*_{a,b,c,d})=M_{[b,d),[c,a)}+1$, which again implies $\rho(w)\notin|\Trop_{\nu(\mu_0)}\cM|$.

Finally, by Lemma~\ref{lem:phsimthetasim}(b), ASPTs that are compatible with $\mu_0$ are enumerated by axially symmetric subdivisions of $P$. This implies that $\Trop_{\nu(\mu_0)}\cM$ is combinatorially equivalent to the dual fan of $\cK_{n+1}$. 
\end{proof}

\begin{remark}
In the above proof, we have established that, modulo its lineality space, $\Trop_{>0} X$ is combinatorially equivalent to the dual fan of $\cW_n$. This is a special case of~\cite[Conjecture 8.1]{SpeWil2005}, which was proved in~\cite{AHHL2021,JaLoSt2021}. 
\end{remark}

We can now apply the results in this section to describe sign patterns and signed tropicalizations of the cluster variety $X$. For a dihedral ordering $\la$, let $\Omega_\la$ denote the subfan of $\Trop X$ formed by all $C_{T,\vv}$ such that $(T,\vv)$ is compatible with $\la$. If $\la$ is a CSDO, then, by Theorem~\ref{thm:signedtrops}(a), $\Omega_\la$ is the preimage of $\Trop_{\nu(\la)}\cM$ under the linear surjection $\rho:\Trop X\to\Trop M$ defined by~\eqref{eq:fanprojection}. In particular, $\Omega_\la$ is combinatorially equivalent to the product of the dual fan of $\cW_n$ with $\bR^n$.

\needspace{2\baselineskip}
\begin{cor}\label{cor:Xsigns}
\hfill
\begin{enumerate}[label=(\alph*)]
\item There are exactly $2^{2n-1}(n-1)!$ sign patterns that occur in $X$. For every such sign pattern $\nu$, the set $X_\nu$ (see~\eqref{eq:Xnu}) is connected.
\item For every sign pattern $\nu$ occurring in $X$, there is a CSDO $\la$ such that $\Trop_\nu X=\Omega_\la$. For a CSDO $\la$, this equality holds for exactly $2^{n+1}$ sign patterns $\nu$.
\end{enumerate}
\end{cor}

\begin{proof}
With a matrix $m\in\bM=\mathrm{Mat}_{2,n}(\bC)$, we associate the $\{1,2\}\times N$ matrix
\[B(m)=
\begin{pmatrix}
m_{1,1} & \dots & m_{1,n} & -m_{2,1} & \dots & -m_{2,n}\\
m_{2,1} & \dots & m_{2,n} & m_{1,1} & \dots & m_{1,n} 
\end{pmatrix},\]
whose columns are ordered according to $\prec$. Consider the map $f\colon\bM\to\bC^D$ given by 
\[f(m)=\left(
\begin{vmatrix}
B(m)_{1,a} & B(m)_{1,b}\\
B(m)_{2,a} & B(m)_{2,b}
\end{vmatrix}
\right)_{(a,b)\in D}.\]
Proposition~\ref{prop:AinS1} states that $f(\bM)$ is a dense subset of $X$.
Note that if $o\in\operatorname{SO}_2(\bC)$, then $f(om)=f(m)$ for all $m$. Moreover, as explained in~\cite[Example 12.12]{FomZel2003}, $X$ is the GIT quotient $\bM//\operatorname{SO}_2(\bC)$ with $f$ the quotient map. Thus, $f(\bM)=X$.

Consider $p\in \mr X$ and $m\in f^{-1}(p)$. Since $p_{1,\ol 1}=m_{1,1}^2+m_{2,1}^2\neq 0$, we may find $o\in\operatorname{SO}_2(\bC)$ such that $o\left(\begin{smallmatrix}m_{1,1}\\m_{2,1}\end{smallmatrix}\right)=\left(\begin{smallmatrix}0\\c\end{smallmatrix}\right)$ for some $c\in\bC$. Hence, there exists $m'\in f^{-1}(p)$ with $m'_{1,1}=0$. 

For $m\in f^{-1}(\mr X)$, the columns of $B(m)$ define a tuple of pairwise distinct points $\alpha(m)\in(\bC\bP^1)^N$.
Now, choose $p\in\mr X(\bR)$ and $m\in f^{-1}(p)$ with $m_{1,1}=0$. Since $p_{1,\ol 1}=m_{2,1}^2$, we see that $m_{2,1}$ is either real or imaginary (and nonzero). Since $p_{1,i}=-m_{2,1}m_{1,i}$ and $p_{1,\ol i}=m_{2,1}m_{2,i}$ for $i\in[1,n]$, we deduce that all elements of $m$ other than $m_{1,1}$ differ from $m_{2,1}$ by a nonzero real factor. Since $f(m)=f(-m)$, we assume that $m_{1,2}$ is either in $\bR_{>0}$ or in $\mathrm i\bR_{>0}$. Note that $\alpha(m)_a\in\bR\bP^1$ for all $a\in N$, and the tuple $\alpha(m)$ defines a dihedral ordering $\la$ which is a CSDO.

We consider a second point $p'\in\mr X(\bR)$ and, again, choose $m'\in f^{-1}(p')$ with $m'_{1,1}=0$ and $m'_{1,2}\in \bR_{>0}\cup\mathrm i\bR_{>0}$. We define the CSDO $\lambda'$ similarly. First, suppose that all elements of $m$ and $m'$ are real, $\sgn m_{2,i}=\sgn m'_{2,i}$ for all $i\in[1,n]$ and, moreover, $\lambda=\lambda'$. We show that, in this case, $p$ and $p'$ lie in the same connected component of $\mr X(\bR)$. 

The conditions $m_{1,2},m'_{1,2}>0$ and $\sgn m_{2,2}=\sgn m'_{2,2}$ ensure that $\alpha(m)_2$ and $\alpha(m')_2$ lie in the same connected component of $\bR\bP^1\bs\{0,\infty\}$. In view of $\la=\la'$, the same applies to $\alpha(m)_a$ and $\alpha(m')_a$ for any $a\in N\bs\{1,\ol 1\}$. Thus, we may find a continuous path $(\alpha(m_s))_{0\le s\le1}$ in $(\bC\bP^1)^N$  such that:
\begin{itemize}
\item $\alpha(m_0)=\alpha(m)$,
\item the elements of $\alpha(m_s)$ are pairwise distinct for every $s$, 
\item $\alpha(m_s)_1=(0:1)$ for all $s$,
\item $\alpha(m_1)=\alpha(m')$.
\end{itemize}
In fact, we may assume that $(m_s)_{2,i}=m_{2,i}$ for all $s$ and $i$, while the elements $(m_s)_{1,i}, i>1$ vary continuously within $\bR$ without changing sign. Thus, $(m_s)_{0\le s\le 1}$ is a continuous path in the subspace of real matrices $\bM(\bR)$ with $m_0=m$. Applying $f$ to the latter path, we obtain a continuous path in $\mr X(\bR)$ between $f(m)=p$ and $f(m_1)$.

Since $\sgn{(m_s)_{2,i}}=\sgn m_{2,i}=\sgn m'_{2,i}$ for all $s$ and $i$, we can obtain $m'$ from $m_1$ by scaling each column by a positive real number. Hence, there is a path between $f(m_1)$ and $f(m')=p'$ in $\mr X(\bR)$ and, subsequently, between $p$ and $p'$.

A similar argument shows that $p$ and $p'$ lie in the same connected component if all elements of $m$ and $m'$ are imaginary, $\sgn(m_{2,i}/\mathrm i)=\sgn(m'_{2,i}/\mathrm i)$ for all $i\in[1,n]$ and $\lambda=\lambda'$. In each of the two cases, we have $2^n$ choices of signs in the second row and $2^{n-2}(n-1)!$ choices of the CSDO $\la=\la'$. Thus, $\mr X(\bR)$ has no more than $2^{2n-1}(n-1)!$ connected components and at most as many occurring sign patterns.

Denote the quotient map $\mr X\to\cM$ by $\om$, in coordinates it is given by~\eqref{eq:yab}. In~\eqref{eq:csdopreimage}, given a CSDO $\la$, we defined a point $p\in\mr X$ mapped to $\cM_{\nu(\la)}$ by $\om$. In this construction one may assume that all $\alpha_{1,i}$ and $\alpha_{2,i}$ are real, which entails $p\in\mr X(\bR)$. 
Hence, given a CSDO $\la$, we may consider a point $p\in \om^{-1}(\cM_{\nu(\la)})\cap\mr X(\bR)$. Consider the subgroup $G\subset H$ generated by $\{-1,1\}^n\subset(\bC^*)^n$, $(\mathrm i,\dots,\mathrm i)\in(\bC^*)^n$ and $\eta$. Since $(1,\dots,1)$ and $(-1,\dots,-1)$ act trivially, $|G|=2^{n+1}$. Note that for a non-identity element $g\in G$, the point $gp$ is obtained from $p$ by reversing the signs of some nonempty set of coordinates and, furthermore, $\om(gp)=p$. Since $H$ acts freely, we have $|Gp|=2^{n+1}$ and we conclude that at least $2^{n+1}$ distinct sign patterns occur in $\om^{-1}(\cM_{\nu(\la)})$. Furthermore, the sign pattern of $\om(p)$ is determined by that of $p$, hence no sign pattern can occur in both $\om^{-1}(\cM_{\nu(\la)})$ and $\om^{-1}(\cM_{\nu(\la')})$ for $\la\neq\la'$. Thus, there are at least $2^{2n-1}(n-1)!$ distinct sign patterns in $\mr X(\bR)$ and at least as many connected components. Combining with the previously obtained upper bounds, we deduce part (a).

The proof of part (b) is similar to that of Theorem~\ref{thm:TropM}. Consider a CSDO $\la$ and let $\tilde\nu$ be one of the $2^{n+1}$ sign patterns occurring in $\om^{-1}(\cM_{\nu(\la)})$. We are to check that 
\[\rho(\Trop_{\tilde\nu} X)=\Trop_{\nu(\la)}\cM,\] 
i.e., that $\init_w\epsilon_{\tilde\nu}(I)$ contains no elements of $\bR_{>0}[x_{a,b}]_{(a,b)\in D}$ if and only if $\init_{\rho(w)}\epsilon_{\nu(\la)}(\mr J)$ contains no elements of $\bR_{>0}[y^{\pm1}_{a,b}]_{(a,b)\in D^*}$. 
Note that $\nu(\la)_{a,b}=\frac{\tilde\nu_{a^\pp,b}\tilde\nu_{a,b^\pp}}{\tilde\nu_{a,b}\tilde\nu_{a^\pp,b^\pp}}$ and, hence, the restriction of $\epsilon_{\tilde\nu}$ to $\mr R^H$ coincides with $\epsilon_{\nu(\la)}$. If $\init_{\rho(w)}\epsilon_{\nu(\la)}(\mr p)\in\bR_{>0}[y^{\pm1}_{a,b}]_{(a,b)\in D^*}$ for $\mr p\in\mr J$ and a monomial $M\in R$ is such that $M\mr p\in I$, then 
\[\init_w\epsilon_{\tilde\nu}(M\mr p)=\epsilon_{\tilde\nu}(M)\init_{\rho(w)}\epsilon_{\nu(\la)}(\mr p)\in\pm\bR_{>0}[x_{a,b}]_{(a,b)\in D}.\] 
Conversely, if $\init_w\epsilon_{\tilde\nu}(p)\in\bR_{>0}[x_{a,b}]_{(a,b)\in D}$ for $H$-homogeneous $p\in I$, then, for a monomial $M$ occurring in $p$, we have $p/M\in\mr J$ and 
\[\init_{\rho(w)}\epsilon_{\nu(\la)}(p/M)=\epsilon_{\tilde\nu}(M)^{-1}\init_w\epsilon_{\tilde\nu}(p)\in\pm\bR_{>0}[y^{\pm1}_{a,b}]_{(a,b)\in D^*}.\qedhere\] 
\end{proof}

\section{Proof of Lemma~\ref{lem:keylemma}}\label{sec:keylemmaproof}

The proof will be by induction and, to that end, we consider phylogentic trees labeled by subsets of $N$. For nonempty $K\subset N$, a \textit{$K$-labeled phylogenetic tree} $(T,\vv)$ consists of an abstract tree $T$ with $|K|$ leaves and no vertices of degree 2, and a bijection $\vv$ from $K$ to the set of leaf vertices of $T$. 

Let $P_k$ denote a regular $k$-gon. Consider $K\subset N$ with $|K|\ge3$, a subdivision of $P_{|K|}$ formed by the set of diagonals $\Theta$, and a bijection $\ph$ from $K$ to the set of sides of $P_{|K|}$ (a \textit{$K$-labeling}). As before, this data defines a $K$-labeled phylogenetic tree $\cT_{\Theta,\ph}$.

For even $k\ge 4$, we distinguish one longest diagonal of $P_k$, denoted by $\delta_{0,k}$. A subdivision of $P_k$ is axially symmetric if with every diagonal it also contains its mirror image with respect to $\delta_{0,k}$. A subset $K\subset N$ is symmetric if with every $a$ it also contains $\ol a$. A $K$-labeling $\ph$ of $P_{|K|}$ is axially symmetric if $K$ is symmetric and for every $a\in K$ the sides $\ph(a)$ and $\ph(\ol a)$ are symmetric to each other with respect to $\delta_{0,|K|}$. 

\begin{defn}
A $K$-labeled phylogenetic tree $(T,\vv)$ is a \textit{$K$-labeled ASPT} if $K$ is symmetric and either 
\begin{itemize}
\item $|K|=2$ or 
\item $(T,\vv)=\cT_{\Theta,\ph}$ for an axially symmetric subdivision $\Theta$ and an axially symmetric labeling $\ph$ of $P_{|K|}$.  
\end{itemize}
\end{defn}

A $K$-labeled ASPT $(T,\vv)$ is again equipped with a unique involution of $T$ that swaps $\vv(a)$ and $\vv(\ol a)$: its symmetry.

We also consider \textit{$K$-labeled weighted phylogenetic trees}: triples $(T,\vv,l)$ with $(T,\vv)$ a $K$-labeled phylogenetic tree and $l$ a real-valued function on the edges of $T$ that is positive on non-leaf edges. As before, for a $K$-labeled weighted phylogenetic tree $(T,\vv,l)$ we obtain a function $d_{T,\vv,l}\colon K^2\to\bR$. A $K$-labeled weighted phylogenetic tree $(T,\vv,l)$ is a \textit{$K$-labeled ASWPT} if $(T,\vv)$ is a $K$-labeled ASPT and $l$ is invariant under the symmetry.


\begin{proof}[Proof of Lemma~\ref{lem:keylemma}]
Consider a nonempty symmetric $K\subset N$. We prove by induction on $|K|$ that a $K$-labeled weighted phylogenetic tree $(T,\vv,l)$, such that (i) holds for every $a,b\in K$ and (ii) holds for every $i,j,k\in K\cap[1,n]$, is a $K$-labeled ASWPT. 

The base $|K|=2$ is trivial.
We proceed to the induction step. Suppose that $|K|\ge 4$ and we have verified our claim for all symmetric $K'$ with $|K'|<|K|$.
If $T$ has only one non-leaf vertex, property (i) implies that $l(e_a)+l(e_b)=l(e_{\ol a})+l(e_{\ol b})$ for any $a,b\in K$. This, in turn, means that $l(e_a)=l(e_{\ol a})$ for all $a\in K$, which implies that $(T,\vv,l)$ is a $K$-labeled ASWPT.

We assume that $T$ has at least two non-leaf vertices. Our first goal is to reduce to the case when there is a pair $a_0,b_0\in K$ with $|a_0|\neq|b_0|$ such that $\vv(a_0)$ and $\vv(b_0)$ have a common neighbor. 
In view of our assumption, there exist two distinct vertices $u$ and $v$, each of which has only one non-leaf neighbor and at least two leaf neighbors. We suppose that the only leaves adjacent to $u$ are $\vv(a)$ and $\vv(\ol a)$ for some $a\in K$, and the only leaves adjacent to $v$ are $\vv(b)$ and $\vv(\ol b)$; otherwise, we have found the desired pair. For $c\in K$, denote the edge incident to $\vv(c)$ by $e_c$. Let $d_{u,v}$ be the (weighted) distance between $u$ and $v$. By property (i), we have 
\[l(e_a)+d_{u,v}+l(e_b)=l(e_{\ol{a}})+d_{u,v}+l(e_{\ol b})\quad\text{and}\quad l(e_a)+d_{u,v}+l(e_{\ol b})=l(e_{\ol{a}})+d_{u,v}+l(e_{b}).\]
This implies that $l(e_a)=l(e_{\ol a})$ and $l(e_b)=l(e_{\ol b})$. Consider the set $V=\vpath(u,v)$ of vertices lying on the path between $u$ and $v$. First, suppose that every $v'\in V$ only has neighbors that are either leaves or also lie in $V$. This possibility is depicted in Figure~\ref{fig:uvtwocases}A. In this case, $T$ is a caterpillar tree consisting entirely of $V$ and the leaves adjacent to vertices in $V$. Furthermore, for every $c\in K$ the leaves $\vv(c)$ and $\vv(\ol c)$ are adjacent to the same vertex in $V$, otherwise we would have a contradiction with property (i): $\mathbf{d}(c,a)\neq \mathbf{d}(\ol c,\ol a)$, where we denote $\mathbf{d}=d_{T,\vv,l}$ for brevity. To show that $(T,\vv,l)$ is a $K$-labeled ASWPT, it remains to check that $l(e_c)=l(e_{\ol c})$ for every $c\in K$. This is implied by the equalities $\mathbf{d}(c,a)=\mathbf{d}(\ol c,\ol a)$, $\mathbf{d}(c,\ol a)=\mathbf{d}(\ol c,a)$ and $l(e_a)=l(e_{\ol a})$.

\begin{figure*}[h!tbp]
\begin{subfigure}[t]{.45\textwidth}
\centering
\begin{tikzpicture}[x=7mm, y=7mm]
\node[circle, fill=black, inner sep=2] (u) at (0,0) {};
\node[circle, fill=black, inner sep=2] (v) at (5,0) {};
\node[circle, fill=black, inner sep=2] (a) at (-.7,.7) {};
\node[circle, fill=black, inner sep=2] (ola) at (-.7,-.7) {};
\node[circle, fill=black, inner sep=2] (b) at (5.7,.7) {};
\node[circle, fill=black, inner sep=2] (olb) at (5.7,-.7) {};
\node[circle, fill=black, inner sep=2] (v1) at (1.5,0) {};
\node[circle, fill=black, inner sep=2] (v2) at (3.5,0) {};
\node[circle, fill=black, inner sep=2] (c) at (1.5,1) {};
\node[circle, fill=black, inner sep=2] (olc) at (1.5,-1) {};
\node[circle, fill=black, inner sep=2] (d) at (3.5,1) {};
\node[circle, fill=black, inner sep=2] (old) at (3.5,-1) {};
\node[circle, fill=black, inner sep=2] (e) at (4.2,.7) {};
\node[circle, fill=black, inner sep=2] (ole) at (4.2,-.7) {};

\node at (-1.4,.7) {$\vv(a)$};
\node at (-1.4,-.7) {$\vv(\ol a)$};
\node at (0.1,0.4) {$u$};
\node at (4.9,0.4) {$v$};
\node at (6.4,.7) {$\vv(b)$};
\node at (6.4,-.7) {$\vv(\ol b)$};
\node at (0.9,1.3) {$\vv(c)$};
\node at (0.9,-1.3) {$\vv(\ol c)$};

\draw (a) -- (u) -- (v1) -- (c)
      (ola) -- (u)
      (v1) -- (olc)
      (b) -- (v) -- (v2) -- (d)
      (olb) -- (v)
      (e) -- (v2) --(ole)
      (v2) -- (old);
\draw[dashed] (v1) -- (v2);
\end{tikzpicture}
\caption{All vertices outside of $V$ are leaves.}
\end{subfigure}
~
\begin{subfigure}[t]{.45\textwidth}
\centering
\begin{tikzpicture}[x=8mm, y=8mm]
\node[circle, fill=black, inner sep=2] (u) at (0,0) {};
\node[circle, fill=black, inner sep=2] (v) at (3.5,0) {};
\node[circle, fill=black, inner sep=2] (a) at (-.7,.7) {};
\node[circle, fill=black, inner sep=2] (ola) at (-.7,-.7) {};
\node[circle, fill=black, inner sep=2] (b) at (4.2,.7) {};
\node[circle, fill=black, inner sep=2] (olb) at (4.2,-.7) {};
\node[circle, fill=black, inner sep=2] (v1) at (1.5,0) {};
\node[circle, fill=black, inner sep=2] (v2) at (1.5,1) {};
\node[circle, fill=black, inner sep=2] (v3) at (3,1.5) {};
\node[circle, fill=black, inner sep=2] (c) at (2.3,2.2) {};
\node[circle, fill=black, inner sep=2] (d) at (3.7,2.2) {};

\node at (-1.3,.7) {$\vv(a)$};
\node at (-1.3,-.7) {$\vv(\ol a)$};
\node at (0.1,.3) {$u$};
\node at (3.4,-.3) {$v$};
\node at (4.8,.7) {$\vv(b)$};
\node at (4.8,-.7) {$\vv(\ol b)$};
\node at (1.5,-.4) {$v_1$};
\node at (1.1,1.1) {$v_2$};
\node at (3.1,1.1) {$v_3$};
\node at (1.7,2.2) {$\vv(c)$};
\node at (4.35,2.2) {$\vv(d)$};

\draw (a) -- (u) -- (ola)
      (b) -- (v) -- (olb)
      (v1) -- (v2)
      (v3) -- (c)
      (v3) -- (d);
\draw[dashed] (u) -- (v1)
              (v1) -- (v)
              (v2) to[in=-140, out=40] (v3);
\end{tikzpicture}
\caption{Some $v_1\in V$ has a non-leaf neighbor $v_2\notin V$. Here $v_2$ and $v_3$ may coincide.}
\end{subfigure}
\caption{}
\label{fig:uvtwocases}
\vspace{-1mm}
\end{figure*}

Now, suppose that some vertex $v_1\in V$ has a non-leaf neighbor $v_2\notin V$, this possibility is shown in Figure~\ref{fig:uvtwocases}B. If we delete the edge between $v_1$ and $v_2$, the component containing $v_2$ will contain two leaves $\vv(c)$ and $\vv(d)$ that have a common neighbor $v_3$. If $c\neq\ol d$ then $c,d$ is the desired pair. If $c=\ol d$, then, similarly to the above, property (i) provides $l(e_c)=l(e_{\ol c})$. Denote by $d_a$ the distance between $v_1$ and $u$, by $d_b$ the distance between $v_1$ and $v$ and by $d_c$ the distance between $v_1$ and $v_3$; all three distances are positive. Set $\{i,j,k\}=\{|a|,|b|,|c|\}$ and consider the four quantities in property (ii). The first three quantities are, in some order, equal to $2(d_a+d_b+l(e_a)+l(e_b)+l(e_c))$, $2(d_a+d_c+l(e_a)+l(e_b)+l(e_c))$ and $2(d_b+d_c+l(e_a)+l(e_b)+l(e_c))$. Meanwhile, the fourth quantity is equal to $2(d_a+d_b+d_c+l(e_a)+l(e_b)+l(e_c))$ and is strictly greater than the other three, contradicting property (ii). 

Thus, we may assume that $T$ has leaves $\vv(a_0)$ and $\vv(b_0)$ with $|a_0|\neq|b_0|$ that have a common neighbor $v_0$. We claim that $\vv(\ol{a_0})$ and $\vv(\ol{b_0})$ also have a common neighbor, which we denote $\ol v_0$. Indeed, two arbitrary leaves $\vv(a')$ and $\vv(b')$ have a common neighbor if and only if the difference $\mathbf{d}(a',c')-\mathbf{d}(b',c')$ is the same for all $c'\notin\{a',b'\}$. If this condition is satisfied when $a'=a_0$ and $b'=b_0$, it must also be satisfied when $a'=\ol{a_0}$ and $b'=\ol{b_0}$ by property (i). In addition, by the argument already used above, property (i) implies that $l(e_{a_0})=l(e_{\ol{a_0}})$ and $l(e_{b_0})=l(e_{\ol{b_0}})$.

Furthermore, $v_0$ has degree 3 if and only if $\ol v_0$ has degree 3. Indeed, $v_0$ has degree 4 or more if and only if there is a pair $c,d\in K\bs\{a_0,b_0\}$ such that $v_0\in\vpath(\vv(c),\vv(d))$, i.e.,
\[\mathbf{d}(c,d)=\mathbf{d}(c,a_0)+\mathbf{d}(d,a_0)-2l(e_{a_0}).\]
Meanwhile, $\ol v_0$ has degree 4 or more if and only if there are $c,d\in K\bs\{\ol{a_0},\ol{b_0}\}$ such that 
\[\mathbf{d}(\ol c,\ol d)=\mathbf{d}(\ol c,\ol{a_0})+\mathbf{d}(\ol d,\ol{a_0})-2l(e_{\ol{a_0}}).\]
By property (i), the two conditions are equivalent.

Set $K'=K\bs\{a_0,\ol{a_0}\}$. Let $T'$ be the subtree of $T$ obtained by deleting $\vv(a_0)$ and $\vv(\ol{a_0})$, while $\vv'$ and $l'$ are the restrictions of $\vv$ and $l$ to, respectively, $K\bs\{a,\ol a\}$ and the edge set of $T'$. If $v_0$ and $\ol v_0$ have degree 4 or more in $T$ (in particular, if $v_0=\ol v_0$), then $(T',\vv',l')$ is a $K'$-labeled weighted phylogenetic tree. However, we have $d_{T',\vv',l'}(c,d)=\mathbf{d}(c,d)$ for any $c,d\in K'$. Therefore, properties (i) and (ii) hold for $(T',\vv',l')$ and, by the induction hypothesis, $(T',\vv',l')$ is a $K'$-labeled ASWPT. If $\sigma$ denotes its symmetry, then $\sigma(\vv'(b_0))=\vv'(\ol{b_0})$, hence $\sigma(v_0)=\ol v_0$. Now, $(T,\vv,l)$ is obtained from $(T',\vv',l')$ by adding two leaves, one adjacent to $v_0$, the other to $\ol v_0$, labeling them by $a_0$ and $\ol{a_0}$, and assigning the same weight $l(e_{a_0})$ to the two new edges. Therefore, $(T,\vv,l)$ is a $K$-labeled ASWPT.

Suppose that $v_0$ and $\ol v_0$ have degree 3 in $T$. Then, in $T'$ these two vertices have degree 2. If $|K|=4$, the equalities $l(e_{a_0})=l(e_{\ol{a_0}})$ and $l(e_{b_0})=l(e_{\ol{b_0}})$ already ensure that $(T,\vv,l)$ is a $K$-labeled ASWPT. We assume $|K|\ge6$, in particular, $v_0$ and $\ol v_0$ are non-adjacent. We consider the tree $T''$ obtained from $T'$ by replacing each vertex of degree 2 together with its incident edges by a single edge (i.e., \textit{smoothing} the tree). The leaves of $T''$ are in natural bijection with the leaves of $T'$, we define $\vv''$ by composing $\vv'$ with this bijection. For every edge $e$ of $T''$, other than the two edges created by smoothing, we set $l''(e)$ equal to the weight of the respective edge in $T'$ and $T$. Finally, if, say, smoothing replaced edges $e_1$ and $e_2$ with $e$, we set $l''(e)=l'(e_1)+l'(e_2)=l(e_1)+l(e_2)$. We obtain a $K'$-labeled phylogenetic tree $(T'',\vv'',l'')$. For any $c,d\in K'$ we have $d_{T'',\vv'',l''}(c,d)=\mathbf{d}(c,d)$ and, by the induction hypothesis, $(T'',\vv'',l'')$ is a $K'$-labeled ASWPT. To obtain $(T,\vv,l)$ from $(T'',\vv'',l'')$, one subdivides two mutually symmetric leaf edges into pairs of edges of the same two weights, and then attaches leaves $\vv(a_0)$ and $\vv(\ol{a_0})$ to the two created vertices by edges of weight $l(e_{a_0})$. Hence, $(T,\vv,l)$ is again a $K$-labeled ASWPT. 
\end{proof}

To conclude our discussion of Lemma~\ref{lem:keylemma}, we give the alternative criterion mentioned in Remark~\ref{rem:keylemma}; it is an immediate consequence of the lemma. Recall that a weighted phylogenetic tree is uniquely determined by the pairwise distances between its leaves.

\begin{cor}\label{cor:altcriterion}
Consider a weighted phylogenetic tree $(T,\vv,l)$. For a triple $i,j,k\in[1,n]$, let $\mathbf T_{i,j,k}$ denote the unique $\{i,j,k,\ol i,\ol j,\ol k\}$-labeled weighted phylogenetic tree, such that 
$d_{\mathbf T_{i,j,k}}(a,b)=d_{T,\vv,l}(a,b)$
for any $a,b\in\{i,j,k,\ol i,\ol j,\ol k\}$. Then, $(T,\vv,l)$ is axially symmetric if and only if every $\mathbf T_{i,j,k}$ is axially symmetric.
\end{cor}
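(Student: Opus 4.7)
The strategy is to reduce both directions of the corollary to Lemma~\ref{lem:keylemma}, using the fact that properties (i) and (ii) of that lemma are in fact \emph{necessary} for axial symmetry. I would first establish this necessity as a preliminary observation: property (i) is immediate from the fact that the symmetry $\sigma$ of an ASWPT preserves the weighting $l$ and therefore all pairwise distances; property (ii) can be verified by a short case analysis combining the ordinary four-point condition for suitable four-leaf subtrees of $(T,\vv,l)$ with the identifications $d_{T,\vv,l}(a,b)=d_{T,\vv,l}(\ol a,\ol b)$ (or, equivalently, by inspection of the ASPT forms as in the $|K|=6$ base of the proof of Lemma~\ref{lem:keylemma}).

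For the ``only if'' direction, assume $(T,\vv,l)$ is an ASWPT and fix a triple $i,j,k\in[1,n]$. Set $K=\{i,j,k,\ol i,\ol j,\ol k\}$. By the preliminary observation, the distances $d_{T,\vv,l}$ restricted to $K$ satisfy both (i) and (ii). Since the $K$-labeled weighted phylogenetic tree $\mathbf T_{i,j,k}$ is uniquely characterized by these very distances, it satisfies the two properties as well. The proof of Lemma~\ref{lem:keylemma} treats $K$-labeled trees for arbitrary symmetric $K\subset N$ and, in particular, its base case ($|K|=6$) gives directly that $\mathbf T_{i,j,k}$ is a $K$-labeled ASWPT, which is what is meant by axial symmetry here.

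For the ``if'' direction, assume that every $\mathbf T_{i,j,k}$ is axially symmetric. I would verify that $(T,\vv,l)$ satisfies the two hypotheses of Lemma~\ref{lem:keylemma} and then invoke that lemma. For property (i), given $a,b\in N$: if $|a|=|b|$ the equality $d_{T,\vv,l}(a,b)=d_{T,\vv,l}(\ol a,\ol b)$ is trivial (either both sides vanish or they are literally the same distance). Otherwise take $i=|a|$, $j=|b|$, and any $k\in[1,n]\setminus\{i,j\}$, which exists since $n\ge 3$. Then $a,b,\ol a,\ol b$ all belong to $\{i,j,k,\ol i,\ol j,\ol k\}$, the distances in $\mathbf T_{i,j,k}$ and $(T,\vv,l)$ coincide on this set, and property (i) of the axially symmetric $\mathbf T_{i,j,k}$ (a consequence of the preliminary observation applied to $\mathbf T_{i,j,k}$) yields the desired equality. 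For property (ii) at a triple $1\le i<j<k\le n$, the four quantities involve only distances among elements of $\{i,j,k,\ol i,\ol j,\ol k\}$, so the condition for $(T,\vv,l)$ is literally the condition for $\mathbf T_{i,j,k}$, which holds since $\mathbf T_{i,j,k}$ is an ASWPT.

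The one real obstacle is the preliminary observation that ASWPTs satisfy property (ii), since property (i) is automatic. I expect this to require checking, for each ASPT shape, that the fourth of the four quantities in (ii) never strictly exceeds all three of the others; this is essentially already covered by the forms-based analysis carried out in the $|K|=6$ base case of the proof of Lemma~\ref{lem:keylemma}, and can also be derived uniformly from the four-point condition applied to $\{\vv(i),\vv(j),\vv(k),\vv(\ol k)\}$ together with $\sigma$-invariance. Everything else in the argument is a direct application of Lemma~\ref{lem:keylemma}.
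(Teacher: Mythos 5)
Your proposal is correct and follows exactly the approach the paper intends: the paper's one-line ``proof'' simply declares the corollary an immediate consequence of Lemma~\ref{lem:keylemma}, with Remark~\ref{rem:keylemma} pointing out that (i) and (ii) are in fact \emph{necessary} for axial symmetry. You correctly identify that this necessity statement (what you call the ``preliminary observation'') is the only genuine input beyond the lemma, and your reduction of each direction --- restricting distances to $K=\{i,j,k,\ol i,\ol j,\ol k\}$ and invoking the $|K|=6$ base of the lemma's proof for the ``only if'' direction, and synthesizing (i) and (ii) for $(T,\vv,l)$ from the axial symmetry of the individual $\mathbf T_{i,j,k}$ for the ``if'' direction --- is sound. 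In particular, the choice of $k\ne i,j$ in verifying (i), using $n\ge 3$, and the observation that (ii) for $(T,\vv,l)$ involves only distances among $\{i,j,k,\ol i,\ol j,\ol k\}$, are both correct.

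One small caveat: your suggested ``uniform'' derivation of property (ii) for ASWPTs via a single four-point condition on $\{\vv(i),\vv(j),\vv(k),\vv(\ol k)\}$ combined with $\sigma$-invariance is not obviously sufficient as stated --- the quantity $Q_4=d(i,\ol j)+d(i,k)+d(j,\ol k)$ mixes cross-distances in a way that a single four-point condition does not directly control, and unlike $Q_1,Q_2,Q_3$ the quantity $Q_4$ is not permuted among the four under relabeling $\{i,j,k\}$, so one cannot reduce to a single configuration by symmetry. The forms-based verification (checking each of the seven $|K|=6$ ASPT shapes and, for form~4, each assignment of the pair labels to the three degree-3 vertices) does succeed: in every case $Q_4$ ties with the unique maximum among $Q_1,Q_2,Q_3$, so no quantity is strictly maximal. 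This is the argument you should commit to. Alternatively, for the ``only if'' direction you can avoid property (ii) entirely by noting that the minimal subtree of an ASWPT spanned by the six leaves, after smoothing degree-2 vertices, inherits the restricted symmetry $\sigma$ together with its weight-invariance, and by uniqueness of the metric tree this smoothed subtree \emph{is} $\mathbf T_{i,j,k}$; this slightly shortens that half of the corollary, but property (ii) for ASWPTs is still needed for the ``if'' direction, so the case check cannot be avoided altogether.
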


\section{Proof of Lemma~\ref{lem:initinker}}\label{sec:initinkerproof}

We first prove an auxiliary statement and then deduce Lemma~\ref{lem:initinker} and Corollary~\ref{cor:gbasis}.

\begin{lemma}\label{lem:initforms}
Let $(T,\vv)$ be a maximal ASPT equal to $\cT_{\Theta,\ph_0}$ for an axially symmetric subdivision $\Theta$. Choose $w\in C_{T,\vv}$ and a quadruple  $a\dotprec b\dotprec c\dotprec d$ in $N$. The initial form $\init_{w}r_{a,b,c,d}$ contains the monomial $x_{a,c}x_{b,d}$ with a nonzero coefficient and, furthermore, lies in $\ker\psi_{T,\vv}$. 
\end{lemma}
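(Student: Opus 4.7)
Since $w \in C_{T,\vv}$, I can fix a weighting $l$ with $(T,\vv,l)$ an ASWPT and $w_{x,y} = d_{T,\vv,l}(x,y)$ for all $(x,y)\in D$. The labeling $\ph_0$ arranges the sides of $P$ in the cyclic order $1,2,\ldots,n,\ol n,\ldots,\ol 1$, which is the order $\dotprec$, so the four leaves $\vv(a),\vv(b),\vv(c),\vv(d)$ appear in this cyclic order along the planar tree $T$.

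\textbf{Identifying the initial form.} Consider the minimal subtree $T_{abcd}$ of the four leaves. Because every cell of $\Theta$ is either a triangle or an axially symmetric trapezoid, no vertex of $T$ is adjacent to three or more leaves, so $T_{abcd}$ has at most two branching vertices after smoothing: it is either an H-shape (two branching vertices joined by a nontrivial internal path in $T$) or a star. Planarity rules out the H-separation $\{a,c\}|\{b,d\}$, leaving the two options $\{a,b\}|\{c,d\}$ and $\{a,d\}|\{b,c\}$; the four-point condition then identifies the corresponding same-side pairing as the unique minimum and the other two pairings as equal maxima. In every scenario the alternating pair $(a,c)(b,d)$ is among the maxima (or among the three coinciding sums in the star case), so $x_{a,c}x_{b,d}$ appears in $\init_w r_{a,b,c,d}$ with nonzero coefficient $\pm 1$ determined by the $\prec$-ordering of $\{a,b,c,d\}$. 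This establishes the first claim.

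\textbf{Vanishing under $\psi_{T,\vv}$.} Any two pairings whose paths cross the internal path of $T_{abcd}$ use the same multiset of edges of $T$ (external edges once each, internal edges twice each), so they share the same $t$-monomial under $\psi_{T,\vv}$; in the star case all three pairings share a single $t$-monomial. Consequently $\psi_{T,\vv}(\init_w r_{a,b,c,d})$ is a scalar multiple of one monomial in $Q_{T,\vv}$, and it remains to show that this scalar vanishes. I proceed by case analysis on $k := |\{a,b,c,d\}\cap[1,n]|\in\{0,\ldots,4\}$: in each case I first compute the $\prec$-ordering of $\{a,b,c,d\}$ to read off the signs of the monomials of $r_{a,b,c,d}$, and then I use Proposition~\ref{prop:maxASPTprops}(a) to decide which relevant pairs $(p,q)$ satisfy $c_{T,\vv}(p,q)=2$ (paths with exactly one $\sigma$-fixed vertex), versus $c_{T,\vv}(p,q)=1$ (otherwise mixed type) or $\mathrm i$ (same type). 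In the symmetric cases $k\in\{0,4\}$ every $c$-factor equals $\mathrm i$, all products $c\cdot c$ are $-1$, and the two maximal monomials (which carry opposite $r$-signs) automatically cancel. In the mixed cases $k\in\{1,2,3\}$ the required identities among the four relevant $c$-factors emerge by tracing the paths through the decomposition of $T$ into the $\sigma$-orbit subtree components $T_k$ used in the proof of Proposition~\ref{prop:maxASPTprops}.

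\textbf{Main obstacle.} The subtlest verification is in Case $k=2$ with $a,b\in[1,n]$ and $c,d\in\{\ol 1,\ldots,\ol n\}$. For the H-separation $\{a,b\}|\{c,d\}$ the two maximal monomials of $\init_w r_{a,b,c,d}$ carry opposite $r$-signs and cancellation reduces to the identity $c_{T,\vv}(a,c)c_{T,\vv}(b,d) = c_{T,\vv}(a,d)c_{T,\vv}(b,c)$; but for the H-separation $\{a,d\}|\{b,c\}$ the two maximal monomials carry the same $r$-sign, so cancellation additionally requires $c_{T,\vv}(a,b)c_{T,\vv}(c,d) + c_{T,\vv}(a,c)c_{T,\vv}(b,d) = 0$, which since $c_{T,\vv}(a,b)c_{T,\vv}(c,d)=\mathrm i^2=-1$ forces $c_{T,\vv}(a,c)=c_{T,\vv}(b,d)=1$. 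Deriving these identities requires a careful translation of the geometric H-separation condition into a statement about which of $\vv(a),\vv(b),\vv(c),\vv(d)$ lie in the same $\sigma$-orbit subtree $T_k$. The star subcase of $k=2$ (which does occur, for instance when $|a|=|b|$ or $|c|=|d|$) is handled by an analogous but more intricate accounting involving all three monomials of $r_{a,b,c,d}$ simultaneously, where the $\mathrm i$-factors from same-type pairs and the integer factors $1$ or $2$ from mixed-type pairs must combine to sum to zero.
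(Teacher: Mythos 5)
Your plan coincides with the paper's: determine the topology of the subtree $T_{a,b,c,d}$ (H-shape with two partitions whose paths cross, or star), observe that planarity of the realization $\cT_{\Theta,\ph_0}$ forces $(a,c)|(b,d)$ to be a crossing pair, then verify that $\psi_{T,\vv}$ kills the initial form by computing the scalar coefficients $c_{T,\vv}(\cdot,\cdot)$ and invoking Proposition~\ref{prop:maxASPTprops}. The first claim and the reduction to a scalar identity are argued adequately.

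However, the vanishing under $\psi_{T,\vv}$ is where all the work lies, and your proposal does not actually carry it out. You explicitly defer the essential steps: ``the required identities \dots\ emerge by tracing the paths,'' ``Deriving these identities requires a careful translation of the geometric H-separation condition,'' ``handled by an analogous but more intricate accounting.'' These are statements of intent, not arguments. In particular, to establish $c_{T,\vv}(a,c)=c_{T,\vv}(b,d)=1$ in the $b_1=b$ subcase, one needs to actually use Proposition~\ref{prop:maxASPTprops}(b) to see that both branch vertices $v_1,v_2$ of $T_{a,b,c,d}$ are $\sigma$-fixed (so every mixed path passes through two fixed vertices); and in the $b_1=d$ subcase one must analyze how the chain of $\sigma$-fixed vertices meets $\vpath(v_1,v_2)$ in three configurations. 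None of this is supplied. The star (degree-4) case is likewise waved at, and you do not observe (as the paper does, again via Proposition~\ref{prop:maxASPTprops}(a)) that the degree-4 vertex is $\sigma$-fixed and therefore the fourth branch must lead into $\sigma(T_k)$, forcing one of $b,d$ outside $[1,n]$ --- which in particular shows the star case never occurs when $k=4$, an implicit hypothesis in your $k\in\{0,4\}$ argument that is left unchecked.

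Two further inaccuracies: your reduction does not exploit the substitution $(a,b,c,d)\mapsto(\ol d,\ol c,\ol b,\ol a)$, so you keep five cases $k=0,\ldots,4$ rather than reducing to three; and your claimed examples for the star subcase of $k=2$ are wrong --- with $a\dotprec b\dotprec c\dotprec d$, the condition $|a|=|b|$ forces $b=\ol a$ hence $b,c,d\notin[1,n]$ and $k=1$, while $|c|=|d|$ forces $d=\ol c$ hence $a,b,c\in[1,n]$ and $k=3$. Taken together, the proposal identifies the correct strategy and the correct key identities but stops exactly where the proof becomes hard; it is an outline, not a proof.
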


\begin{proof}
Let $T_{a,b,c,d}$ denote the minimal subtree of $T$ containing the vertices $\vv(a)$, $\vv(b)$, $\vv(c)$, $\vv(d)$. This subtree has four leaves and either 
\begin{enumerate}[label=(\Alph*)]
\item two vertices of degree 3 or
\item one of vertex of degree 4,
\end{enumerate}
with all remaining vertices having degree 2. The structure of $T_{a,b,c,d}$ determines $\init_{w}r_{a,b,c,d}$: if (B) holds, then $\init_{w}r_{a,b,c,d}=r_{a,b,c,d}$; if (A) holds, $\init_{w}r_{a,b,c,d}$ is a binomial. More explicitly, for a partition $\{a,b,c,d\}=\{a',b'\}\sqcup\{b',d'\}$, the term $\pm x_{a',b'}x_{c',d'}$ is contained in $\init_{w}r_{a,b,c,d}$ if and only if $\vpath(\vv(a'),\vv(b'))$ and $\vpath(\vv(c'),\vv(d'))$ intersect. This is always true in case (B) and true for two out of three partitions in case (A). The realization $(T,\vv)=\cT_{\Theta,\ph_0}$ ensures that $\vpath(\vv(a),\vv(c))$ and $\vpath(\vv(b),\vv(d))$ intersect, confirming the first claim. We prove that $\psi_{T,\vv}(\init_{w}r_{a,b,c,d})=0$. We assume that $a,b\in[1,n]$, otherwise we can replace $(a,b,c,d)$ with $(\ol d,\ol c,\ol b,\ol a)$.

First, suppose that $T_{a,b,c,d}$ has a vertex of degree 4, i.e., $\init_{w}r_{a,b,c,d}=r_{a,b,c,d}$. 
In this case, $(T,\vv)$ must have type (II) and the vertex $v$ of degree 4 must be fixed by the symmetry $\sigma$. As seen in the proof of Proposition~\ref{prop:maxASPTprops}, two of the neighbors of $v$ are also $\sigma$-fixed, while the remaining two are exchanged by $\sigma$. Furthermore, one of $\vpath(\vv(a),\vv(c))$ and $\vpath(\vv(b),\vv(d))$ contains both $\sigma$-fixed neighbors, while the other contains the remaining two neighbors. We assume that $\vpath(\vv(a),\vv(c))$ contains the $\sigma$-fixed neighbors of $v$, the other case is similar.
Since $\vpath(\vv(b),\vv(d))$ contains exactly one $\sigma$-fixed vertex, $d\notin[1,n]$ by Proposition~\ref{prop:maxASPTprops}(a). 
If $c\in[1,n]$, then $a\prec b\prec c\prec d$. We have $c_{T,\vv}(a,b)=c_{T,\vv}(a,c)=c_{T,\vv}(b,c)=\mathrm i$, $c_{T,\vv}(a,d)=c_{T,\vv}(c,d)=1$ and $c_{T,\vv}(b,d)=2$. Consequently,
\[\psi_{T,\vv}(r_{a,b,c,d})=(c_{T,\vv}(a,b)c_{T,\vv}(c,d)+c_{T,\vv}(a,d)c_{T,\vv}(b,c)-c_{T,\vv}(a,c)c_{T,\vv}(b,d))\prod_{e\text{ in }T_{a,b,c,d}}t_e=0.\]
If $c\notin[1,n]$, then $a\prec b\prec d\prec c$. In this case, $c_{T,\vv}(a,b)=c_{T,\vv}(c,d)=\mathrm i$, $c_{T,\vv}(a,c)=c_{T,\vv}(a,d)=c_{T,\vv}(b,c)=1$ and $c_{T,\vv}(b,d)=2$. Hence,
\[\psi_{T,\vv}(r_{a,b,c,d})=(c_{T,\vv}(a,b)c_{T,\vv}(c,d)+c_{T,\vv}(a,c)c_{T,\vv}(b,d)-c_{T,\vv}(a,d)c_{T,\vv}(b,c))\prod_{e\text{ in }T_{a,b,c,d}}t_e=0.\]


Now, suppose that $T_{a,b,c,d}$ contains two vertices of degree 3. This means that there are precisely two partitions 
$\{a,b,c,d\}=\{a',b'\}\sqcup\{c',d'\}$
for which $\vpath(\vv(a'),\vv(b'))$ and $\vpath(\vv(c'),\vv(d'))$ intersect. One of these partitions is $\{a,c\}\sqcup\{b,d\}$, denote the other by $\{a_1,b_1\}\sqcup\{c_1,d_1\}$. We may assume that $a_1=a$, so that $b_1=b$ or $b_1=d$. We have 
\[\init_{w} r_{a,b,c,d}=\pm x_{a,c}x_{b,d}\pm x_{a_1,b_1}x_{c_1,d_1}.\]

Observe that $\psi_{T,\vv}(x_{a,c}x_{b,d})$ and $\psi_{T,\vv}(x_{a_1,b_1}x_{c_1,d_1})$ differ only by a scalar factor---we check that the total coefficient is zero. If $d\in[1,n]$, then 
\begin{equation}\label{eq:initbinomial}
a\prec b\prec c\prec d\quad\text{and}\quad\init_{w} r_{a,b,c,d}=x_{a_1,b_1}x_{c_1,d_1}-x_{a,c}x_{b,d}.    
\end{equation}
In this case, $c_{T,\vv}(a_1,b_1)=c_{T,\vv}(c_1,d_1)=c_{T,\vv}(a,c)=c_{T,\vv}(b,d)=\mathrm i$, and the total coefficient is indeed zero.

If $c\in[1,n]$ but $d\notin[1,n]$, then~\eqref{eq:initbinomial} applies again. We assume that $b_1=b$, the case $b_1=d$ is similar. We have $c_{T,\vv}(a,c)=c_{T,\vv}(a,b)=\mathrm i$, while $c_{T,\vv}(b,d)$ and $c_{T,\vv}(c,d)$ can equal either 1 or 2. We are to check that $c_{T,\vv}(b,d)=c_{T,\vv}(c,d)$. 
If $\vpath(\vv(b),\vv(c))$ contains no $\sigma$-fixed vertices (Figure~\ref{fig:dnotin1n}A), then a $\sigma$-fixed vertex lies in $\vpath(\vv(b),\vv(d))$ if and only if it lies in $\vpath(\vv(c),\vv(d))$, implying $c_{T,\vv}(b,d)=c_{T,\vv}(c,d)$, as desired. 

If, however, $\vpath(\vv(b),\vv(c))$ contains a $\sigma$-fixed vertex, then $(T,\vv)$ has type (II). For $i\in\{a,b,c,d\}$ denote the unique $\sigma$-fixed vertex in $\vpath(\vv(i),\vv(\ol i))$ by $v_i$, see Figure~\ref{fig:dnotin1n}B. Since $\vpath(\vv(a),\vv(d))\cap\vpath(\vv(b),\vv(c))=\varnothing$ and the former path contains $v_d$, we have $v_d\notin\{v_b,v_c\}$. Consequently, both $\vpath(\vv(b),\vv(d))$ and $\vpath(\vv(c),\vv(d))$ contain at least two $\sigma$-fixed vertices, providing $c_{T,\vv}(b,d)=c_{T,\vv}(c,d)=1$.

\begin{figure*}[h!tbp]
\begin{subfigure}[t]{.45\textwidth}
\centering
\begin{tikzpicture}[x=10mm, y=10mm]
\node[circle, fill=black, inner sep=2] (v1) at (0,0) {};
\node[circle, fill=black, inner sep=2] (d) at (1.5,0) {};
\node[circle, fill=black, inner sep=2, red] (v2) at (-1,-.5) {};
\node[circle, fill=black, inner sep=2, red] (b) at (-2,0) {};
\node[circle, fill=black, inner sep=2, red] (c) at (-2,-1) {};
\node[circle, fill=black, inner sep=2] (a) at (-1,1.5) {};

\node at (2,0) {$\vv(d)$};
\node at (-1.5,1.5) {$\vv(a)$};
\node at (-2.5,0) {$\vv(b)$};
\node at (-2.5,-1) {$\vv(c)$};

\draw[dashed] (v1) to[in=-60, out=90] (a)
              (v1) to[in=-160, out=-20] (d)
              (v1) to[in=40, out=-140] (v2);
\draw[dashed, red] (v2) to[in=0, out=150] (b)
                   (v2) to[in=0, out=-150] (c);
\end{tikzpicture}
\caption{There are no $\sigma$-fixed vertices in the path between $\vv(b)$ and $\vv(c)$, highlighted in red.}
\end{subfigure}
~
\begin{subfigure}[t]{.45\textwidth}
\centering
\begin{tikzpicture}[x=10mm, y=10mm]
\node[circle, fill=black, inner sep=2] (vb) at (0,0) {};
\node[circle, fill=black, inner sep=2] (vc) at (0,-.75) {};
\node[circle, fill=black, inner sep=2] (d) at (1.5,.5) {};
\node[circle, fill=black, inner sep=2] (vd) at (0,1) {};
\node[circle, fill=black, inner sep=2] (b) at (-2,.5) {};
\node[circle, fill=black, inner sep=2] (c) at (-2,-1.25) {};
\node[circle, fill=black, inner sep=2] (a) at (-1.5,1.5) {};

\node at (2,.5) {$\vv(d)$};
\node at (-2,1.5) {$\vv(a)$};
\node at (-2.5,.5) {$\vv(b)$};
\node at (-2.5,-1.25) {$\vv(c)$};
\node at (.4,0) {$v_b$};
\node at (.4,-.75) {$v_c$};
\node at (-.4,1) {$v_d$};

\draw[dashed] (vb) -- (vc)
              (vc) -- (vd)
              (vb) to[in=-20, out=135] (b)
              (vc) to[in=20, out=-135] (c)
              (vd) to[in=120, out=0] (d)
              (vd) to[in=20, out=90] (a);
\end{tikzpicture}
\caption{In the contrary case, note that $v_a$ may lie anywhere above $v_b$ but its position does not affect our argument.}
\end{subfigure}
\caption{The case $a,b,c\in[1,n]$, $d\notin[1,n]$, assuming $b_1=b$.}
\label{fig:dnotin1n}
\vspace{-1mm}
\end{figure*}

Finally, if $c,d\notin[1,n]$, then $a\prec b\prec d\prec c$. Let $v_1$ and $v_2$ denote vertices of degree three in $T_{a,b,c,d}$. If $b_1=b$ (Figure~\ref{fig:сnotin1n}A), then
\[\init_{w} r_{a,b,c,d}=x_{a,c}x_{b,d}+x_{a,b}x_{c,d}\]
and $c_{T,\vv}(a,b)=c_{T,\vv}(c,d)=\mathrm i$.
Both $\vpath(\vv(a),\vv(b))$ and $\vpath(\vv(c),\vv(d))$ contain each of $v_1$ and $v_2$. By Proposition~\ref{prop:maxASPTprops}(b), both $v_1$ and $v_2$ are $\sigma$-fixed. Since $v_1$ and $v_2$ also lie in $\vpath(\vv(a),\vv(c))$ and $\vpath(\vv(b),\vv(d))$, we conclude that $c_{T,\vv}(a,c)=c_{T,\vv}(b,d)=1$, which again provides a zero total.

\begin{figure*}[h!tbp]
\begin{subfigure}[t]{.45\textwidth}
\centering
\begin{tikzpicture}[x=10mm, y=10mm]
\node[circle, fill=black, inner sep=2] (v1) at (0,1) {};
\node[circle, fill=black, inner sep=2] (v2) at (0,0) {};
\node[circle, fill=black, inner sep=2] (a) at (-1.5,1.5) {};
\node[circle, fill=black, inner sep=2] (b) at (-1.5,-.5) {};
\node[circle, fill=black, inner sep=2] (c) at (1.5,0) {};
\node[circle, fill=black, inner sep=2] (d) at (1.5,1) {};

\node at (-.4,1) {$v_1$};
\node at (-.4,0.1) {$v_2$};
\node at (-2,1.5) {$\vv(a)$};
\node at (-2,-.5) {$\vv(b)$};
\node at (2,0) {$\vv(c)$};
\node at (2,1) {$\vv(d)$};

\draw[dashed] (v1) to[in=-30, out=90] (a)
              (v1) to[in=-160, out=20] (d)
              (v1) -- (v2)
              (v2) to[in=0, out=-160] (b)
              (v2) to[in=-150, out=-30] (c);
\end{tikzpicture}
\caption{If $b_1=b$, then $v_1$ and $v_2$ are $\sigma$-fixed.}
\end{subfigure}
~
\begin{subfigure}[t]{.45\textwidth}
\centering
\begin{tikzpicture}[x=10mm, y=10mm]
\node[circle, fill=black, inner sep=2] (v1) at (0,0) {};
\node[circle, fill=black, inner sep=2] (v2) at (1,.5) {};
\node[circle, fill=black, inner sep=2] (a) at (-1.5,.5) {};
\node[circle, fill=black, inner sep=2] (b) at (-1.5,-.5) {};
\node[circle, fill=black, inner sep=2] (c) at (2.5,1) {};
\node[circle, fill=black, inner sep=2] (d) at (2.5,0) {};

\node at (.1,-.3) {$v_1$};
\node at (.8,.8) {$v_2$};
\node at (-2,.5) {$\vv(a)$};
\node at (-2,-.5) {$\vv(b)$};
\node at (3,1) {$\vv(d)$};
\node at (3,0) {$\vv(c)$};

\draw[dashed] (v1) to[in=-30, out=90] (a)
              (v1) to[in=10, out=-140] (b)
              (v1) to[in=-160, out=20] (v2)
              (v2) to[in=180, out=60] (c)
              (v2) to[in=-160, out=-80] (d);

\node at (0,-1) {};              
\end{tikzpicture}
\caption{If $b_1=d$, then $T_{a,b,c,d}$ can contain zero, one or multiple $\sigma$-fixed vertices.}
\end{subfigure}
\vspace{-1mm}
\caption{The case $a,b\in[1,n]$, $c,d\notin[1,n]$.}
\label{fig:сnotin1n}
\end{figure*}

If, however, $b_1=d$ (Figure~\ref{fig:сnotin1n}B), then 
\[\init_{w} r_{a,b,c,d}=x_{a,c}x_{b,d}-x_{a,d}x_{b,c}.\]
If $T_{a,b,c,d}$ contains no $\sigma$-fixed vertices, $c_{T,\vv}(a,c)=c_{T,\vv}(b,d)=c_{T,\vv}(a,d)=c_{T,\vv}(b,c)=1$. 
Let $T_{a,b,c,d}$ contain $\sigma$-fixed vertices. Then, $(T,\vv)$ has type II and the $\sigma$-fixed vertices in $T_{a,b,c,d}$ form a path subgraph that intersects all of $\vpath(\vv(a),\vv(c))$, $\vpath(\vv(a),\vv(d))$, $\vpath(\vv(b),\vv(c))$ and $\vpath(\vv(b),\vv(d))$. In particular, at least one $\sigma$-fixed vertex must lie in $\vpath(v_1,v_2)$.
If $T_{a,b,c,d}$ contains a single $\sigma$-fixed vertex, then all four coefficients are equal to 2. 
Suppose that $T_{a,b,c,d}$ contains multiple $\sigma$-fixed vertices. If at least two of them are in $\vpath(v_1,v_2)$, then all four coefficients are 1. If exactly one $\sigma$-fixed vertex lies in $\vpath(v_1,v_2)$, then it coincides with $v_1$ or $v_2$. If it is $v_1$, then $c_{T,\vv}(a,c)=c_{T,\vv}(a,d)$ and $c_{T,\vv}(b,c)=c_{T,\vv}(b,d)$. If it is $v_2$, then $c_{T,\vv}(a,c)=c_{T,\vv}(b,c)$ and $c_{T,\vv}(a,d)=c_{T,\vv}(b,d)$.  
\end{proof}

\begin{proof}[Proof of Lemma~\ref{lem:initinker}]
We recall the weight $w'\in\bR^D$ considered in the proof of Lemma~\ref{lem:ImonItor}. It can be defined by $w'_{a,b}=\ln(k+1)$ where $k$ is the number of elements strictly between $a$ and $b$ in the order $\dotprec $. For a quadruple $a\dotprec b\dotprec c\dotprec d$, we have $\init_{w'}r_{a,b,c,d}=\pm x_{a,c}x_{b,d}$.

Let $J\subset\init_w I$ be the ideal generated by the initial forms $\init_{w}r_{a,b,c,d}$. By Lemma~\ref{lem:initforms},
\[\init_{w'}(\init_{w}r_{a,b,c,d})=\init_{w'}r_{a,b,c,d}=\pm x_{a,c}x_{b,d}.\]
Hence, $\init_{w'} J$ contains the ideal $I'_\mon$ generated by all $x_{a,c}x_{b,d}$. Thus, $\grdim J\ge\grdim I'_\mon$, where the latter is equal to $\grdim I$ by Lemma~\ref{lem:ImonI}. However, $\grdim J\le\grdim I$ by construction. We conclude that $J=\init_{w} I$ and, in view of Lemma~\ref{lem:initforms}, $\init_w I\subset\ker\psi_{T,\vv}$.
\end{proof}

We have also shown that $\init_w I$ is generated by the $\init_w r_{a,b,c,d}$, obtaining Corollary~\ref{cor:gbasis}.

\bibliographystyle{plainurl}
\bibliography{refs.bib}

\end{document}